\newtheorem{thm}{Theorem}[section]
\newtheorem{lem}{Lemma}[section]
\newtheorem{coro}{Corollary}[section]
\newtheorem{rmk}{Remark}[section]
\newtheorem*{prf}{Proof}
\numberwithin{equation}{section}
\newtheorem{prop}[thm]{Proposition}
\begin{document}
\title{A linearly implicit and local energy-preserving scheme for the sine-Gordon equation based on the invariant energy quadratization approach}
\author{Chaolong Jiang$^1$, \quad Wenjun Cai$^2$, \quad Yushun Wang$^2$\footnote{Correspondence author. Email:
wangyushun@njnu.edu.cn.}\\
{\small $^1$ School of Statistics and Mathematics, }\\
{\small Yunnan University of Finance and Economics, Kunming 650221, China}\\
{\small $^2$ Jiangsu Provincial Key Laboratory for NSLSCS,}\\
{\small School of Mathematical Sciences,  Nanjing Normal University,}\\
{\small  Nanjing 210023, China}\\
}
\date{}
\maketitle

\begin{abstract}
In this paper, we develop a novel, linearly implicit and local energy-preserving scheme for the sine-Gordon equation. The basic idea is from the invariant energy quadratization approach to construct energy stable schemes for gradient systems, which are energy dispassion. We here take the sine-Gordon equation as an example to show that the invariant energy quadratization approach is also an efficient way to construct linearly implicit and local energy-conserving schemes for energy-conserving systems. Utilizing the invariant energy quadratization approach, the sine-Gordon equation is first reformulated into an equivalent system, which inherits a modified local energy conservation law. The new system are then discretized by the conventional finite difference method and a semi-discretized system is obtained, which can conserve the semi-discretized local energy conservation law. Subsequently, the linearly implicit structure-preserving method is applied for the resulting semi-discrete system to arrive at a fully discretized scheme. We prove that the resulting scheme can exactly preserve the discrete local energy conservation law. Moveover, with the aid of the classical energy method, an unconditional and optimal error estimate for the scheme is established in discrete $H_h^1$-norm. Finally, various numerical examples are addressed to confirm our theoretical analysis and
demonstrate the advantage of the new scheme over some existing local structure-preserving schemes. \\[2ex]
\textbf{AMS subject classification:} 65M12, 65M06\\[2ex]
\textbf{Keywords:} Linearly implicit, energy-preserving, invariant energy quadratization, sine-Gordon equation.
\end{abstract}

\section{Introduction}
In this paper, we consider the following sine-Gordon equation in two dimensions
\begin{align}\label{2SG:eq:1.1}
u_{tt}=\Delta u-\sin(u),\ \ (x,y)\in\Omega\subset\mathbb{R}^{2},\ 0<t\leq T,
\end{align}
with 
initial conditions
\begin{align*}
u(x,y,0)=f(x,y),\ u_t(x,y,0)=g(x,y),\ (x,y)\in\bar{\Omega},
\end{align*}
where $\Delta =\partial_{xx}+\partial_{yy}$, $\Omega=(x_L,x_R)\times(y_L,y_R)\subset\mathbb{R}^{2}$,
 and $f(x,y)$ and $g(x,y)$ are wave modes or kinks
and their velocity, respectively \cite{Josephson65}.


\begin{prop}\rm
The sine-Gordon equation \eqref{2SG:eq:1.1} admits the following local energy conservation law
\begin{align}\label{2SG:1.4}
\big(\frac{1}{2}v^2+\frac{1}{2}u_x^2+\frac{1}{2}u_y^2+(1-\cos(u))\big)_t-(u_xv)_x-(u_yv)_y=0.
\end{align}
\end{prop}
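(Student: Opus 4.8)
The plan is to treat this as a direct verification by differentiation and substitution of the governing equation. First I would make explicit the identification $v=u_t$, which is implicit in the statement (the term $\tfrac12 v^2$ is the kinetic energy density). Writing the energy density as $\mathcal{E}=\tfrac12 v^2+\tfrac12 u_x^2+\tfrac12 u_y^2+(1-\cos u)$, I would differentiate it in time by the chain rule, obtaining
\begin{align*}
\mathcal{E}_t = v\,v_t + u_x u_{xt}+u_y u_{yt}+\sin(u)\,u_t = u_t u_{tt}+u_x u_{xt}+u_y u_{yt}+\sin(u)\,u_t,
\end{align*}
where the second equality uses $v=u_t$ and $v_t=u_{tt}$.

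Next I would expand the two flux divergences by the product rule,
\begin{align*}
(u_x v)_x = u_{xx}u_t+u_x u_{tx},\qquad (u_y v)_y = u_{yy}u_t+u_y u_{ty},
\end{align*}
and form the combination $\mathcal{E}_t-(u_xv)_x-(u_yv)_y$. The mixed terms $u_x u_{xt}$ and $u_y u_{yt}$ cancel against $u_x u_{tx}$ and $u_y u_{ty}$ by equality of mixed partial derivatives, leaving
\begin{align*}
\mathcal{E}_t-(u_xv)_x-(u_yv)_y = u_t\big(u_{tt}-u_{xx}-u_{yy}+\sin(u)\big)=u_t\big(u_{tt}-\Delta u+\sin(u)\big).
\end{align*}

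Finally I would invoke the sine-Gordon equation \eqref{2SG:eq:1.1}, which states exactly that $u_{tt}-\Delta u+\sin(u)=0$, so the factor in parentheses vanishes pointwise and the claimed identity \eqref{2SG:1.4} follows. There is no genuine obstacle here; the argument is a routine computation rather than a deep one. The only points demanding care are the implicit identification $v=u_t$, the cancellation of the mixed second-order derivatives (which requires $u$ to be smooth enough for Clairaut's theorem), and tracking the sign of the $\sin(u)$ term so that it assembles correctly with $-\Delta u$ to reproduce the left-hand side of \eqref{2SG:eq:1.1}.
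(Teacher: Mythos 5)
Your proof is correct and is essentially the paper's own argument run in the opposite direction: the paper multiplies $v_t=\Delta u-\sin(u)$ by $v=u_t$ and regroups via the Leibniz rule into time derivatives and spatial divergences, while you differentiate the energy density, subtract the fluxes, and reduce the residual to $u_t\big(u_{tt}-\Delta u+\sin(u)\big)=0$. The underlying identities — $vv_t=\tfrac{1}{2}(v^2)_t$, $\sin(u)\,u_t=(1-\cos(u))_t$, and the mixed-partial cancellation encoded in $\Delta u\,v=(u_xv)_x+(u_yv)_y-\tfrac{1}{2}(u_x^2+u_y^2)_t$ — are the same in both.
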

\begin{proof}\rm
Eq. \eqref{2SG:eq:1.1} can be rewritten as
\begin{align}\label{2SG:eq:1:2}
  &u_t=v,\\\label{2SG:eq1:2}
  &v_t=\Delta u-\sin(u),
  \end{align}
Multiplying of \eqref{2SG:eq1:2} with $v$, we have
\begin{align}\label{LSP-SG:1-4}
v_tv-\Delta uv+\sin(u)v=0.
\end{align}
By the continuous Leibniz rule, together with \eqref{2SG:eq:1:2}, we have
\begin{align}\label{LSP-SG:1-5}
 &vv_t=\frac{1}{2}(v^2)_t,\ \sin(u)v=\sin(u)u_t=(1-\cos(u))_t,
 \end{align}
 and
 \begin{align}\label{LSP-SG:1-6}
&\Delta uv=(u_xv)_x+(u_yv)_y-u_xv_x-u_yv_y=(u_xv)_x+(u_yv)_y-\frac{1}{2}(u_x^2+u_y^2)_t.
\end{align}
From \eqref{LSP-SG:1-4}-\eqref{LSP-SG:1-6}, we can obtain the local energy conservation law \eqref{2SG:1.4}.
\end{proof}
Under suitable boundary conditions, such as periodic boundary conditions, integrating the energy conservation law \eqref{2SG:1.4} over the spatial domain leads to the following global energy conservation law
 \begin{align}\label{2SG:eq:1.3}
 \mathcal{H}(t)=\frac{1}{2}\iint_{\Omega}(v^2+u_x^2+u_y^2+2(1-\cos(u)))dxdy\equiv \mathcal{H}(0).
 \end{align}
%

Various numerical schemes were proposed and studied for solving the sine-Gordon equation, including finite difference methods \cite{Bratsos07b,CL81}, a finite element method \cite{Argyris91}, a meshless method \cite{DG10}, a split cosine method \cite{SKV10} and other
effective methods (e.g., see Refs. \cite{DS08,JPM12,LiuW17}). However, the mentioned numerical methods
cannot preserve the discrete analogue of the continuous energy conservation property of
the sine-Gordon equation.

In recent years, due to the superior properties in long time numerical computation
over traditional numerical methods, structure-preserving methods, which can preserve one or more intrinsic properties of the system exactly, become more and more powerful in scientific research. As the most important components of structure-preserving methods, symplectic and multi-symplectic methods, which can preserve symplectic and multisymplectic structures of Hamiltonian systems, respectively, have gained remarkable
success in numerical stimulations of the sine-Gordon equation (e.g., see Refs. \cite{AHS97,HJLL07,LSQ14,McLachlan94,Reich00,SW08,WWJQ03,ZTHTW10} and references therein). 
Besides the geometric structure, the sine-Gordon equation also possesses the energy conservation law.
It is well-known that the conservation of
energy is a crucial property of mechanical systems and plays an important role in the proof of stability, convergence and existence of solution for numerical methods (e.g., see Refs. \cite{LQ95,ZGL95}). Thus, how to design numerical schemes, which preserve rigorously a discrete energy of the sine-Gordon equation, attracts a lot of interest (e.g. see Refs. \cite{BCI15,BI16,CGM12,Furihata01,GPRV86,JSLW17,KFCG17,LQ95,ZV91} and references therein).
 However, most of existing energy-conserved schemes are fully implicit, which implies that one has to solve a system
  of nonlinear equations, at each time step. An explicit energy-preserving scheme is much easier for implementation, however, it is often conditionally stable (e.g., see Refs. \cite{GPRV86,ZV91}) so that it may require very small temporal step-size and suffer impractical computational
costs for long time computation. The golden middle strategy is to develop linearly implicit schemes.
At each time step, the linearly implicit scheme only requires to solve a linear system, which leads to considerably lower costs than the implicit one. In Ref. \cite{MF01}, Matsuo and Furihata proposed a new procedure for designing linearly implicit schemes for complex-valued nonlinear partial differential equations, which inherit energy conservation. Later on, Dahlby and Owren \cite{DO11} generalized the ideas and developed a general framework for deriving linearly implicit and energy-preserving schemes for partial differential equations (PDEs) with polynomial nonlinearities. Recently, Cai et al. \cite{CLW18} proposed the partitioned averaged vector field method, which provides an efficient procedure to develop linearly implicit and energy-preserving schemes for some special Hamiltonian PDEs. However, these procedures are invalid for conserving-systems with nonpolynomial nonlinearities, such as the sine-Gordon equation.

 In addition, most of existing linearly implicit schemes can only preserve the global energy which depends on the suitable boundary conditions, such as periodic boundary conditions, otherwise these schemes will be failed. To address this drawback, Wang et al. \cite{WWQ08} introduced the concept of local structure-preserving methods. The basic idea of the local structure-preserving methods is to generalize the preserved structures of the structure-preserving methods on the global time level to local area so that the structures can be preserved in any local areas or any points in time-space region. Therefore, the boundary conditions are not necessary any more. In recent years, the local structure-preserving method has been of high interest in studying Hamiltonian PDEs (e.g., see Refs. \cite{CWL13,GCW14b,LW15} and references therein). However, to the best of our knowledge, there has been no reference considering a linearly implicit scheme for energy-conserving systems, which can preserve the local energy conservation law. In this paper, following the idea of invariant energy quadratization (IEQ) approach, we develop a linearly implicit and local energy-preserving scheme for the sine-Gordon equation \eqref{2SG:eq:1.1}. Furthermore, based on the classical energy method, an unconditionally optimal error estimate for the proposed scheme in discrete $H_h^1$-norm is established.
 The IEQ approach was recently proposed by Yang and his collaborators \cite{YZW17,YZWS17,ZYGW17} to develop linearly implicit and energy stable schemes for gradient flows. To our knowledge, there has been no reference considering the IEQ approach for developing linearly implicit schemes for energy-conserving systems, which inherit the local energy conservation law, so is the error estimate for the scheme proposed by the IEQ approach. Taking the sine-Gordon equation \eqref{2SG:eq:1.1} for example, we first explore the feasibility of the IEQ approach and establish the first result on the error estimate of the scheme obtained by the IEQ approach without any restriction
on the mesh ratio.

The outline of this paper is
organized as follows. In Section \ref{2SG:Sec3}, the sine-Gordon equation \eqref{2SG:eq:1.1} is rewritten as an equivalent system based on the IEQ approach, which inherits a modified local energy conservation law. A semi-discrete system, which can exactly preserve the semi-discrete modified local energy conservation law, is then obtained by using the conventional finite difference method for the spatial discretization. Finally, we apply a linearly implicit structure-preserving method for the semi-discrete system to arrive at fully
discrete schemes, which are shown to preserve the discrete local energy conservation law exactly. The unique solvability of the solution and the convergence analysis of the proposed scheme are presented in Section \ref{2SG:Sec4}. Some numerical experiments are reported in Section \ref{2SG:Sec5}. We draw some conclusions in Section \ref{2SG:Sec6}.

\section{Construction of the linearly implicit and local energy-preserving scheme}\label{2SG:Sec3}
In this section, we propose a linearly implicit and local energy-preserving scheme for the sine-Gordon equation \eqref{2SG:eq:1.1}. Inspired by the IEQ approach, we first introduce a Lagrange multiplier (auxiliary variable) $r=\sqrt{2-\cos(u)}$, and rewrite \eqref{2SG:eq:1:2}-\eqref{2SG:eq1:2} as
\begin{align}\label{2SG:eq:2.4}
&\left\lbrace
  \begin{aligned}
  &u_t=v,\\
  &v_t=\Delta u-\frac{\sin(u)}{\sqrt{2-\cos(u)}}r,\\
  &r_t=\frac{\sin(u)}{2\sqrt{2-\cos(u)}}v.
  \end{aligned}\right.\ 
  \end{align}

\begin{thm}\rm
The system \eqref{2SG:eq:2.4} satisfies the following modified local energy conservation law
\begin{align}\label{2SG:eq:1-4}
\big(\frac{1}{2}v^2+\frac{1}{2}u_x^2+\frac{1}{2}u_y^2+r^2\big)_t-(u_xv)_x-(u_yv)_y=0.
\end{align}
\end{thm}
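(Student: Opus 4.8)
The plan is to mimic the proof of the continuous local energy conservation law \eqref{2SG:1.4}, replacing the nonlinear potential term $(1-\cos(u))_t$ by a contribution coming from the auxiliary variable $r$. First I would multiply the second equation of \eqref{2SG:eq:2.4} by $v$ to obtain
\begin{align*}
vv_t=v\Delta u-\frac{\sin(u)}{\sqrt{2-\cos(u)}}rv.
\end{align*}
Exactly as in \eqref{LSP-SG:1-5}--\eqref{LSP-SG:1-6}, I would use $vv_t=\frac{1}{2}(v^2)_t$ together with the relation $v=u_t$ (so that $v_x=(u_x)_t$ and $v_y=(u_y)_t$) and the Leibniz rule to rewrite the Laplacian term as
\begin{align*}
v\Delta u=(u_xv)_x+(u_yv)_y-u_xv_x-u_yv_y=(u_xv)_x+(u_yv)_y-\frac{1}{2}(u_x^2+u_y^2)_t.
\end{align*}
Substituting this back and moving the gradient-squared term to the left gives
\begin{align*}
\big(\tfrac{1}{2}v^2+\tfrac{1}{2}u_x^2+\tfrac{1}{2}u_y^2\big)_t-(u_xv)_x-(u_yv)_y=-\frac{\sin(u)}{\sqrt{2-\cos(u)}}rv.
\end{align*}

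The key step is to show that the remaining nonlinear term on the right is precisely $-(r^2)_t$, which is where the third equation of the IEQ reformulation enters. Using $r_t=\frac{\sin(u)}{2\sqrt{2-\cos(u)}}v$, I would compute
\begin{align*}
(r^2)_t=2rr_t=2r\cdot\frac{\sin(u)}{2\sqrt{2-\cos(u)}}v=\frac{\sin(u)}{\sqrt{2-\cos(u)}}rv,
\end{align*}
so the right-hand side above is exactly $-(r^2)_t$. Transferring it to the left then yields
\begin{align*}
\big(\tfrac{1}{2}v^2+\tfrac{1}{2}u_x^2+\tfrac{1}{2}u_y^2+r^2\big)_t-(u_xv)_x-(u_yv)_y=0,
\end{align*}
which is the claimed law \eqref{2SG:eq:1-4}.

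I do not expect any genuine analytic obstacle here: every manipulation is a routine application of the chain and Leibniz rules, and there are no boundary terms to control since the identity is purely local (pointwise in space-time). The one thing worth emphasizing, rather than an obstacle, is that the factor $\tfrac{1}{2}$ in the evolution equation for $r$ is not incidental; it is exactly what makes $2rr_t$ reproduce the coefficient $\frac{\sin(u)}{\sqrt{2-\cos(u)}}$ so that the nonlinear source generated by multiplying the $v$-equation by $v$ is cancelled by $(r^2)_t$. This cancellation is the whole point of the IEQ reformulation, and verifying it is the crux of the proof; the continuous potential $1-\cos(u)$ in \eqref{2SG:1.4} has effectively been quadratized into $r^2$.
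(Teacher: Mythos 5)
Your proposal is correct and follows essentially the same route as the paper's own proof: multiply the $v$-equation by $v$, reuse the identities \eqref{LSP-SG:1-5}--\eqref{LSP-SG:1-6} from the continuous case, and use the third equation of \eqref{2SG:eq:2.4} to identify $\frac{\sin(u)}{\sqrt{2-\cos(u)}}rv=2rr_t=(r^2)_t$, exactly as in \eqref{LSP-SG:2.67}. Your closing remark about the factor $\tfrac{1}{2}$ being precisely what makes the quadratized term cancel the nonlinear source correctly identifies the crux of the IEQ reformulation.
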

\begin{proof}\rm
Multiplying of the second equality of \eqref{2SG:eq:2.4} with $v$, we have
\begin{align}\label{LSP-SG:2.63}
v_tv-\Delta uv+\frac{\sin(u)}{\sqrt{2-\cos(u)}}rv=0.
\end{align}
With the third equality of \eqref{2SG:eq:2.4}, we can deduce
\begin{align}\label{LSP-SG:2.67}
\frac{\sin(u)}{\sqrt{2-\cos(u)}}rv=2rr_t=(r^2)_t.
\end{align}
From \eqref{LSP-SG:1-5}, \eqref{LSP-SG:1-6} and \eqref{LSP-SG:2.67}, we obtain
\begin{align*}
\big(\frac{1}{2}v^2+\frac{1}{2}u_x^2+\frac{1}{2}u_y^2+r^2\big)_t-(u_xv)_x-(u_yv)_y=0.
\end{align*}
This completes the proof.
\end{proof}
\begin{coro}\rm With the periodic boundary
conditions, the system \eqref{2SG:eq:2.4} possesses the modified global energy conservation law, as follows:
 \begin{align}\label{2SG:eq:2.6}
 \frac{d}{dt}E=0,\ E=\iint_{\Omega}\frac{1}{2}\Big(v^2+u_x^2+u_y^2+2r^2\Big)dxdy.
 \end{align}
\end{coro}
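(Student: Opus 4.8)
The plan is to obtain the global law \eqref{2SG:eq:2.6} by spatially integrating the modified local energy conservation law \eqref{2SG:eq:1-4}, which the preceding theorem has already established pointwise. Since that identity holds at every $(x,y)\in\Omega$, I would integrate it over the fixed rectangle $\Omega=(x_L,x_R)\times(y_L,y_R)$, splitting the integral into the time-derivative (energy-density) term and the two flux divergences $(u_xv)_x$ and $(u_yv)_y$.

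For the density term, because $\Omega$ is a fixed domain and the solution $(u,v,r)$ is assumed sufficiently smooth, I would pull the $t$-derivative outside the spatial integral by differentiation under the integral sign. This produces exactly $\frac{d}{dt}E$ with $E=\iint_\Omega \frac12\big(v^2+u_x^2+u_y^2+2r^2\big)\,dxdy$, matching the energy in \eqref{2SG:eq:2.6}.

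For the flux terms, I would apply the fundamental theorem of calculus in each coordinate separately, reducing them to boundary contributions,
\begin{align*}
\iint_\Omega (u_xv)_x\,dxdy=\int_{y_L}^{y_R}\big[u_xv\big]_{x=x_L}^{x=x_R}dy,\qquad
\iint_\Omega (u_yv)_y\,dxdy=\int_{x_L}^{x_R}\big[u_yv\big]_{y=y_L}^{y=y_R}dx.
\end{align*}
Under periodic boundary conditions, $u$, $v$ and their spatial derivatives take identical values on opposite faces of $\partial\Omega$, so each bracketed difference is zero and both boundary integrals vanish.

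Combining these observations yields $\frac{d}{dt}E=0$, which is precisely \eqref{2SG:eq:2.6}. There is no substantive obstacle here; the corollary is a direct consequence of the theorem, and the only points requiring care are the legitimacy of interchanging $\partial_t$ with the spatial integral (guaranteed by smoothness on the fixed rectangle) and the separate cancellation of the $x$- and $y$-flux boundary terms, both of which follow immediately from periodicity.
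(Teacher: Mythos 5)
Your proposal is correct and coincides with the paper's intended argument: the corollary is stated without a written proof precisely because, as the paper notes before Proposition 1.1, one simply integrates the local conservation law \eqref{2SG:eq:1-4} over $\Omega$, and the flux terms $(u_xv)_x$ and $(u_yv)_y$ contribute only boundary integrals that cancel under periodicity, leaving $\frac{d}{dt}E=0$ with the density $\frac{1}{2}v^2+\frac{1}{2}u_x^2+\frac{1}{2}u_y^2+r^2=\frac{1}{2}\big(v^2+u_x^2+u_y^2+2r^2\big)$ exactly as in \eqref{2SG:eq:2.6}.
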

Let $\Omega_{h}=\{(x_{j_1},y_{j_2})|x_{j_{1}}=x_L+j_{1}h_{1},y_{j_{2}}=y_L+j_{2}h_{2};\ 0\leq j_{r}\leq N_{r}-1, r=1,2\}$ be a partition of $\Omega$
with mesh sizes $h_1=(x_R-x_L)/N_{1}$ and $h_2=(y_R-y_L)/N_{2}$, respectively, where $N_{1}$ and $N_{2}$ are two even numbers. 
A discrete mesh function ${ u}_{j_1,j_2},\ (x_{j_1},y_{j_2})\in \Omega_{h}$ is said to satisfy the periodic boundary conditions if and only if
\begin{align}\label{SG-PBS}
&\left\lbrace
  \begin{aligned}
&x-\text{periodic}:\ u_{0,j_2}=u_{N_1,j_2},\ u_{-1,j_2}=u_{N_1-1,j_2},\ j_2=0,1,2\cdots,N_2-1,\\
&y-\text{periodic}:\ u_{j_1,0}=u_{j_1,N_2},\ u_{j_1,-1}=u_{j_1,N_2-1},\ j_1=0,1,2\cdots,N_1-1.
\end{aligned}\right.\
\end{align}
For a positive integer $M$, let $\Omega_{\tau}=\{t_{n}|t_{n}=n\tau; 0\leq n\leq M\}$
be a uniform partition of $[0,T]$ with time step $\tau=T/M$. Let $\Omega_{h\tau}=\Omega_{h}\times\Omega_{\tau}$ and denote
$u_{j_1,j_2}^n$ be a mesh function defined on $\Omega_{h\tau}$.
 For any grid function ${u}_{j_1,j_2}^n, (x_{j_1},x_{j_2},t_n)\in\Omega_{h\tau}$, we denote
\begin{align*}
&\delta_{t} {u}_{j_1,j_2}^{n}=\frac{u_{j_1,j_2}^{n+1}-u_{j_1,j_2}^{n}}{\tau},\ A_t{u}_{j_1,j_2}^{n}= \frac{{u}_{j_1,j_2}^{n+1}+{u}_{j_1,j_2}^{n}}{2},\
\hat{u}_{j_1,j_2}^{n+\frac{1}{2}}=\frac{3{u}_{j_1,j_2}^{n}-{u}_{j_1,j_2}^{n-1}}{2},\\
&\delta_{x} u_{j_1,j_2}^n=\frac{u_{j_{1}+1,j_{2}}^n-u_{j_{1},j_{2}}^n}{h_{1}},\
\delta_{y} u_{j_1,j_2}^n=\frac{u_{j_{1},j_{2}+1}^n-u_{j_{1},j_{2}}^n}{h_{2}}.
\end{align*}
Then, for any grid functions ${u}_{j_1,j_2}^n,{v}_{j_1,j_2}^n, (x_{j_1},x_{j_2},t_n)\in\Omega_{h\tau}$, we have \cite{WWQ08}:
\begin{enumerate}
\item Commutative law
\begin{align*}
\delta_t\delta_wu_{j_1,j_2}^n=\delta_w\delta_tu_{j_1,j_2}^n,\ A_t\delta_wu_{j_1,j_2}^n=\delta_wA_tu_{j_1,j_2}^n,\ \delta_x\delta_yu_{j_1,j_2}^n=\delta_y\delta_xu_{j_1,j_2}^n,
\end{align*}
for $w=x,y$.
\item Discrete Leibniz rule
\begin{enumerate}
\item $\delta_t(u_{j_1,j_2}^n v_{j_1,j_2}^n)=A_tu_{j_1,j_2}^{n}\delta_tv_{j_1,j_2}^n+\delta_tu_{j_1,j_2}^n A_tv_{j_1,j_2}^{n}$;
\item $\delta_t(u_{j_1,j_2}^{n} v_{j_1,j_2}^{n})=\delta_tu_{j_1,j_2}^{n}v_{j_1,j_2}^n+u_{j_1,j_2}^{n+1}\delta_tv_{j_1,j_2}^n$;
\item $\delta_t(u_{j_1,j_2}^{n}v_{j_1,j_2}^{n})=\delta_tu_{j_1,j_2}^{n} v_{j_1,j_2}^{n+1}+u_{j_1,j_2}^{n}\delta_tv_{j_1,j_2}^{n}$.
\end{enumerate}
\end{enumerate}
Notice that the discrete Leibniz rules are also valid for the operators $\delta_x$ and $\delta_y$.

For simplicity, let  $u_{j_1,j_2}^n=u(x_{j_1},y_{j_2},t_n)$, and $U_{j_1,j_2}^n$ be the numerical approximation of $ u(x_{j_1},y_{j_2},t_n)$ for $j_r=0,1,2,\cdots,N_r-1,r=1,2$ and $n=0,1,2,\cdots,M$.
%

\subsection{Local energy-preseving spatial semi-discretization}
Many energy-preserving schemes have been designed and investigated for solving the sine-Gordon equation in the literature, but,
 little attention was paid to the local energy-preserving properties brought by spatial discretizations.
In this section, the conventional finite difference method is applied for the spatial discretization and we prove that the resulting semi-discrete system can exactly preserve the semi-discrete local energy conservation law.

Applying the finite difference method to the system \eqref{2SG:eq:2.4} in space, we obtain the following semi-discrete system
\begin{align}\label{2SG:eq:2.7}
&\left\lbrace
  \begin{aligned}
  &\frac{d}{dt}U_{j_1,j_2}=V_{j_1,j_2},\\
  &\frac{d}{dt}V_{j_1,j_2}=\delta_x^2U_{j_1-1,j_2}+\delta_y^2U_{j_1,j_2-1}-\frac{\sin(U_{j_1,j_2})}{\sqrt{2-\cos(U_{j_1,j_2})}}R_{j_1,j_2},\\
  &\frac{d}{dt}R_{j_1,j_2}=\frac{\sin(U_{j_1,j_2})}{2\sqrt{2-\cos(U_{j_1,j_2})}}V_{j_1,j_2},\ 0\le j_r\le N_r-1,\ r=1,2,
  \end{aligned}\right.
  \end{align}
where $U_{j_1,j_2}, V_{j_1,j_2}$ and $R_{j_1,j_2}$ are the numerical approximations of $u(x_{j_1},y_{j_2},t),v(x_{j_1},y_{j_2},t)$ and $r(x_{j_1},y_{j_2},t)$ for  $j_r=0,1,2,\cdots,N_r-1,r=1,2$, respectively.
%
%
%
%
%
%
%

\begin{thm}\rm\label{2SG-lem2.1}  The semi-discrete system \eqref{2SG:eq:2.7} possesses the semi-discrete modified local energy conservation law
\begin{align}\label{LEP-C}
\frac{d}{dt}\big(\frac{1}{2}V_{j_1,j_2}^2&+\frac{1}{2}(\delta_xU_{j_1,j_2})^2+\frac{1}{2}(\delta_yU_{j_1,j_2})^2+R_{j_1,j_2}^2\big)\nonumber\\
&-\delta_x(\delta_xU_{j_1-1,j_2}V_{j_1,j_2})-\delta_y(\delta_yU_{j_1,j_2-1}V_{j_1,j_2})=0,
\end{align}
for $j_r=0,1,2,\cdots,N_r-1,\ r=1,2.$
\end{thm}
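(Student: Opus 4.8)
The plan is to mirror, line by line, the continuous proof of Theorem (the modified local energy law \eqref{2SG:eq:1-4}), replacing the continuous product rule and integration by parts with the discrete Leibniz rules and the commutative law collected earlier in the excerpt. First I would multiply the second equation of \eqref{2SG:eq:2.7} by $V_{j_1,j_2}$, obtaining
\begin{align*}
V_{j_1,j_2}\frac{d}{dt}V_{j_1,j_2}=\big(\delta_x^2U_{j_1-1,j_2}+\delta_y^2U_{j_1,j_2-1}\big)V_{j_1,j_2}-\frac{\sin(U_{j_1,j_2})}{\sqrt{2-\cos(U_{j_1,j_2})}}R_{j_1,j_2}V_{j_1,j_2}.
\end{align*}
The left-hand side collapses to $\tfrac12\frac{d}{dt}V_{j_1,j_2}^2$ by the ordinary product rule in $t$ (time is continuous in the semi-discrete setting). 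For the nonlinear term I would substitute the third equation of \eqref{2SG:eq:2.7} exactly as in \eqref{LSP-SG:2.67}, giving $\frac{\sin(U)}{\sqrt{2-\cos(U)}}RV=2R\,\frac{d}{dt}R=\frac{d}{dt}R^2$; this step is purely algebraic and carries over verbatim from the continuous case, since it involves no spatial derivative.

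The only genuinely discrete ingredient is the treatment of the discrete Laplacian, the analog of \eqref{LSP-SG:1-6}. Writing $\delta_x^2U_{j_1-1,j_2}=\delta_x(\delta_xU_{j_1-1,j_2})$ and applying the discrete Leibniz rule (b) for $\delta_x$ (valid since the paper states the rules hold for $\delta_x,\delta_y$) with the shift $W_{j_1+1,j_2}=\delta_xU_{j_1,j_2}$, I would get
\begin{align*}
\delta_x^2U_{j_1-1,j_2}\,V_{j_1,j_2}=\delta_x\big(\delta_xU_{j_1-1,j_2}\,V_{j_1,j_2}\big)-\delta_xU_{j_1,j_2}\,\delta_xV_{j_1,j_2},
\end{align*}
and symmetrically in the $y$-direction. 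I would then convert the cross term by the commutative law: since $V_{j_1,j_2}=\frac{d}{dt}U_{j_1,j_2}$ from the first equation, $\delta_xV_{j_1,j_2}=\delta_x\frac{d}{dt}U_{j_1,j_2}=\frac{d}{dt}\delta_xU_{j_1,j_2}$, so that $\delta_xU_{j_1,j_2}\,\delta_xV_{j_1,j_2}=\tfrac12\frac{d}{dt}(\delta_xU_{j_1,j_2})^2$, and likewise for $y$. Assembling these pieces, substituting back, and grouping all total-time-derivative terms together yields precisely \eqref{LEP-C}.

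The main obstacle I anticipate is bookkeeping of the index shifts in the discrete Leibniz step: the rule (b) for $\delta_x$ evaluates the second factor at the shifted node, so one must verify that the shifted quantity is exactly $\delta_xU_{j_1,j_2}$ (not some other stencil point) in order for the undifferentiated remainder to assemble into $(\delta_xU_{j_1,j_2})^2$ rather than a mismatched shifted square. Getting this index alignment right is what guarantees that the ``flux'' terms appear as the clean discrete divergences $\delta_x(\delta_xU_{j_1-1,j_2}V_{j_1,j_2})$ and $\delta_y(\delta_yU_{j_1,j_2-1}V_{j_1,j_2})$ in \eqref{LEP-C}; everything else is a routine transcription of the continuous argument.
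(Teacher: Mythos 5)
Your proposal is correct and follows essentially the same route as the paper's proof: multiply the second equation by $V_{j_1,j_2}$, reduce the nonlinear term to $\frac{d}{dt}R_{j_1,j_2}^2$ via the third equation, and apply the discrete Leibniz rule to the shifted discrete Laplacian terms, using $V_{j_1,j_2}=\frac{d}{dt}U_{j_1,j_2}$ to turn the cross terms into $\frac{1}{2}\frac{d}{dt}(\delta_xU_{j_1,j_2})^2$ and $\frac{1}{2}\frac{d}{dt}(\delta_yU_{j_1,j_2})^2$, exactly reproducing \eqref{LSP-SG:2.75}--\eqref{LSP-SG:2.76}. Your explicit index bookkeeping for the shifted factor in Leibniz rule (b) is the one detail the paper leaves implicit, and it checks out.
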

%
\begin{prf} \rm
Multiplying the second equality of \eqref{2SG:eq:2.7} with $V_{j_1,j_2}$, we have
 \begin{align}\label{LSP-SG:2.73}
\frac{d}{dt}V_{j_1,j_2}V_{j_1,j_2}&-\delta_x^2U_{j_1-1,j_2}V_{j_1,j_2}\nonumber\\
&-\delta_y^2U_{j_1,j_2-1}V_{j_1,j_2}+\frac{\sin(U_{j_1,j_2})}{\sqrt{2-\cos(U_{j_1,j_2})}}V_{j_1,j_2}R_{j_1,j_2}=0.
 \end{align}
It is clear to see
 \begin{align}\label{LSP-SG:2.74}
 \frac{d}{dt}V_{j_1,j_2} V_{j_1,j_2}
 =\frac{1}{2}\frac{d}{dt}(V_{j_1,j_2}^2).
 \end{align}
 With the third equality of \eqref{2SG:eq:2.7}, we have
 \begin{align}\label{LSP-SG-73}
 \frac{\sin(U_{j_1,j_2})}{\sqrt{2-\cos(U_{j_1,j_2})}}R_{j_1,j_2}V_{j_1,j_2}=2\frac{d}{dt}R_{j_1,j_2}R_{j_1,j_2}=\frac{d}{dt}R_{j_1,j_2}^2.
 \end{align}
 According to the discrete Leibniz rule, we can deduce
 \begin{align}\label{LSP-SG:2.75}
\delta_x^2U_{j_1-1,j_2}V_{j_1,j_2}=\delta_x(\delta_xU_{j_1-1,j_2}V_{j_1,j_2})-\frac{1}{2}\frac{d}{dt}(\delta_xU_{j_1,j_2})^2,
 \end{align}
 and
  \begin{align}\label{LSP-SG:2.76}
 \delta_y^2U_{j_1,j_2-1}V_{j_1,j_2}=\delta_y(\delta_yU_{j_1,j_2-1}V_{j_1,j_2})-\frac{1}{2}\frac{d}{dt}(\delta_yU_{j_1,j_2})^2.
 \end{align}
 From \eqref{LSP-SG:2.73}-\eqref{LSP-SG:2.76}, we have
 \begin{align*}
\frac{d}{dt}\big(\frac{1}{2}V_{j_1,j_2}^2+\frac{1}{2}(\delta_xU_{j_1,j_2})^2&+\frac{1}{2}(\delta_yU_{j_1,j_2})^2+R_{j_1,j_2}^2\big)\nonumber\\
&-\delta_x(\delta_xU_{j_1-1,j_2}V_{j_1,j_2})-\delta_y(\delta_yU_{j_1,j_2-1}V_{j_1,j_2})=0.
\end{align*}
 This completes the proof.
\end{prf}
\begin{coro}\label{coro2.1}\rm For the periodic boundary conditions \eqref{SG-PBS}, the system \eqref{2SG:eq:2.7} admits
the semi-discrete modified global energy conservation law
\begin{align*}
\frac{d}{dt}\mathcal{E}=0,\ \mathcal{E}=h_1h_2\sum_{j_1=0}^{N_1-1}\sum_{j_2=0}^{N_2-1}\Big(\frac{1}{2}V_{j_1,j_2}^2+\frac{1}{2}(\delta_xU_{j_1,j_2})^2
+\frac{1}{2}(\delta_yU_{j_1,j_2})^2+R_{j_1,j_2}^2\Big).
\end{align*}
%

\end{coro}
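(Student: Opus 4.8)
The plan is to obtain the global law directly from the semi-discrete local energy conservation law \eqref{LEP-C} of Theorem \ref{2SG-lem2.1} by summing over the entire spatial grid, at which point the discrete flux (divergence) terms will telescope away under the periodic boundary conditions \eqref{SG-PBS}. First I would multiply \eqref{LEP-C} by $h_1 h_2$ and sum over $j_1 = 0,1,\ldots,N_1-1$ and $j_2 = 0,1,\ldots,N_2-1$. Since the time derivative commutes with the finite sum, the local energy density term assembles into exactly $\frac{d}{dt}\mathcal{E}$, and the claim reduces to showing that the two summed flux contributions, namely $h_1 h_2 \sum_{j_1,j_2}\delta_x(\delta_x U_{j_1-1,j_2}V_{j_1,j_2})$ and its $\delta_y$ analogue, both vanish.

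The key step is a discrete summation-by-parts argument. Fixing $j_2$ and abbreviating $G_{j_1}=\delta_x U_{j_1-1,j_2}V_{j_1,j_2}$, the forward-difference definition $\delta_x G_{j_1}=(G_{j_1+1}-G_{j_1})/h_1$ makes the inner sum telescope, $\sum_{j_1=0}^{N_1-1}\delta_x G_{j_1}=(G_{N_1}-G_0)/h_1$. I would then read off from \eqref{SG-PBS} that $U_{N_1,j_2}=U_{0,j_2}$ and $U_{-1,j_2}=U_{N_1-1,j_2}$, which together force $\delta_x U_{N_1-1,j_2}=\delta_x U_{-1,j_2}=(U_{0,j_2}-U_{N_1-1,j_2})/h_1$; combined with the $x$-periodicity $V_{N_1,j_2}=V_{0,j_2}$ this gives $G_{N_1}=G_0$, so the telescoped boundary term is zero for every $j_2$. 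Summing over $j_2$ then kills the entire $\delta_x$ flux, and the $\delta_y$ flux is annihilated by the identical telescoping in $j_2$ using the $y$-periodic identities of \eqref{SG-PBS}.

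The only real obstacle is the index bookkeeping at the wrap-around point: one must check that the shifted difference $\delta_x U_{-1,j_2}$ and the edge gradient $\delta_x U_{N_1-1,j_2}$ genuinely coincide, which is precisely where both ghost-point conditions in the $x$-periodic part of \eqref{SG-PBS} are needed simultaneously. Once both flux sums are shown to vanish, what remains of the summed identity is $\frac{d}{dt}\mathcal{E}=0$, which is the asserted modified global energy conservation law.
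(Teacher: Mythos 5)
Your proof is correct and follows exactly the route the paper intends: Corollary \ref{coro2.1} is stated as an immediate consequence of the local law \eqref{LEP-C}, obtained by summing over the grid and letting the discrete flux terms telescope to zero under the periodicity conditions \eqref{SG-PBS}. Your careful check that $\delta_x U_{N_1-1,j_2}=\delta_x U_{-1,j_2}$ (and the $y$-analogue) via both ghost-point identities, together with the periodicity of $V$ inherited from $U_t=V$, supplies precisely the bookkeeping the paper leaves implicit.
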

\subsection{Linearly implicit and local energy-preserving scheme}
We discretize the semi-discrete system \eqref{2SG:eq:2.7} using the linearly implicit structure-preserving method in time to arrive at a full-discrete scheme as follows:
\begin{align}\label{2SG:eq:3.1}
&\delta_tU_{j_1,j_2}^n=A_tV_{j_1,j_2}^{n},\\\label{2SG:eq:3.2}
& \delta_tV_{j_1,j_2}^n=\delta_x^2 A_tU_{j_1-1,j_2}^{n}+\delta_y^2 A_tU_{j_1,j_2-1}^{n}-\frac{\sin(\hat{U}_{j_1,j_2}^{n+\frac{1}{2}})}{\sqrt{2
-\cos(\hat{U}_{j_1,j_2}^{n+\frac{1}{2}})}}A_tR_{j_1,j_2}^{n},\\\label{2SG:eq:3.3}
&\delta_tR_{j_1,j_2}^n=\frac{\sin(\hat{U}_{j_1,j_2}^{n+\frac{1}{2}})}{2\sqrt{2
-\cos(\hat{U}_{j_1,j_2}^{n+\frac{1}{2}})}}A_tV_{j_1,j_2}^{n},\\
&0\le j_r\le N_r-1,\ r=1,2,\ 0\le n\le M-1,\nonumber
 \end{align}
which comprises our linearly implicit and local energy-preserving (LI-LEP) scheme for the sine-Gordon equation \eqref{2SG:eq:1.1}.
\begin{rmk} \rm It should remark that, since the LI-LEP scheme \eqref{2SG:eq:3.1}-\eqref{2SG:eq:3.3} is a three-level, we obtain ${U}_{j_1,j_2}^1,\ {V}_{j_1,j_2}^1$ and ${R}_{j_1,j_2}^1$ by using ${U}_{j_1,j_2}^{n}$ instead of $\hat{U}_{j_1,j_2}^{n+\frac{1}{2}}$ in \eqref{2SG:eq:3.2} and \eqref{2SG:eq:3.3} for the first step.
 \end{rmk}
\begin{rmk}\rm Here, we only give the LI-LEP scheme for the two dimensional sine-Gordon equation. In fact, the LI-LEP scheme can be easily extended to the sine-Gordon equation in one dimension and three dimensions, respectively.
\end{rmk}
\begin{rmk}\rm It should note that Gong et al \cite{GWW18} constructed two linearly implicit
and energy-preserving schemes for the KdV equation by utilizing the IEQ approach, however, these two schemes cannot conserve the discrete local energy conservation law of the original equation.
\end{rmk}
\begin{thm}\rm\label{2SG-lem3.2}  The LI-LEP scheme \eqref{2SG:eq:3.1}-\eqref{2SG:eq:3.3} possesses the discrete modified local energy conservation law
\begin{align}\label{LEP-F}
&\delta_t\big(\frac{1}{2}(V_{j_1,j_2}^n)^2+\frac{1}{2}(\delta_xU_{j_1,j_2}^n)^2+\frac{1}{2}(\delta_yU_{j_1,j_2}^n)^2
+(R_{j_1,j_2}^n)^2\big)\nonumber\\
&~~~~~~~~~~~~~~~~~~~~-\delta_x(A_t\delta_xU_{j_1-1,j_2}^nA_tV_{j_1,j_2}^n)
-\delta_y(A_t\delta_yU_{j_1,j_2-1}^nA_tV_{j_1,j_2}^n)=0,
\end{align}
for $j_r=0,1,2\cdots,N_r-1,\ r=1,2,$ and $n=0,1,2,\cdots,M-1$.

\end{thm}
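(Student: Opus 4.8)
The plan is to mirror the semi-discrete proof of Theorem~\ref{2SG-lem2.1} at the fully discrete level, replacing every continuous-time Leibniz rule by its discrete counterpart. First I would multiply the momentum equation~\eqref{2SG:eq:3.2} through by the temporal average $A_tV_{j_1,j_2}^n$, producing on the left the term $\delta_tV_{j_1,j_2}^nA_tV_{j_1,j_2}^n$ and on the right the two discrete-Laplacian terms paired with $A_tV_{j_1,j_2}^n$ together with the nonlinear term $\frac{\sin(\hat U_{j_1,j_2}^{n+\frac12})}{\sqrt{2-\cos(\hat U_{j_1,j_2}^{n+\frac12})}}A_tR_{j_1,j_2}^nA_tV_{j_1,j_2}^n$. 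I would then treat these four groups of terms separately and recombine.

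For the kinetic term, the discrete Leibniz rule~(a) immediately gives $\delta_tV_{j_1,j_2}^nA_tV_{j_1,j_2}^n=\tfrac12\delta_t\big((V_{j_1,j_2}^n)^2\big)$. For the nonlinear term I would invoke the auxiliary equation~\eqref{2SG:eq:3.3} to write $\frac{\sin(\hat U_{j_1,j_2}^{n+\frac12})}{\sqrt{2-\cos(\hat U_{j_1,j_2}^{n+\frac12})}}A_tV_{j_1,j_2}^n=2\delta_tR_{j_1,j_2}^n$, whereupon the whole nonlinear contribution collapses to $2\delta_tR_{j_1,j_2}^nA_tR_{j_1,j_2}^n=\delta_t\big((R_{j_1,j_2}^n)^2\big)$, again by rule~(a). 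This is the exact fully discrete analogue of~\eqref{LSP-SG-73} and is precisely where the IEQ reformulation pays off: the nonpolynomial factor is absorbed into a pure time difference of $R^2$.

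The substantive step is the spatial term, for which I would establish the discrete identity
\begin{align*}
\delta_x^2A_tU_{j_1-1,j_2}^nA_tV_{j_1,j_2}^n=\delta_x\big(A_t\delta_xU_{j_1-1,j_2}^nA_tV_{j_1,j_2}^n\big)-\tfrac12\delta_t\big((\delta_xU_{j_1,j_2}^n)^2\big),
\end{align*}
together with its $y$-analogue. To prove it I would apply the spatial Leibniz rule~(b) to the divergence term $\delta_x\big(A_t\delta_xU_{j_1-1,j_2}^nA_tV_{j_1,j_2}^n\big)$, which peels off $\delta_x^2A_tU_{j_1-1,j_2}^nA_tV_{j_1,j_2}^n$ and leaves the cross term $A_t\delta_xU_{j_1,j_2}^n\cdot\delta_xA_tV_{j_1,j_2}^n$. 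The commutation relations and, crucially, the position equation~\eqref{2SG:eq:3.1} then enter: applying $\delta_x$ to $\delta_tU_{j_1,j_2}^n=A_tV_{j_1,j_2}^n$ gives $\delta_t\delta_xU_{j_1,j_2}^n=\delta_xA_tV_{j_1,j_2}^n$, so the cross term becomes $A_t(\delta_xU_{j_1,j_2}^n)\cdot\delta_t(\delta_xU_{j_1,j_2}^n)$, which a last use of rule~(a) converts into $\tfrac12\delta_t\big((\delta_xU_{j_1,j_2}^n)^2\big)$.

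Substituting the kinetic, nonlinear and two spatial identities back into the multiplied momentum equation and rearranging yields~\eqref{LEP-F}. I expect the only real obstacle to be bookkeeping rather than analysis: keeping the half-grid index shift $j_1-1$ consistent through the spatial Leibniz rule, and commuting $A_t$ past $\delta_x,\delta_y$ in the right order so that the surviving cross term is the time average of $\delta_xU$ (evaluated at $j_1$) rather than of $V$. No mesh-ratio or regularity hypothesis is used anywhere, so the resulting local energy law is exact and unconditional.
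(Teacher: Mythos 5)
Your proposal is correct and follows essentially the same route as the paper's proof: multiply \eqref{2SG:eq:3.2} by $A_tV_{j_1,j_2}^n$, convert the kinetic term via discrete Leibniz rule (a), absorb the nonlinear term into $\delta_t\big((R_{j_1,j_2}^n)^2\big)$ via \eqref{2SG:eq:3.3}, and rewrite the discrete-Laplacian terms as a discrete divergence minus $\tfrac12\delta_t\big((\delta_xU_{j_1,j_2}^n)^2\big)$ and its $y$-analogue. If anything, your derivation of the spatial identity is more careful than the paper's, since you make explicit that the cross term $A_t\delta_xU_{j_1,j_2}^n\cdot\delta_xA_tV_{j_1,j_2}^n$ collapses only after invoking the position equation \eqref{2SG:eq:3.1} (so that $\delta_xA_tV_{j_1,j_2}^n=\delta_t\delta_xU_{j_1,j_2}^n$), a step the paper subsumes under ``the commutative law and discrete Leibniz rule.''
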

\begin{prf} \rm
Multiplying \eqref{2SG:eq:3.2} with $A_tV_{j_1,j_2}^n$, we have
 \begin{align}\label{LSP-SG-2.73}
  \delta_tV_{j_1,j_2}^nA_tV_{j_1,j_2}^n&-\delta_x^2 A_tU_{j_1-1,j_2}^{n}A_tV_{j_1,j_2}^n-\delta_y^2 A_tU_{j_1,j_2-1}^{n}A_tV_{j_1,j_2}^n\nonumber\\
  &~~~~~~~~~~~~~~~~~~~+\frac{\sin(\hat{U}_{j_1,j_2}^{n+\frac{1}{2}})}{\sqrt{2
-\cos(\hat{U}_{j_1,j_2}^{n+\frac{1}{2}})}}A_tV_{j_1,j_2}^nA_tR_{j_1,j_2}^{n}=0.
 \end{align}
 According to the commutative law and discrete Leibniz rule, we can deduce
 \begin{align}\label{LSP-SG-2.74}
 &\delta_tV_{j_1,j_2}^n\cdot A_t V_{j_1,j_2}^n=\frac{1}{2}\delta_t(V_{j_1,j_2}^n)^2,\\\label{LSP-SG-2.75}
 &\delta_x^2A_tU_{j_1-1,j_2}^n\cdot A_tV_{j_1,j_2}^n=\delta_x(A_t\delta_xU_{j_1-1,j_2}^n\cdot A_tV_{j_1,j_2}^n)-\frac{1}{2}\delta_t(\delta_xU_{j_1,j_2}^n)^2,\\\label{LSP-SG-2.76}
&\delta_y^2A_tU_{j_1,j_2-1}^n\cdot A_tV_{j_1,j_2}^n=\delta_y(A_t\delta_yU_{j_1,j_2-1}^n\cdot A_tV_{j_1,j_2}^n)-\frac{1}{2}\delta_t(\delta_yU_{j_1,j_2}^n)^2.
 \end{align}
 With \eqref{2SG:eq:3.3} and the discrete Leibniz rule, we obtain
  \begin{align}\label{LSP-SG2.77}
 \frac{\sin(A_t{U}_{j_1,j_2}^{n})}{\sqrt{2-\cos(A_t{U}_{j_1,j_2}^{n})}}A_tR_{j_1,j_2}^n A_tV_{j_1,j_2}^n=2\delta_tR_{j_1,j_2}^nA_tR_{j_1,j_2}^{n}=\delta_t (R_{j_1,j_2}^n)^2.
 \end{align}
 From \eqref{LSP-SG-2.73}-\eqref{LSP-SG2.77}, we have
\begin{align*}
\delta_t\big(\frac{1}{2}(V_{j_1,j_2}^n)^2&+\frac{1}{2}(\delta_xU_{j_1,j_2}^n)^2+\frac{1}{2}(\delta_yU_{j_1,j_2}^n)^2
+(R_{j_1,j_2}^n)^2\big)\nonumber\\
&~~~~~-\delta_x(A_t\delta_xU_{j_1-1,j_2}^nA_tV_{j_1,j_2}^n)-\delta_y(A_t\delta_yU_{j_1,j_2-1}^nA_tV_{j_1,j_2}^n)=0.
\end{align*}
This completes the proof.\qed
\end{prf}
\begin{coro}\rm \label{2SG:thm3.1} Under the periodic boundary conditions \eqref{SG-PBS}, the LI-LEP scheme \eqref{2SG:eq:3.1}-\eqref{2SG:eq:3.3} possesses the discrete modified global energy conservation law
\begin{align*}
\mathcal{E}^n=\cdots=\mathcal{E}^0,
\end{align*}
where
\begin{align*}
 \mathcal{E}^n=h_1h_2\sum_{j_1=0}^{N_1-1}\sum_{j_2=0}^{N_2-1}\Big(\frac{1}{2}(V_{j_1,j_2}^n)^2+\frac{1}{2}(\delta_xU_{j_1,j_2}^n)^2
+\frac{1}{2}(\delta_yU_{j_1,j_2}^n)^2+(R_{j_1,j_2}^n)^2\Big).
\end{align*}
\begin{rmk}\rm  We should note that the modified local energy conservation law \eqref{2SG:eq:1-4} is equivalent to the local energy conservation law \eqref{2SG:1.4} in continuous sense, but not for the discrete sense. This indicates that the proposed scheme \eqref{2SG:eq:3.1}-\eqref{2SG:eq:3.3} cannot preserve the following discrete local energy conservation law
\begin{align*}
&\delta_t\big(\frac{1}{2}(V_{j_1,j_2}^n)^2+\frac{1}{2}(\delta_xU_{j_1,j_2}^n)^2+\frac{1}{2}(\delta_yU_{j_1,j_2}^n)^2
+(1-\cos(U_{j_1,j_2}^n))\big)\nonumber\\
&~~~~~~~~~~~~~~~~~~~~~-\delta_x(A_t\delta_xU_{j_1-1,j_2}^nA_tV_{j_1,j_2}^n)
-\delta_y(A_t\delta_yU_{j_1,j_2-1}^nA_tV_{j_1,j_2}^n)=0,
\end{align*}
for $j_r=0,1,2,\cdots,N_r-1,\ r=1,2,$ and $n=0,1,2,\cdots,M-1$.

\end{rmk}

\end{coro}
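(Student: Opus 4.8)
The plan is to obtain the global law directly from the discrete \emph{local} energy conservation law \eqref{LEP-F} of Theorem \ref{2SG-lem3.2}, by summing over the entire spatial grid and using periodicity to annihilate the two flux (discrete-divergence) terms. This mirrors the way the semi-discrete global law of Corollary \ref{coro2.1} was extracted from its local counterpart. First I would multiply \eqref{LEP-F} by $h_1h_2$ and sum over $j_1=0,1,\dots,N_1-1$ and $j_2=0,1,\dots,N_2-1$. Since $\delta_t$ is linear and commutes with the spatial summation, the energy-density term contributes exactly $\delta_t\mathcal{E}^n$ by the definition of $\mathcal{E}^n$, so everything reduces to showing that the summed $x$- and $y$-flux terms vanish.

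The core step is a discrete summation-by-parts (telescoping) identity: for any grid function $W_{j_1,j_2}$ one has $\sum_{j_1=0}^{N_1-1}\delta_x W_{j_1,j_2}=\frac{1}{h_1}(W_{N_1,j_2}-W_{0,j_2})$, and analogously for $\delta_y$. Applying this with $W_{j_1,j_2}=A_t\delta_xU_{j_1-1,j_2}^nA_tV_{j_1,j_2}^n$, the $x$-flux collapses to the boundary difference $\frac{1}{h_1}(W_{N_1,j_2}-W_{0,j_2})$. The $x$-periodicity in \eqref{SG-PBS} gives $U_{N_1,j_2}=U_{0,j_2}$ and $U_{N_1-1,j_2}=U_{-1,j_2}$, whence $\delta_xU_{N_1-1,j_2}^n=\delta_xU_{-1,j_2}^n$, and also $V_{N_1,j_2}=V_{0,j_2}$; together these force $W_{N_1,j_2}=W_{0,j_2}$, so the telescoped difference is zero. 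Summing this over $j_2$ kills the entire $x$-flux contribution. The $y$-flux term is handled identically, using the $y$-periodicity and summing over $j_1$.

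With both divergence sums eliminated, what remains is $\delta_t\mathcal{E}^n=0$ for every $0\le n\le M-1$, i.e.\ $\mathcal{E}^{n+1}=\mathcal{E}^n$; iterating in $n$ yields $\mathcal{E}^n=\mathcal{E}^{n-1}=\cdots=\mathcal{E}^0$, the claimed discrete modified global energy conservation law. The only delicate point is the index bookkeeping in the flux $A_t\delta_xU_{j_1-1,j_2}^n$: one must check that the summand at $j_1=N_1$ genuinely coincides with that at $j_1=0$, which requires the identification of \emph{two} adjacent boundary layers ($j=0/j=N$ and $j=-1/j=N-1$). This is precisely why the periodic boundary conditions \eqref{SG-PBS} are imposed on two layers rather than one, and it is the step I would write out most carefully.
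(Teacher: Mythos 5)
Your proof is correct and follows exactly the route the paper intends (the paper states this corollary without writing out a proof): sum the discrete local law \eqref{LEP-F} of Theorem \ref{2SG-lem3.2} times $h_1h_2$ over the grid, telescope the $\delta_x$- and $\delta_y$-flux terms, and use the two-layer periodicity \eqref{SG-PBS} to cancel the boundary differences, giving $\delta_t\mathcal{E}^n=0$. Your careful check that $W_{N_1,j_2}=W_{0,j_2}$ via $\delta_xU_{N_1-1,j_2}^n=\delta_xU_{-1,j_2}^n$ is precisely the bookkeeping the two-layer boundary identification is designed for, so nothing is missing.
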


\section{Numerical analysis}\label{2SG:Sec4}
In this section, we will discuss the unique solvability and the convergence of the LI-LEP scheme \eqref{2SG:eq:3.1}-\eqref{2SG:eq:3.3}. Let
\begin{small}
\begin{align*}
\mathbb{ V}_{h}:&=\big\{{ U}|{ U}=(U_{0,0},U_{1,0},\cdots,U_{N_{1}-1,0},U_{0,1},U_{1,1},\cdots, U_{N_{1}-1,1},\cdots,U_{0,N_{2}-1},U_{1,N_{2}-1},\\
 &\cdots,U_{N_1-1,N_{2}-1})^{T}\big\}
\end{align*}
\end{small}
be the space of mesh functions defined on $\Omega_h$ and satisfy the periodic boundary conditions \eqref{SG-PBS}. For any grid functions, ${ U},{ V}\in\mathbb{ V}_{h}$, we define discrete inner product, as follows:
\begin{align*}
&\langle {U},{ V}\rangle_{h}=h_1h_2\sum_{j_1=0}^{N_1-1}\sum_{j_2=0}^{N_2-1}U_{j_1,j_2}V_{j_1,j_2},\ \langle \delta_w{U},\delta_w{ V}\rangle_{h}=h_1h_2\sum_{j_1=0}^{N_1-1}\sum_{j_2=0}^{N_2-1}\delta_wU_{j_1,j_2}\delta_wV_{j_1,j_2},
\end{align*}
for $w=x,y$.
The discrete $L^{2}$-norm of ${ U}^{n}\in\mathbb{V}_h$ and
its difference quotients are defined, respectively, as
\begin{align*}
&||{ U}||_{h}=\sqrt{\langle{ U},{U}\rangle_{h}},\ ||{\delta_{x} U}||_h=\sqrt{\langle{\delta_{x}U},\delta_{x}{ U}\rangle_{h}}, ||{\delta_{y} U}||_h=\sqrt{\langle{\delta_{y} U},\delta_{y}{ U}\rangle_{h}}.\\
\end{align*}
We also define discrete $H_h^1$ and $L^{\infty}$-norms, respectively, as
\begin{align*}
& ||{ U}||_{H_h^1}=\sqrt{||{ U}||_h^2+||{\delta_{x} U}||_h^2+||{\delta_{y} U}||_h^2},\ ||{ U}||_{h,\infty}=\max\limits_{0\le j_r\le N_r-1,r=1,2}|U_{j_1,j_2}|.
\end{align*}
In addition,  for simplicity, we denote $`\cdot$' as the componentwise product of the vectors ${ U},{V}\in\mathbb{ V}_{h}$, that is,
\begin{align*}
{ U}\cdot { V}=&\big(U_{0,0}V_{0,0},\cdots,U_{N_1-1,0}V_{N_1-1,0},\cdots,U_{0,N_2-1}V_{0,N_2-1},\cdots,U_{N_1-1,N_2-1}V_{N_1-1,N_2-1}\big)^{T}.
\end{align*}
\subsection{Solvability}
In this section, we will prove that the proposed scheme is uniquely solvable.
\begin{thm}\rm \label{2SG:thm3.2} The LI-LEP scheme \eqref{2SG:eq:3.1}-\eqref{2SG:eq:3.3} is uniquely solvable.

\end{thm}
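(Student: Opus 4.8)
The plan is to exploit the \emph{linearly implicit} structure of the scheme and reduce the claim to the nonsingularity of a single symmetric positive definite operator. The crucial observation is that the nonlinear factor $\sin(\hat U_{j_1,j_2}^{n+\frac12})/\sqrt{2-\cos(\hat U_{j_1,j_2}^{n+\frac12})}$ appearing in \eqref{2SG:eq:3.2}--\eqref{2SG:eq:3.3} depends only on $\hat U_{j_1,j_2}^{n+\frac12}=(3U_{j_1,j_2}^n-U_{j_1,j_2}^{n-1})/2$, i.e.\ on already computed data; denote this known grid function by $p_{j_1,j_2}$. Consequently, given $(U^n,V^n,R^n)$ and $U^{n-1}$, the scheme \eqref{2SG:eq:3.1}--\eqref{2SG:eq:3.3} is a \emph{linear} system for the unknowns $(U^{n+1},V^{n+1},R^{n+1})\in\mathbb V_h\times\mathbb V_h\times\mathbb V_h$. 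Since this is a square linear system on a finite-dimensional space, unique solvability is equivalent to showing that the associated homogeneous system admits only the trivial solution.

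First I would write out the homogeneous system, obtained by discarding all terms built from the known data. Collecting the unknowns (written $U,V,R$, with $p=p_{j_1,j_2}$), \eqref{2SG:eq:3.1} gives $U=\frac{\tau}{2}V$ and \eqref{2SG:eq:3.3} gives $R=\frac{\tau}{4}p\cdot V$. Substituting these two relations into the homogeneous form of \eqref{2SG:eq:3.2} eliminates $U$ and $R$ and leaves a single closed equation for $V$,
\begin{align*}
V_{j_1,j_2}-\frac{\tau^2}{4}\big(\delta_x^2 V_{j_1-1,j_2}+\delta_y^2 V_{j_1,j_2-1}\big)+\frac{\tau^2}{8}p_{j_1,j_2}^2V_{j_1,j_2}=0.
\end{align*}
Next I would test this identity with $V$ in the discrete inner product $\langle\cdot,\cdot\rangle_h$. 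Under the periodic boundary conditions \eqref{SG-PBS}, discrete summation by parts gives $-\langle \delta_x^2 V_{\cdot-1,\cdot}+\delta_y^2 V_{\cdot,\cdot-1},V\rangle_h=\|\delta_x V\|_h^2+\|\delta_y V\|_h^2\ge 0$, and the zeroth-order term contributes $\frac{\tau^2}{8}h_1h_2\sum_{j_1,j_2}p_{j_1,j_2}^2V_{j_1,j_2}^2\ge 0$. Hence
\begin{align*}
\|V\|_h^2+\frac{\tau^2}{4}\big(\|\delta_x V\|_h^2+\|\delta_y V\|_h^2\big)+\frac{\tau^2}{8}h_1h_2\sum_{j_1,j_2}p_{j_1,j_2}^2V_{j_1,j_2}^2=0,
\end{align*}
a sum of nonnegative terms. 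In particular $\|V\|_h^2=0$, so $V\equiv 0$, and then $U=\frac{\tau}{2}V\equiv 0$ and $R=\frac{\tau}{4}p\cdot V\equiv 0$. Thus the homogeneous system has only the trivial solution, the coefficient matrix is nonsingular, and the scheme is uniquely solvable. The same argument covers the modified first step in which $\hat U_{j_1,j_2}^{n+\frac12}$ is replaced by $U_{j_1,j_2}^n$, since $p_{j_1,j_2}$ remains a known coefficient there.

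I do not expect a genuine obstacle here: the argument is essentially routine once the linear implicitness is recognized. The only delicate point is the bookkeeping in the elimination step, namely tracking the $\delta_t$ and $A_t$ operators correctly and verifying that the reaction-type term enters the equation for $V$ with the coefficient $+\frac{\tau^2}{8}p^2$. Because this sign is nonnegative, the resulting quadratic form is bounded below by $\|V\|_h^2$, so positive definiteness --- and hence invertibility --- is immediate and requires no restriction on the mesh ratio $\tau/h$.
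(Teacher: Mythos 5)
Your proof is correct and follows essentially the same route as the paper: you eliminate the auxiliary unknowns using \eqref{2SG:eq:3.1} and \eqref{2SG:eq:3.3} and reduce solvability to the nonsingularity of the operator $I-\frac{\tau^2}{4}\Delta_h+\frac{\tau^2}{8}\mathcal{D}^2$, exactly as in the paper's reduction $\mathbb{A}U^{n+1}=b^n$. The only cosmetic differences are that you work with the homogeneous system in $V$ rather than the inhomogeneous one in $U^{n+1}$, and you verify the positive definiteness explicitly via discrete summation by parts, a step the paper merely asserts.
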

\begin{prf}\rm Let
$$\mathcal{D}=\text{diag}\Big({b}(\hat{U}_{0,0}^{n+\frac{1}{2}}),{b}(\hat{U}_{1,0}^{n+\frac{1}{2}}),\cdots,{b}(\hat{U}_{N_1-1,0}^{n+\frac{1}{2}})
\cdots,{b}(\hat{U}_{0,N_2-1}^{n+\frac{1}{2}}),{b}(\hat{U}_{1,N_2-1}^{n+\frac{1}{2}}),\cdots,{b}(\hat{U}_{N_1-1,N_2-1}^{n+\frac{1}{2}})\Big)^T,$$
where $b(x)=\frac{\sin(x)}
{\sqrt{2-\cos(x)}}$.
Note that
\begin{align}\label{new:3.1}
&V_{j_1,j_2}^{n+1}=\frac{2U_{j_1,j_2}^{n+1}-2U_{j_1,j_2}^{n}}{\tau}-V_{j_1,j_2}^{n},\\\label{new:3.2}
& R_{j_1,j_2}^{n+1}=R_{j_1,j_2}^n
+\frac{\sin(\hat{U}_{j_1,j_2}^{n+\frac{1}{2}})}{2\sqrt{2-\cos(\hat{U}_{j_1,j_2}^{n+\frac{1}{2}})}}\Big(U_{j_1,j_2}^{n+1}-U_{j_1,j_2}^{n}\Big),
\end{align}
for $0\le j_r\le N_r-1,\ r=1,2,$ and $0\le n\le M-1$. Eq. \eqref{2SG:eq:3.2} can be rewritten as
\begin{align}\label{LIM-EP:3.11}
\mathbb{A}{U}^{n+1}={b}^n,\ \mathbb{A}={I}-\frac{\tau^2}{4}{ \Delta}_h+\frac{\tau^2}{8}\mathcal{D}^2,\ { U}^{n+1},\ { b}^n\in\mathbb{V}_h,\ 0\le n\le M-1,
\end{align}
where $I$ is the $N_1\times N_2$ identity matrix and the matrix ${ \Delta}_h$ represents the operator $\delta_x^2+\delta_y^2$. With noting the symmetric positive definite property of $\mathbb{A}$, we finish the proof.
\qed
\end{prf}
\begin{rmk}\rm It should be noted that, in the practice computation, we solve the linear equations \eqref{LIM-EP:3.11} to get the numerical solution ${ U}^{n+1}$. Then, from \eqref{new:3.1} and \eqref{new:3.2}, the numerical solutions $V^{n+1}$ and $R^{n+1}$ are obtained, respectively. This indicates that the IEQ approach introduced an intermediate variable, but the intermediate variable can be eliminated in our computations. Thus, our new scheme can be implemented efficiently.
\end{rmk}

\subsection{Convergence analysis}
In this section, we will establish an optimal priori estimate for the LI-LEP scheme \eqref{2SG:eq:3.1}-\eqref{2SG:eq:3.3} in discrete $H_h^1$-norm.
Let $C_{p}^{r,r}(\Omega)=\{u\in C^{r,r}(\Omega)|u(x,y,t)=u(x+l_1,y,t),u(x,y,t)=u(x,y+l_2,t)\}$ and $h=\max\{h_1,h_2\}$, and denote $C$ as a positive
constant which is independent of $h$ and $\tau$, and may be different in different case.

\begin{lem}\rm \label{2SG:lem4.1}
 (Gronwall inequality \cite{zhou90}).
Suppose that the discrete function $\big\{\omega^{n}|n=0,1,2,\cdots,K;K\tau=T\big\}$ is nonnegative and satisfies the inequality
\begin{align*}
\omega^{n}\leq A+\tau\sum_{l=1}^{n}B_{l}\omega^{l},
\end{align*}
where $A$ and $B_k,\ (k=1,2,\cdots,K)$ are nonnegative constants. Then
\begin{align*}
\max\limits_{0 \leq n\leq
K}|\omega^{n}|\leq A e^{2\sum_{k=1}^{K}B_{k}\tau},
\end{align*}
where $\tau$ is sufficiently small, such that $\tau\big(\max\limits_{k=0,1,\cdots,
K}B_{k}\big)\leq\frac{1}{2}$.
\end{lem}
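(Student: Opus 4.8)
The plan is to reduce the implicit inequality to an explicit recursion for the monotone partial sums and then telescope into the exponential. First I would isolate the diagonal term $l=n$ from the sum, rewriting the hypothesis as
\begin{align*}
(1-\tau B_n)\omega^n\le A+\tau\sum_{l=1}^{n-1}B_l\omega^l.
\end{align*}
The smallness assumption $\tau\big(\max_{k}B_k\big)\le\frac12$ guarantees $1-\tau B_n\ge\frac12>0$, so the coefficient of $\omega^n$ is positive and the inequality may be divided through without reversing it. This step is precisely what turns the implicit estimate, in which $\omega^n$ appears on both sides, into a usable explicit one.

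Next I would introduce the auxiliary quantity
\begin{align*}
S^n:=A+\tau\sum_{l=1}^{n}B_l\omega^l,
\end{align*}
so that by hypothesis $\omega^n\le S^n$, the sequence $\{S^n\}$ is nondecreasing, and $S^0=A$. From the definition one has $S^n-S^{n-1}=\tau B_n\omega^n\le\tau B_n S^n$, which rearranges to the one-step contraction $(1-\tau B_n)S^n\le S^{n-1}$, i.e. $S^n\le S^{n-1}/(1-\tau B_n)$.

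To convert the product of these factors into the stated exponential, I would invoke the elementary inequality $1/(1-x)\le e^{2x}$, valid for $0\le x\le\frac12$, applied with $x=\tau B_n$; this yields $S^n\le e^{2\tau B_n}S^{n-1}$. Iterating this bound from $n$ down to $0$ telescopes the product, and using $S^0=A$ gives
\begin{align*}
S^n\le A\,e^{2\tau\sum_{l=1}^{n}B_l}\le A\,e^{2\sum_{k=1}^{K}B_k\tau}.
\end{align*}
Since $\omega^n\le S^n$ holds for every $n$ and the right-hand side is independent of $n$, taking the maximum over $0\le n\le K$ delivers the claimed estimate.

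As for difficulty, this is the classical discrete Gronwall lemma and presents no genuine obstacle; the only points requiring care are the treatment of the diagonal $l=n$ term (the sole place where the smallness hypothesis on $\tau$ enters, to keep $1-\tau B_n$ positive) and the elementary bound $1/(1-x)\le 1+2x\le e^{2x}$ that produces the factor $2$ in the exponent. The induction and telescoping are routine once the one-step contraction $S^n\le e^{2\tau B_n}S^{n-1}$ is in hand.
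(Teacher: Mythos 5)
Your proof is correct, but there is nothing in the paper to compare it against: the authors state this discrete Gronwall inequality as Lemma 4.1 and simply import it from Zhou's monograph \cite{zhou90} without giving any argument, so yours is a self-contained proof where the paper offers only a citation. What you wrote is the standard proof, and the two delicate points are handled properly: the step-size restriction $\tau B_n\le\frac12$ is used exactly once, to keep $1-\tau B_n\ge\frac12>0$ when the diagonal term is absorbed, and the elementary bound $\frac{1}{1-x}\le 1+2x\le e^{2x}$ on $[0,\frac12]$ (which you can verify via $(1+2x)(1-x)=1+x-2x^2\ge1$ there) is precisely what produces the factor $2$ in the exponent of the stated conclusion, telescoping $S^n\le e^{2\tau B_n}S^{n-1}$ down to $S^0=A$ and then enlarging $e^{2\tau\sum_{l=1}^{n}B_l}$ to $e^{2\tau\sum_{k=1}^{K}B_k}$. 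Two cosmetic remarks: your opening display $(1-\tau B_n)\omega^n\le A+\tau\sum_{l=1}^{n-1}B_l\omega^l$ is never actually used, since the recursion $(1-\tau B_n)S^n\le S^{n-1}$ already performs the diagonal absorption at the level of $S^n$; and the case $n=0$ is covered by the empty sum, giving $\omega^0\le A$, so the maximum over $0\le n\le K$ is legitimate. (The paper's statement writes $\max_{k=0,1,\cdots,K}B_k$ although $B_k$ is only defined for $k\ge1$; this is a harmless typo in the source, and your reading of the hypothesis is the intended one.)
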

\begin{lem}\rm \label{2SG:lem4.2} For the function $\ b(x)=\frac{\sin x}{\sqrt{2-\cos x }},\ \forall x\in\mathbb{R}$, we have
\begin{align*}
|b(x)|\le 1,\ |b^{'}(x)|=\Big|\frac{\cos x }{\sqrt{2-\cos x }}-\frac{\sin^2 x }{2(2-\cos x )^{\frac{3}{2}}}\Big|\le \frac{3}{2},\
\end{align*}
and
\begin{align*}
|b^{''}(x)|=\Big|-b(x)-\frac{3\sin 2x }{4(2-\cos x )^{\frac{3}{2}}}+\frac{3\sin^3 x}{4(2-\cos x )^{\frac{5}{2}}}\Big|\le \frac{5}{2}.
\end{align*}

\end{lem}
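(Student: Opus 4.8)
The plan is to exploit a single elementary observation that controls every term at once: since $\cos x\le 1$ for all real $x$, we have $2-\cos x\ge 1$, so each power $(2-\cos x)^{1/2}$, $(2-\cos x)^{3/2}$, $(2-\cos x)^{5/2}$ appearing in a denominator is bounded below by $1$. Consequently every fraction in the three expressions is dominated, in absolute value, by its numerator, which is a product of bounded trigonometric factors. All three bounds then reduce to the triangle inequality applied to crude termwise estimates.

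For the first bound I would not estimate $b$ directly but rather its square. Writing $c=\cos x\in[-1,1]$,
\begin{align*}
b(x)^2=\frac{\sin^2 x}{2-\cos x}=\frac{1-c^2}{2-c},
\end{align*}
so $b(x)^2\le 1$ is equivalent to $1-c^2\le 2-c$, i.e. $c^2-c+1\ge 0$. The left-hand side is a quadratic in $c$ with discriminant $1-4=-3<0$, hence strictly positive for every real $c$; this yields $|b(x)|\le 1$ for all $x$.

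For the first derivative I would verify the stated formula by differentiating $b(x)=\sin x\,(2-\cos x)^{-1/2}$ with the product and chain rules, which produces exactly
\begin{align*}
b'(x)=\frac{\cos x}{\sqrt{2-\cos x}}-\frac{\sin^2 x}{2(2-\cos x)^{3/2}}.
\end{align*}
Using $2-\cos x\ge 1$ together with $|\cos x|\le 1$ and $\sin^2 x\le 1$, the triangle inequality gives $|b'(x)|\le 1+\tfrac{1}{2}=\tfrac{3}{2}$. The same scheme handles the second derivative: differentiating $b'$ once more, and rewriting $2\sin x\cos x=\sin 2x$, reproduces the displayed expression
\begin{align*}
b''(x)=-b(x)-\frac{3\sin 2x}{4(2-\cos x)^{3/2}}+\frac{3\sin^3 x}{4(2-\cos x)^{5/2}},
\end{align*}
after which $|b(x)|\le 1$ (already established), $|\sin 2x|\le 1$, $|\sin^3 x|\le 1$, and the denominator lower bounds yield $|b''(x)|\le 1+\tfrac{3}{4}+\tfrac{3}{4}=\tfrac{5}{2}$.

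The main obstacle is not conceptual but bookkeeping: the differentiations must be carried out carefully so that the terms of $b'$ and $b''$ match the formulas quoted in the statement, in particular tracking the half-integer exponents and the factor $\tfrac{3}{2}$ that arises when the $-\tfrac{1}{2}\sin x\cos x\,(2-\cos x)^{-3/2}$ contribution coming from differentiating the first term of $b'$ combines with the $-\sin x\cos x\,(2-\cos x)^{-3/2}$ contribution from the second. Once the explicit forms are confirmed, the three bounds follow mechanically from the uniform estimate $2-\cos x\ge 1$ and the trivial bounds on the trigonometric numerators, with no need for any sharper analysis.
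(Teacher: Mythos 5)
Your proposal is correct: the paper states this lemma without proof (it is treated as an elementary calculus fact), and your verification — confirming the displayed formulas for $b'$ and $b''$ by direct differentiation, then bounding every term via $2-\cos x\ge 1$ and the triangle inequality — is exactly the intended argument. Your discriminant computation for $|b(x)|\le 1$ is valid, though even it can be shortcut by the same device, since $\sin^2 x\le 1\le 2-\cos x$ gives $b(x)^2\le 1$ directly.
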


Define the local truncation errors of the LI-LEP scheme \eqref{2SG:eq:3.1}-\eqref{2SG:eq:3.3} as
\begin{align}\label{2SG:eq:4.3}
&\delta_t^{+}u_{j_1,j_2}^n=A_tv_{j_1,j_2}^{n}+(\xi_1)_{j_1,j_2}^n,\\\label{2SG:eq:4.4}
& \delta_t^{+}v_{j_1,j_2}^n=\delta_x^2A_tu_{j_1-1,j_2}^{n}+\delta_y^2A_tu_{j_1,j_2-1}^{n}
-\frac{\sin(\hat{u}_{j_1,j_2}^{n+\frac{1}{2}})}{\sqrt{2-\cos(\hat{u}_{j_1,j_2}^{n+\frac{1}{2}})}}A_tr_{j_1,j_2}^{n}+(\xi_2)_{j_1,j_2}^n,\\\label{2SG:eq:4.5}
&\delta_t^{+}r_{j_1,j_2}^n=
\frac{\sin(\hat{u}_{j_1,j_2}^{n+\frac{1}{2}})}{2\sqrt{2-\cos(\hat{u}_{j_1,j_2}^{n+\frac{1}{2}})}}A_tv_{j_1,j_2}^{n}+(\xi_3)_{j_1,j_2}^n,
 \end{align}
for $0\le j_r\le N_r-1,\ r=1,2,$ and $1\le n\le M-1$. Assuming $u(x,y,t)\in C^{4}\Big(0,T; C_p^{4,4}(\Omega)\Big)$, then using the Taylor expansion, we have
\begin{align}\label{2SG-lte:4.7}
&|({\xi}_1)_{j_1,j_2}^n|\le C(h^2+\tau^2),\ |\delta_x({ \xi}_1)_{j_1,j_2}^n|+|\delta_y({ \xi}_1)_{j_1,j_2}^n|\le C(h^2+\tau^2),\\\label{2SG-lte:4.8}
&|({\xi}_2)_{j_1,j_2}^n|\le C(h^2+\tau^2),\ |({\xi}_3)_{j_1,j_2}^n|\le C(h^2+\tau^2),
\end{align}
for $0\le j_r\le N_r-1,\ r=1,2,$ and $1\le n\le M$.
Defining ``error functions" as
\begin{align*}
(e_1)_{j_1,j_2}^n=u_{j_1,j_2}^n-U_{j_1,j_2}^n,\ (e_2)_{j_1,j_2}^n=v_{j_1,j_2}^n-V_{j_1,j_2}^n,\ (e_3)_{j_1,j_2}^n=r_{j_1,j_2}^n-R_{j_1,j_2}^n,
\end{align*}
for $0\le j_r\le N_r-1,\ r=1,2,$ and $0\le n\le M$.
Then subtracting \eqref{2SG:eq:3.1}-\eqref{2SG:eq:3.3} from \eqref{2SG:eq:4.3}-\eqref{2SG:eq:4.5}, respectively, we obtain the error equations
\begin{align}\label{2SG:eq:4.9}
&\delta_t^{+}(e_1)_{j_1,j_2}^n=A_t(e_2)_{j_1,j_2}^{n}+(\xi_1)_{j_1,j_2}^n,\\\label{2SG:eq:4.10}
& \delta_t^{+}(e_2)_{j_1,j_2}^n=\delta_x^2 A_t(e_1)_{j_1-1,j_2}^{n}+\delta_y^2 A_t(e_1)_{j_1,j_2-1}^{n}\nonumber\\
&~~~~~~~~~~~~~~~~~~~~~~~~~~~~~~~-b(\hat{u}_{j_1,j_2}^{n+\frac{1}{2}})A_tr_{j_1,j_2}^{n}
+b(\hat{U}_{j_1,j_2}^{n+\frac{1}{2}})A_tR_{j_1,j_2}^{n}+(\xi_2)_{j_1,j_2}^n,\\\label{2SG:eq:4.11}
&\delta_t^{+}(e_3)_{j_1,j_2}^n=
\frac{1}{2}b(\hat{u}_{j_1,j_2}^{n+\frac{1}{2}})A_tv_{j_1,j_2}^{n}
-\frac{1}{2}b(\hat{U}_{j_1,j_2}^{n+\frac{1}{2}})A_tV_{j_1,j_2}^{n}+(\xi_3)_{j_1,j_2}^n,\\
& (e_1)_{0,j_2}^n=(e_1)_{N_1,j_2}^n,\ (e_1)_{-1,j_2}^n=(e_1)_{N_1-1,j_2}^n,\nonumber\\
 &(e_1)_{j_1,0}^n=(e_1)_{j_1,N_2}^n,\ (e_1)_{j_1,-1}^n=(e_1)_{j_1,N_2-1}^n\nonumber\\
& (e_1)_{j_1,j_2}^0=0,\ (e_2)_{j_1,j_2}^0=0,\ (e_3)_{j_1,j_2}^0=0,\nonumber
\end{align}
for $0\le j_r\le N_r-1,\ r=1,2$ and $0\le n\le M-1$.

\begin{thm}\rm \label{2SG:thm4.2} We assume $u(x,y,t)\in C^{4}\Big(0,T; C_p^{4,4}(\Omega)\Big)$. Then, we have the
following error estimate for the proposed scheme
\begin{align*}
&||{u}^{n}-{U}^{n}||_{H_h^1}+||{v}^{n}-{ V}^{n}||_{h}+||{ r}^{n}-{ R}^{n}||_{h}\leq C(h^2+\tau^{2}),\ u^n,U^n,v^n,V^n,r^n,R^n\in\mathbb{V}_h,
\end{align*}
for $1\le n\le M$.
\end{thm}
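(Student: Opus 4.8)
The plan is to run the classical discrete energy method on the error equations \eqref{2SG:eq:4.9}--\eqref{2SG:eq:4.11} and to close the estimate with the discrete Gronwall inequality (Lemma \ref{2SG:lem4.1}). The decisive structural feature that makes the estimate \emph{unconditional} is Lemma \ref{2SG:lem4.2}: the nonlinearity $b(x)=\sin x/\sqrt{2-\cos x}$ is globally bounded together with its derivatives, hence globally Lipschitz. Consequently no a priori $L^\infty$-bound on the numerical solution $U^n$ is required, so no discrete inverse inequality and no restriction on the mesh ratio $\tau/h$ ever enters — this is precisely the mechanism that forces mesh conditions for polynomial nonlinearities and that is circumvented here. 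I would set up the error energy
\[
G^n=\tfrac12\big(\|(e_1)^n\|_h^2+\|(e_2)^n\|_h^2+\|\delta_x(e_1)^n\|_h^2+\|\delta_y(e_1)^n\|_h^2\big)+\|(e_3)^n\|_h^2,
\]
which vanishes at $n=0$ by the initial conditions, and aim to prove $G^n\le C(h^2+\tau^2)^2$; taking square roots then yields the stated $H_h^1$ bound.

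The core step is a discrete energy identity obtained by testing the error equations against the right mesh functions: take $\langle\cdot,A_t(e_2)^n\rangle_h$ of \eqref{2SG:eq:4.10}, $\langle\cdot,2A_t(e_3)^n\rangle_h$ of \eqref{2SG:eq:4.11}, and $\langle\cdot,A_t(e_1)^n\rangle_h$ of \eqref{2SG:eq:4.9}. Using summation by parts under the periodic boundary conditions \eqref{SG-PBS}, the commutative law, the identity $\langle A_t w^n,\delta_t w^n\rangle_h=\tfrac12\delta_t\|w^n\|_h^2$, and substituting $A_t(e_2)^n=\delta_t(e_1)^n-(\xi_1)^n$ into the Laplacian contributions, the elliptic terms telescope into $\tfrac12\delta_t(\|\delta_x(e_1)^n\|_h^2+\|\delta_y(e_1)^n\|_h^2)$. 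The crucial cancellation mirrors the discrete energy conservation of Theorem \ref{2SG-lem3.2}: after splitting the nonlinear contributions as $-b(\hat u)A_tr+b(\hat U)A_tR=-(b(\hat u)-b(\hat U))A_tr-b(\hat U)A_t(e_3)$ in \eqref{2SG:eq:4.10} and $\tfrac12(b(\hat u)-b(\hat U))A_tv+\tfrac12 b(\hat U)A_t(e_2)$ in \eqref{2SG:eq:4.11}, the coupling terms $\langle b(\hat U^{n+\frac12})A_t(e_3)^n,A_t(e_2)^n\rangle_h$ appear with opposite signs and cancel exactly. What survives is
\[
\delta_t G^n=-\langle (b(\hat u)-b(\hat U))A_tr^n,A_t(e_2)^n\rangle_h+\langle (b(\hat u)-b(\hat U))A_tv^n,A_t(e_3)^n\rangle_h+\mathcal R^n,
\]
where $\mathcal R^n$ collects the linear truncation contributions $\langle(\xi_2)^n,A_t(e_2)^n\rangle_h$, $2\langle(\xi_3)^n,A_t(e_3)^n\rangle_h$, $\langle(\xi_1)^n,A_t(e_1)^n\rangle_h$, and the gradient terms $\langle\delta_wA_t(e_1)^n,\delta_w(\xi_1)^n\rangle_h$.

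To bound the residual nonlinear terms I would invoke the mean value theorem with $|b'|\le\tfrac32$ from Lemma \ref{2SG:lem4.2}, giving $|b(\hat u^{n+\frac12})-b(\hat U^{n+\frac12})|\le\tfrac32|\widehat{(e_1)}^{\,n+\frac12}|$ with $\widehat{(e_1)}^{\,n+\frac12}=\tfrac12(3(e_1)^n-(e_1)^{n-1})$. Since $A_tr^n$ and $A_tv^n$ are evaluated at the smooth exact solution and are therefore bounded by $C$, Cauchy--Schwarz and Young's inequality reduce these terms to $C(\|(e_1)^n\|_h^2+\|(e_1)^{n-1}\|_h^2+\|A_t(e_2)^n\|_h^2+\|A_t(e_3)^n\|_h^2)$, all controlled by $G^{n-1}+G^n+G^{n+1}$; here it is essential that $\|(e_1)^n\|_h$ itself sits in $G^n$, which is why testing \eqref{2SG:eq:4.9} against $A_t(e_1)^n$ was included. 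The truncation part $\mathcal R^n$ is handled by \eqref{2SG-lte:4.7}--\eqref{2SG-lte:4.8}, yielding $O((h^2+\tau^2)^2)$ after Young's inequality. Summing $\delta_t G^n$ from $n=1$ to $m$, multiplying by $\tau$, absorbing the $C\tau G^{m+1}$ term on the left for $\tau$ small, and applying the discrete Gronwall inequality then gives $G^n\le C(h^2+\tau^2)^2$.

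The main obstacle I anticipate is not the interior estimate — global Lipschitzness of $b$ keeps that routine — but rather the startup: the energy argument above uses the three-level stencil $\widehat{(e_1)}^{\,n+\frac12}$ and is valid only for $n\ge1$, so the telescoped sum carries a boundary term $G^1$. One must therefore analyze the modified two-level first step (Remark after \eqref{2SG:eq:3.3}, where $\hat U^{n+\frac12}$ is replaced by $U^n$) separately and show $G^1\le C(h^2+\tau^2)^2$; this requires a short, analogous energy estimate for a single step together with the local truncation bounds, after which the Gronwall conclusion transfers to $1\le n\le M$ and the desired $H_h^1$ error bound follows by taking square roots and summing the three error contributions.
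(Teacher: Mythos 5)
Your proposal is correct and takes essentially the same route as the paper: testing the error equations \eqref{2SG:eq:4.9}--\eqref{2SG:eq:4.11} with $A_t{e}_1^{n}$, $A_t{e}_2^{n}$, $A_t{e}_3^{n}$, substituting $A_t{e}_2^n=\delta_t^{+}{e}_1^n-{\xi}_1^n$ into the Laplacian terms, invoking the global bounds on $b$ and $b'$ from Lemma \ref{2SG:lem4.2} (which is exactly what makes the estimate unconditional), the truncation bounds \eqref{2SG-lte:4.7}--\eqref{2SG-lte:4.8}, the discrete Gronwall inequality of Lemma \ref{2SG:lem4.1}, and a separate first-step estimate for the modified two-level startup, which the paper relegates to the remark following the theorem. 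The only minor deviation is that you weight the third error equation by $2A_t{e}_3^n$ so that the coupling terms $\langle b(\hat{U}^{n+\frac{1}{2}})\cdot A_t{e}_3^{n},A_t{e}_2^{n}\rangle_h$ cancel exactly (mirroring Theorem \ref{2SG-lem3.2}), whereas the paper simply bounds both cross terms via $|b|\le 1$ and Cauchy--Schwarz in \eqref{2SG:eq:4.29}--\eqref{2SG:eq:4.30}; both choices yield the same unconditional $O(h^2+\tau^2)$ conclusion.
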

\begin{prf}\rm 
Taking the discrete inner product of \eqref{2SG:eq:4.9}-\eqref{2SG:eq:4.11} with $A_t{e}_1^{n}$, $A_t{e}_2^{n}$ and
$A_t{ e}_3^{n}$, respectively, we obtain
\begin{align}\label{2SG:eq:4.25}
&\frac{1}{2}\delta_t^+||{ e}_1^n||_h^2=\eta_1^n,\\\label{2SG:eq:4.26}
&\frac{1}{2}\delta_t^+||{ e}_2^n||_h^2+\frac{1}{2}\delta_t^+||\delta_x{ e}_1^n||_h^2+\delta_t^+||\delta_y{ e}_1^n||_h^2=\eta_2^n,\\\label{2SG:eq:4.27}
&\frac{1}{2}\delta_t^{+}||{ e}_3^n||_h^2=\eta_3^n,
\end{align}
where
\begin{align*}
\eta_1^n:&=\langle A_t{ e}_2^{n},A_t{ e}_1^{n}\rangle_h+\langle{ \xi}_1^n,A_t{ e}_1^{n}\rangle_h,\\
\eta_2^n:&=-\langle \big(b(\hat{ u}^{n+\frac{1}{2}})-b(\hat{ U}^{n+\frac{1}{2}})\big)\cdot A_t{r}^{n}+b(\hat{ U}^{n+\frac{1}{2}})\cdot A_t{ e}_3^{n},A_t{ e}_2^{n}\rangle_h+\langle{ \xi}_2^n,A_t{ e}_2^{n}\rangle_h\nonumber\\
&-\langle\delta_x^2 A_t{ e}_1^{n}+\delta_y^2 A_t{ e}_1^{n},{\xi}_1^n\rangle_h, \\
\eta_3^n:&=\frac{1}{2}\langle \big(b(\hat{ u}^{n+\frac{1}{2}})-b(\hat{U}^{n+\frac{1}{2}})\big)\cdot A_t{ v}^{n}+b({ U}^{n+\frac{1}{2}})\cdot A_t{ e}_2^{n},A_t{ e}_3^{n}\rangle_h+\langle{ \xi}_3^n,A_t{ e}_3^{n}\rangle_h.
\end{align*}
By using Lemma \ref{2SG:lem4.2}, the H\"older inequality and the Cauchy mean value theorem, we can deduce
\begin{align}\label{2SG:eq4.27}
&||\big(b(\hat{ u}^{n+\frac{1}{2}})-b(\hat{ U}^{n+\frac{1}{2}})\big)\cdot A_t{ r}^{n}||_h\le ||A_t{ r}^{n}||_{h,\infty}||b(\hat{ u}^{n+\frac{1}{2}})-b(\hat{ U}^{n+\frac{1}{2}})||_h\nonumber\\
&~~~~~~~~~~~~~~~~~~~~~~~~~~~~~~~~~~~~~~~~~~\le C||\hat{ e}_1^{n+\frac{1}{2}}||_h\leq C(||{ e}_1^{n-1}||_h+||{ e}_1^{n}||_h),\\\label{2SG:eq:4.28}
&||\big(b(\hat{ u}^{n+\frac{1}{2}})-b(\hat{ U}^{n+\frac{1}{2}})\big)\cdot A_t{ v}^{n}||_h\le ||A_t{ v}^{n}||_{h,\infty}||b(\hat{ u}^{n+\frac{1}{2}})-b(\hat{ U}^{n+\frac{1}{2}})||_h\nonumber\\
&~~~~~~~~~~~~~~~~~~~~~~~~~~~~~~~~~~~~~~~~~~\le C||\hat{e}_1^{n+\frac{1}{2}}||_h\leq C(||{ e}_1^{n-1}||_h+||{ e}_1^{n}||_h),\\\label{2SG:eq:4.29}
&||b(\hat{ U}^{n+\frac{1}{2}})\cdot A_t{ e}_2^{n}||_h\le ||b(\hat{ U}^{n+\frac{1}{2}})||_{h,\infty}||A_t{ e}_2^{n}||_h\nonumber\\
&~~~~~~~~~~~~~~~~~~~~~~~~~\le C||A_t{e}_2^{n}||_h\leq C(||{ e}_2^{n}||_h+||{e}_2^{n+1}||_h),
\end{align}
and
\begin{align}\label{2SG:eq:4.30}
||b(\hat{U}^{n+\frac{1}{2}})\cdot A_t{e}_3^{n}||_h&\le ||b(\hat{ U}^{n+\frac{1}{2}})||_{h,\infty}||A_t{ e}_3^{n}||_h\nonumber\\
&\le C||A_t{e}_3^{n}||_h\leq C(||{e}_3^{n}||_h+||{ e}_3^{n+1}||_h).
\end{align}

%
%
From \eqref{2SG-lte:4.7}-\eqref{2SG-lte:4.8} and \eqref{2SG:eq4.27}-\eqref{2SG:eq:4.30}, we then have
\begin{align}\label{2SG:eq:4.31}
&\eta_1^n\le C\big(||{ e}_1^{n}||_h^2+||{ e}_1^{n+1}||_h^2+||{ e}_2^{n}||_h^2+||{ e}_2^{n+1}||_h^2
\big)+C(h^2+\tau^2)^2,\\\label{2SG:eq:4.32}
&\eta_2^n\le C\big( ||{ e}_1^{n-1}||_h^2+||{ e}_1^{n}||_{H_1^h}^2+||{ e}_1^{n+1}||_{H_h^1}^2+||{ e}_2^{n}||_h^2\nonumber\\
&~~~~~~~~~~~~~~~~~~~~~~~~~~~~~~+||{ e}_2^{n+1}||_h^2+||{ e}_3^{n}||_h^2+||{ e}_3^{n+1}|_h^2\big)+C(h^2+\tau^2)^2,
\end{align}
and
\begin{align}\label{2SG:eq:4.33}
\eta_3^n\le&C\big( ||{ e}_1^{n-1}||_h^2+||{ e}_1^{n}||_h^2+||{ e}_2^{n}||_h^2\nonumber\\
&+||{ e}_2^{n+1}||_h^2+||{ e}_3^{n}||_h^2+||{e}_3^{n+1}|_h^2\big)+C(h^2+\tau^2)^2.
\end{align}
We introduce the following ``energy function" for the error equations
\begin{align*}
F^n=||{ e}_1^n||_{H_h^1}^2+||{e}_2^n||_h^2+||{ e}_3^n||_h^2,\ 1\le n\le M.
\end{align*}
Adding \eqref{2SG:eq:4.26} and \eqref{2SG:eq:4.27} and noting \eqref{2SG:eq:4.32}-\eqref{2SG:eq:4.33}, we then obtain
\begin{align}\label{2SG:eq4.34}
\delta_t^+\big(||\delta_x{ e}_1^n||_h^2&+||\delta_y{ e}_1^n||_h^2+||{ e}_2^n||_h^2+||{ e}_3^n||_h^2\big)\nonumber\\
&\le C(||{ e}_1^{n-1}||_{h}^2+||{ e}_1^n||_{H_h^1}^2+||{ e}_1^{n+1}||_{H_h^1}^2+||{ e}_2^{n}||_{h}^2\nonumber\\
&+||{e}_2^{n+1}||_{h}^2+||{ e}_3^n||_h^2+||{ e}_3^{n+1}||_h^2)+C(h^2+\tau^2)^2.
\end{align}
Combing \eqref{2SG:eq:4.25} and \eqref{2SG:eq4.34}, together with \eqref{2SG:eq:4.31}, we have
\begin{align}\label{2SG:eq:4.34}
F^{n}-F^{n-1}&\le C\tau (F^{n-1}+F^{n})+C\tau||{ e}_1^{n-2}||_h^2+C\tau(h^2+\tau^2)^2.
\end{align}
Summing up for the superscript $n$ from 2 to $m$ and then replacing $m$ by $n$, we can deduce from \eqref{2SG:eq:4.34} that
\begin{align}\label{2SG:eq:4.35}
F^{n}&\le F^1+C\tau \sum_{l=1}^nF^l+C\tau||{ e}_1^0||_h^2+CT(h^2+\tau^2)^2\nonumber\\
&\le C\tau \sum_{l=1}^nF^l+CT(h^2+\tau^2)^2.
\end{align}
Applying Lemma \ref{2SG:lem4.1} to Eq. \eqref{2SG:eq:4.35}, we get
 \begin{align*}
 ||{e}_{1}^n||_{H_h^1}^2+||{e}_2^n||_h^2+||{e}_3^n||_h^2\le C e^{2CT}(h^2+\tau^2)^2,
 \end{align*}
 which further implies that
 \begin{align*}
 ||{e}_{1}^n||_{H_h^1}+||{e}_2^n||_h+||{e}_3^n||_h\le C(h^2+\tau^2),
 \end{align*}
 where $\tau$ is sufficiently small, such that $C\tau \le \frac{1}{2}$. 
The proof is completed.\qed
\end{prf}

\begin{rmk}\rm It is noted that we have used the error estimate for the first step in \eqref{2SG:eq:4.35} i.e.,
\begin{align*}
||{e}_{1}^1||_{H_h^1}+||{ e}_2^1||_h+||{ e}_3^1||_h\le C(h^2+\tau^2).
\end{align*}
which can be established by the similar argument in Theorem \ref{2SG:thm4.2}. 
\end{rmk}

\section{Numerical examples}\label{2SG:Sec5}
In this section, we report the performance of the linear-implicit and local energy-preserving scheme \eqref{2SG:eq:3.1}-\eqref{2SG:eq:3.3} (denoted by LI-LEPS) for the sine-Gordon equation in one and two dimensions, respectively. The motivation of this paper is to provide a novel procedure to develop local structure-preserving schemes, thus,  it is valuable to compare our new scheme with some existing local structure-preserving schemes of same order in both space and time, as follows:
\begin{itemize}
\item SFDS: the symplectic midpoint finite difference scheme stated in Refs. \cite{CGM12,WWQ08} for the one dimensional sine-Gordon equation;
\item MBS: the multi-sympletic box scheme described in Refs. \cite{Reich00,SW08,WWQ08} for the one dimensional sine-Gordon equation;
\item EP-FDS: the energy-preserving finite difference scheme proposed in Refs. \cite{CGM12,LQ95,WWQ08} for the one dimensional sine-Gordon equation and in Ref. \cite{KFCG17} for the two dimensional case, respectively.
\end{itemize}
As a summary, a detailed table on the properties of each scheme has been
given in Tab. \ref{Tab_2SG:1}.

In the practical computation, we use standard fixed-point iteration for the fully implicit schemes and the preconditioned conjugate gradients method (named pcg in MATLAB functions) for the linear system (see \eqref{LIM-EP:3.11}) given by LI-LEPS. We also remark that the preconditioned conjugate gradients method is also adopt as the solver of linear systems given by the fully implicit schemes for each fixed-point iteration and we set $10^{-14}$ as the error tolerance for all the problems. In what follows spatial mesh steps are uniformly chosen as $h_1=h_2=h$ for simplicity, $L^2$- and $L^{\infty}$-errors are denoted as the $L^2$- and $L^{\infty}$-norms of the error between the numerical solution $U_{j_1,j_2}^n$ and the exact solution $u(x_{j_1},y_{j_2},t_n)$, respectively, and the energy deviation represents the relative energy error.
%
%

\begin{table}[H]
\tabcolsep=9pt
\small
\renewcommand\arraystretch{1.1}
\centering
\caption{Comparison of properties of different numerical schemes}\label{Tab_2SG:1}
\begin{tabular*}{\textwidth}[h]{@{\extracolsep{\fill}}c c c c c c}\hline 
 \diagbox{Property}{Scheme}& LI-LEPS& SFDS& MBS & EP-FDS\\\hline
 Multi-symplectic conservation & No&Yes&Yes&No   \\ [1ex]  
 Local energy conservation& Yes&No&No&Yes \\[1ex]
 Fully implicit&No&Yes&Yes&Yes\\[1ex]
 Linearly implicit&Yes&No&No&No\\[1ex]
\hline
\end{tabular*}
\end{table}

\subsection{One dimensional nonlinear sine-Gordon equation}
In this section, we first report the performance of LI-LEPS for the one dimensional nonlinear sine-Gordon equation:
\begin{align}\label{1SG:eq:5.1}
u_{tt}=u_{xx}-\sin(u),\ -20\le x\le 20,
\end{align}
with the initial conditions
\begin{align*}
&f(x)=0,\\
&g(x)=4\text{sech}(x).
\end{align*}
and the periodic boundary condition. Eq. \eqref{1SG:eq:5.1} possesses the analytical solution \cite{BCI15,BI16}
\begin{align*}
u(x,t)=4\tan^{-1}\big[t\text{sech}(x)\big].
\end{align*}



 The errors and convergence orders of LI-LEPS, SFDS, MBS and EP-FDS at time $t=1$ are given in Tab. \ref{Tab_2SG:2}. From Tab. \ref{Tab_2SG:2}, we can draw the following observations: (i) All schemes have second order accuracy in time and space errors; (ii) The error provided by the MBS is largest, while the error provided by SFDS is smallest; (iii) The error provided by LI-LEPS has the same order of magnitude as the one provided by SFDS.

 In Fig. \ref{figure-1}, we carry out comparisons on the computational costs
among the four schemes by refining the mesh size gradually. It is clear to see that the cost of MBS is most expensive while the one of LI-LEPS is cheapest.
Moreover, as the refinement of mesh sizes, the advantage of LI-LEPS emerges, which implies that our scheme is more preferable for large scale simulations than the structure-preserving schemes SFDS, MBS and EP-FDS.

In Fig. \ref{figure-2}, we display double-polesolution (left plot) and the energy error deviation (right plot). It is clear to observe that LI-LEPS can preserve the shape of the soliton accurately and shows a remarkable advantage in the conservation of energy.

\begin{table}[H]
\tabcolsep=9pt
\small
\renewcommand\arraystretch{1.1}
\centering
\caption{{Numerical error and convergence order of different schemes at time $t=1$.}}\label{Tab_2SG:2}
\begin{tabular*}{\textwidth}[h]{@{\extracolsep{\fill}}c l l l l l}\hline
{Scheme\ \ }&{($h,\tau$)}  &{$L^{2}$-error} & {order}  & {$L^{\infty}$-error} & {order} \\     
\hline

 \multirow{4}{*}{LI-LEPS}&{($\frac{1}{10},\frac{1}{100}$)}  & {1.2515e-03} & {-} & {1.3017e-03}&{-}\\[1ex]
 {}&{($\frac{1}{20},\frac{1}{200}$)}  & {3.1285e-04} & {2.00} & {3.2508e-04}&{2.00}\\[1ex]
  {}&{($\frac{1}{40},\frac{1}{400}$)} & {7.8211e-05} & {2.00}  & {8.1248e-05} & {2.00}  \\[1ex]   
 {}&{($\frac{1}{80},\frac{1}{800}$)}  & {1.9553e-05} & {2.00} & {2.0311e-05}&{2.00} \\\hline
 \multirow{4}{*}{SFDS}&{($\frac{1}{10},\frac{1}{100}$)}  & {1.1039e-03} & {-} & {1.0385e-03}&{-}\\[1ex]
 {}&{($\frac{1}{20},\frac{1}{200}$)}  & {2.7595e-04} & {2.00} & {2.5927e-04}&{2.00}\\[1ex]
  {}&{($\frac{1}{40},\frac{1}{400}$)} & {6.8986e-05} & {2.00}  & {6.4795e-05} & {2.00}  \\ [1ex]  
 {}&{($\frac{1}{80},\frac{1}{800}$)}  & {1.7246e-05} & {2.00} & {1.6197e-05}&{2.00} \\\hline
 \multirow{4}{*}{MBS}&{($\frac{1}{10},\frac{1}{100}$)}  & {2.1309e-03} & {-} & {2.1556e-03}&{-}\\[1ex]
 {}&{($\frac{1}{20},\frac{1}{200}$)}  & {5.3158e-04} & {2.00} & {5.3902e-04}&{2.00}\\[1ex]
  {}&{($\frac{1}{40},\frac{1}{400}$)} & {1.3282e-04} & {2.00}  & {1.3476e-04} & {2.00}  \\ [1ex]  
 {}&{($\frac{1}{80},\frac{1}{800}$)}  & {3.3201e-05} & {2.00} & {3.3691e-05}&{2.00} \\\hline
 \multirow{4}{*}{EP-FDS}&{($\frac{1}{10},\frac{1}{100}$)}  & {1.1112e-03} & {-} & {1.0535e-03}&{-}\\[1ex]
 {}&{($\frac{1}{20},\frac{1}{200}$)}  & {2.7777e-04} & {2.00} & {2.6301e-04}&{2.00}\\[1ex]
  {}&{($\frac{1}{40},\frac{1}{400}$)} & {6.9442e-05} & {2.00}  & {6.5729e-05} & {2.00}  \\  [1ex] 
 {}&{($\frac{1}{80},\frac{1}{800}$)}  & {1.7360e-05} & {2.00} & {1.6431e-05}&{2.00} \\\hline
\end{tabular*}
\end{table}

\begin{figure}[H]
\centering \begin{minipage}[t]{80mm}
\includegraphics[width=80mm]{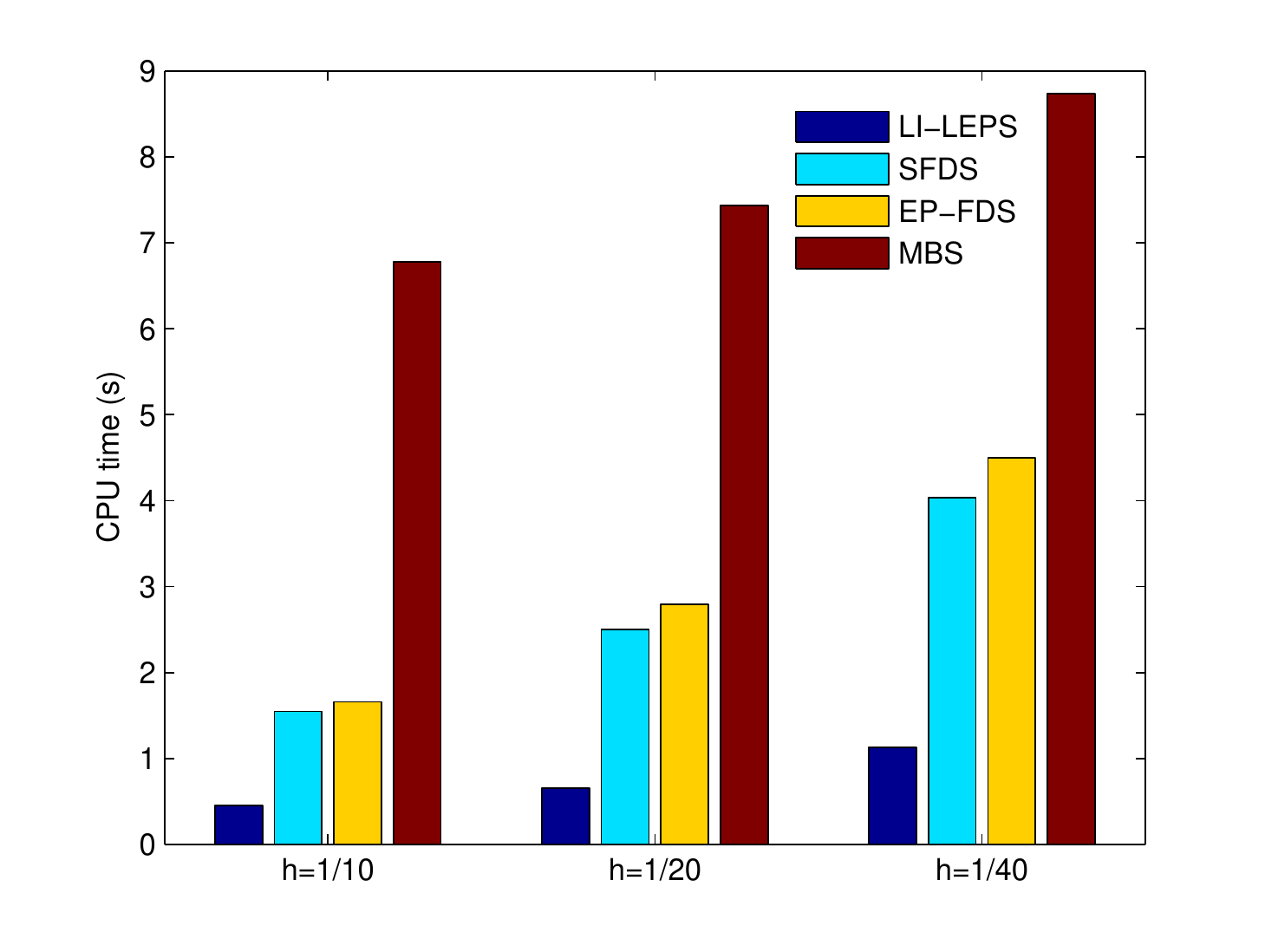}
\end{minipage}
\caption{\footnotesize CPU time of the four schemes for the soliton with different mesh
sizes till $t=1$ under $\tau=0.01$. The computation is carried out via Matlab 7.0 with AMD A8-7100 and RAM 4GB.}\label{figure-1}
\end{figure}


\begin{figure}[H]
\centering \begin{minipage}[t]{70mm}
\includegraphics[width=70mm]{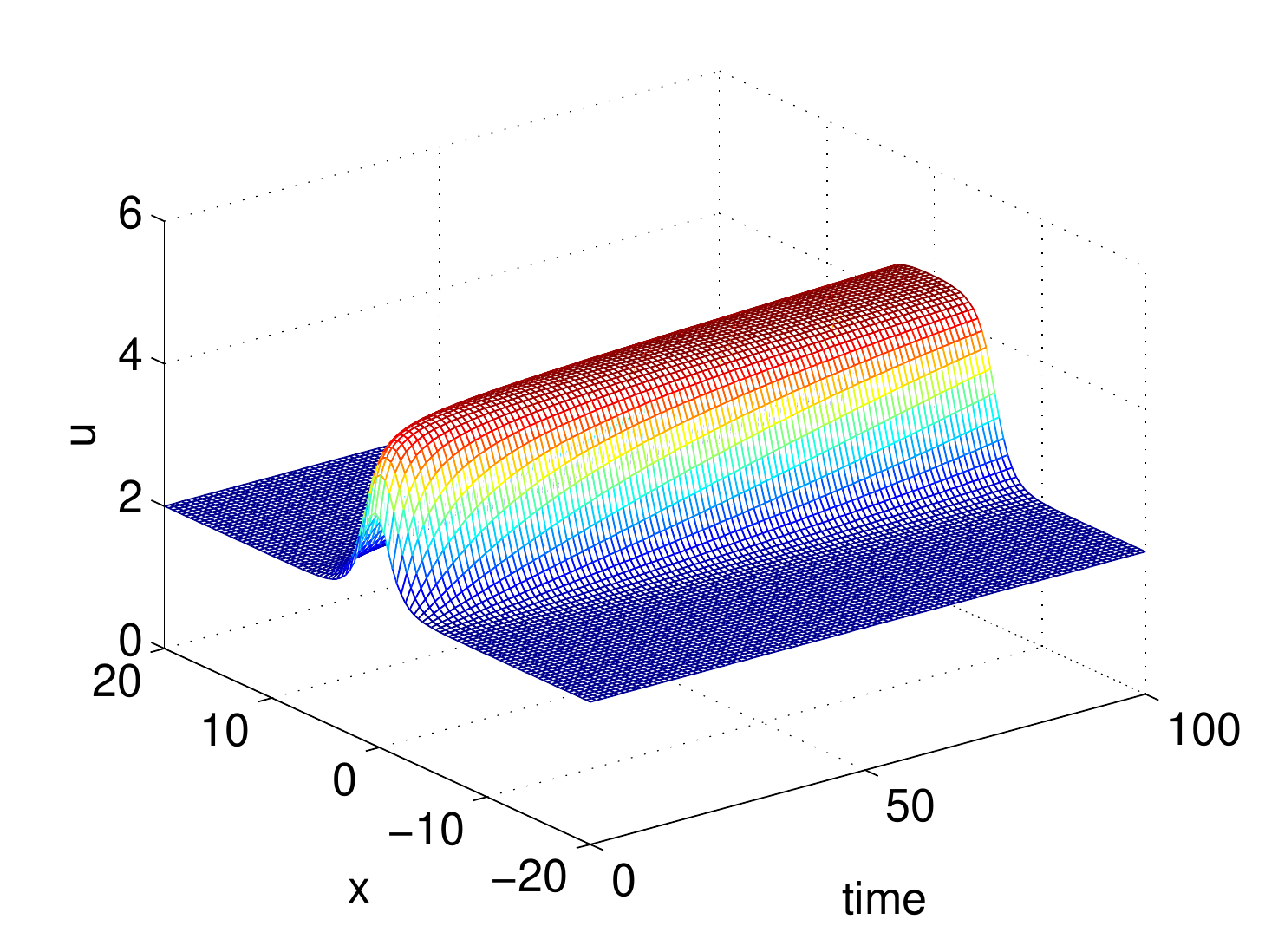}
\end{minipage}
\begin{minipage}[t]{70mm}
\includegraphics[width=70mm]{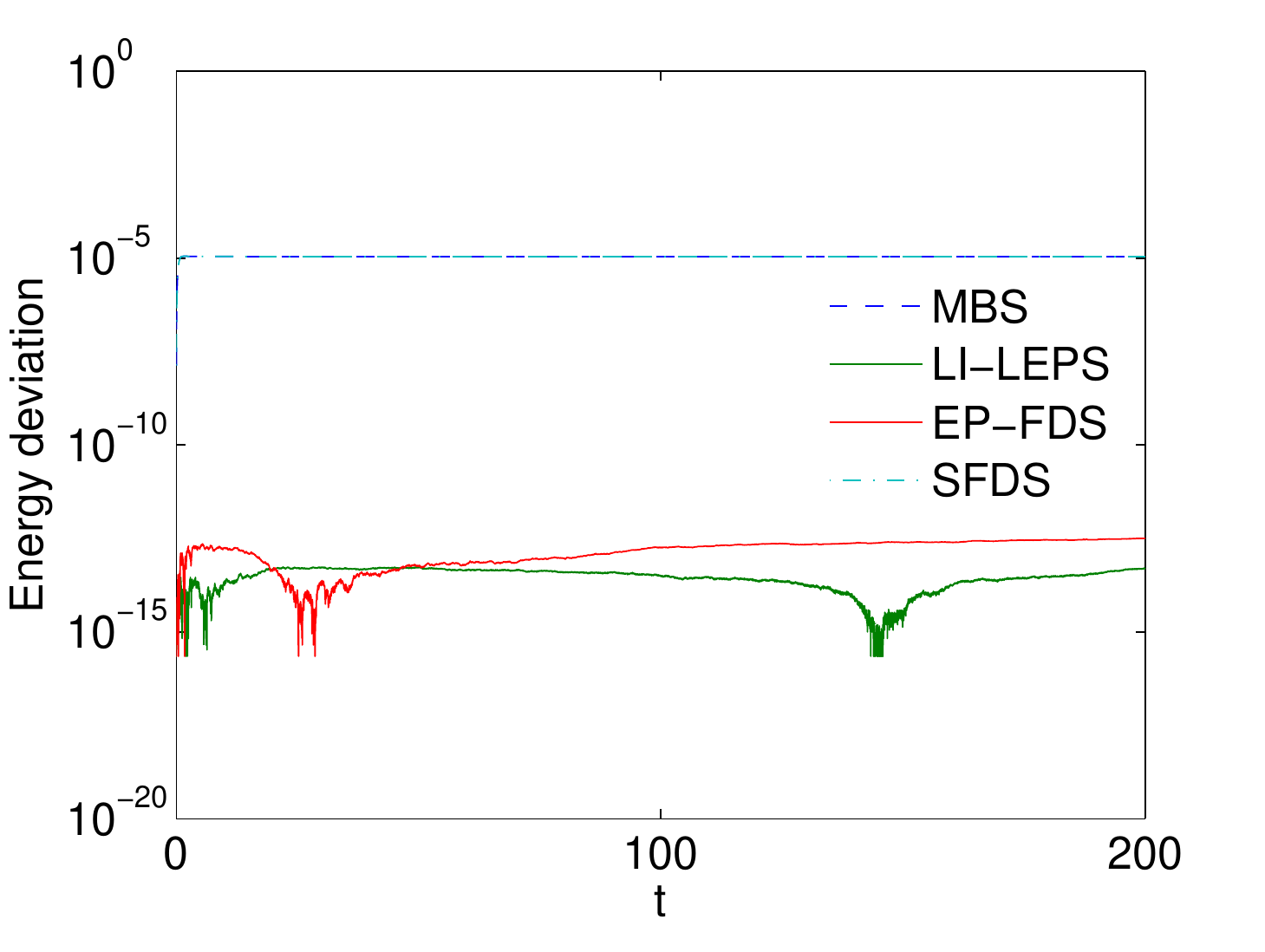}
\end{minipage}
\caption{\footnotesize Double-polesolution (left plot), and the energy deviation (right plot). Spatial and temporal mesh sizes are taken as $h=0.1$ and $\tau=0.01$, respectively.}\label{figure-2}
\end{figure}

\subsection{Two dimensional nonlinear sine-Gordon equation}
In this section, we show the performance of LI-LEPS for the two dimensional nonlinear sine-Gordon equation:
\begin{align}\label{2SG:eq:5.6}
u_{tt}=u_{xx}+u_{yy}-\sin(u),\ (x,y)\in\Omega.
\end{align}
\subsubsection{Accuracy test}
In this example, we consider the sine-Gordon equation \eqref{2SG:eq:5.6} with the initial conditions
\begin{align*}
&f(x,y)=4\tan^{-1}(\exp(x+y)),\ -7\le x,y\le 7,\\
&g(x,y)=-\frac{4\exp(x+y)}{1+\exp(2x+2y)},\ -7\le x,y\le 7,
\end{align*}
and the boundary conditions
\begin{align*}
&u(x,y,t)=4\tan^{-1}(\exp(x+y-t)),\ \text{for}\ x=-7\ \text{and} \ x=7, -7\le y\le 7,\ t>0,\\
&u(x,y,t)=4\tan^{-1}(\exp(x+y-t)),\ \text{for}\ y=-7\ \text{and} \ y=7, -7\le x\le 7,\ t>0.
\end{align*}
Eq. \eqref{2SG:eq:5.6} possesses the analytical solution
\begin{align*}
u(x,y,t)=4\tan^{-1}(\exp(x+y-t)).
\end{align*}

 The comparison of the spatial and temporal errors of LI-LEPS and EP-FDS at time $t=1$ is displayed in Tab. \ref{Tab_2SG:4}, which shows that LI-LEPS and EP-FDS have second order accuracy in time and space errors and the error provided by LI-LEPS is same order of magnitude as the one provided by EP-FDS. In Fig. \ref{fig3}, we carry out comparisons on the computational costs
between LI-LEPS and EP-FDS by refining the mesh size gradually, which behaves similarly as that of Fig. \ref{figure-1}.

%

\begin{table}[H]
\tabcolsep=9pt
\small
\renewcommand\arraystretch{1.2}
\centering
\caption{{Numerical error and convergence order of LI-LEPS and EP-FDS at time $t=1$.}}\label{Tab_2SG:4}
\begin{tabular*}{\textwidth}[h]{@{\extracolsep{\fill}}c l l l l l}\hline
{Scheme}&{$(h,\tau)$}  &{$L^{2}$-error} & {order}  & {$L^{\infty}$-error} & {order} \\ \hline    
\multirow{4}{*}{LI-LEPS}&{($\frac{1}{2}$,$\frac{1}{100}$)}  & {1.2129e-01} & {-} & {2.7812e-02}&{-} \\[1ex]
{}& {($\frac{1}{4}$,$\frac{1}{200}$)}  & {3.0043e-02} & {2.01} & {7.8107e-03}&{1.83}\\[1ex]
 {}&{($\frac{1}{8}$,$\frac{1}{400}$)}  & {7.4920e-03} & {2.00} & {1.9545e-03}&{2.00}\\[1ex]
  {}&{($\frac{1}{16}$,$\frac{1}{800}$)} & {1.8718e-03} & {2.00}  & {4.8891e-04} & {2.00}  \\ \hline  
\multirow{4}{*}{EP-FDS}& {($\frac{1}{2}$,$\frac{1}{100}$)}  & {1.2132e-01} & {-} & {2.7774e-02}&{-}\\[1ex]
 {}&{($\frac{1}{4}$,$\frac{1}{200}$)}  & {3.0049e-02} & {2.01} & {7.8030e-03}&{1.83}\\[1ex]
  {}&{($\frac{1}{8}$,$\frac{1}{400}$)} & {7.4935e-03} & {2.00}  & {1.9525e-03} & {2.00}  \\ [1ex]  
{}&{($\frac{1}{16}$,$\frac{1}{800}$)}  & {1.8722e-03} & {2.00} & {4.8840e-04}&{2.00} \\\hline
\end{tabular*}
\end{table}
\begin{figure}[H]
\centering \begin{minipage}[t]{80mm}
\includegraphics[width=80mm]{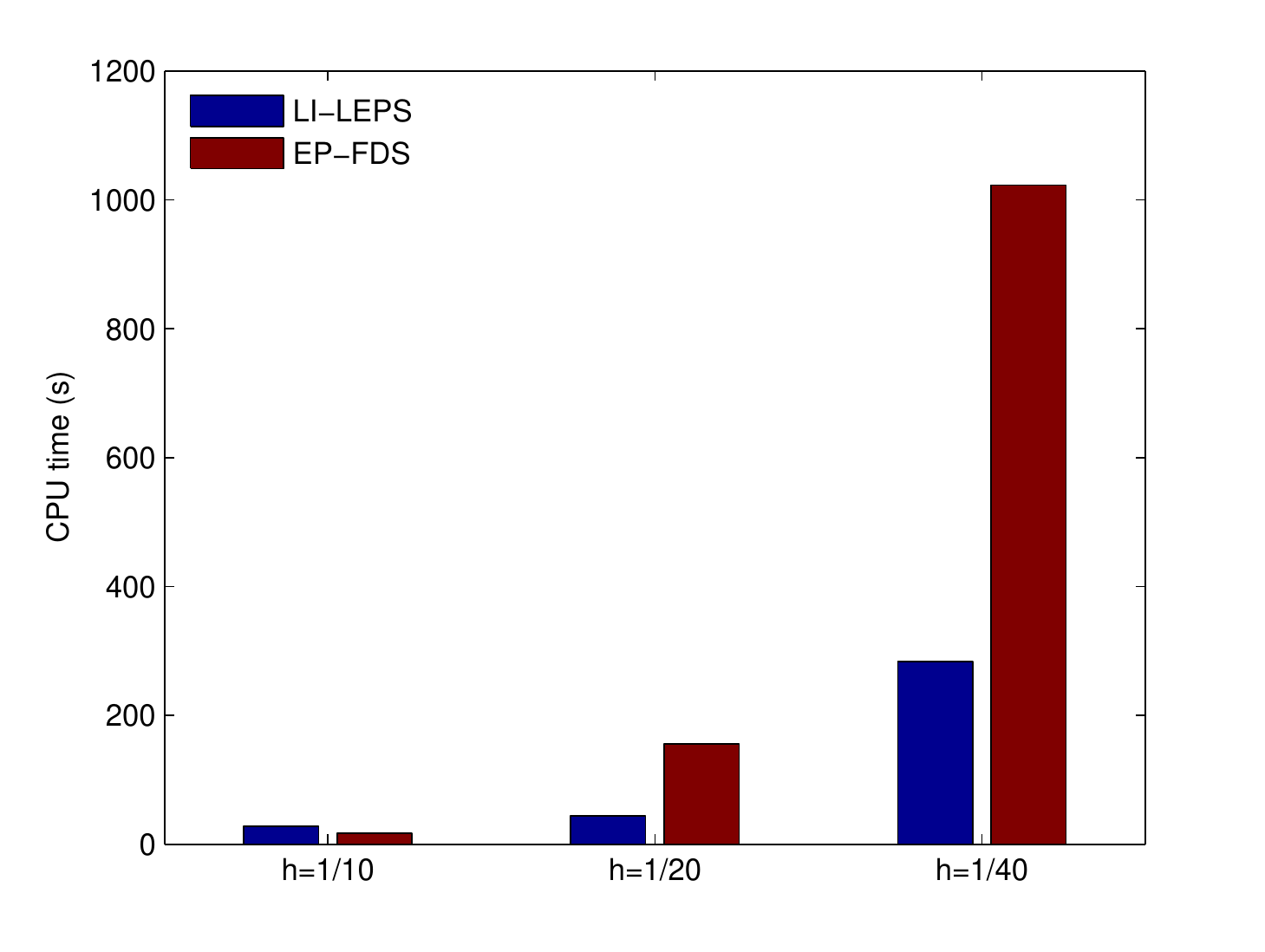}
\end{minipage}
\caption{\footnotesize CPU time of the two schemes for the soliton with different mesh
sizes till $t=1$ under $\tau=0.01$. The computation is carried out via Matlab 7.0 with AMD A8-7100 and RAM 4GB.}\label{fig3}
\end{figure}

\subsubsection{Circular ring solitons}

We then consider the circular ring solitons with the initial conditions \cite{Argyris91,CL81,DPT95,SKV10}
\begin{align*}
&f(x,y)=4\tan^{-1}\left[\exp\left(3-\sqrt{x^{2}+y^{2}}\right)\right],\ -14\le x,y\le 14,\\
& g(x,y)=0,\ -14\le x,y\le 14,
\end{align*}
and the periodic boundary conditions.

Fig. \ref{2SG:fig2} presents the initial condition as well as numerical solutions at times $t=0,4,8,11.5$ and $t=15$, respectively. The ring solitons simulated shrink at the initial
stage, but oscillations and radiations begin to form and continue slowly as time goes on. This fact is consistent with the results obtained in Refs. \cite{Argyris91,CL81,DPT95,SKV10} and can be clearly viewed in the contour plots. The energy deviations of two
schemes over the time interval $t\in[0,50]$ are given in Fig. \ref{2SG:err1},
which shows that the two schemes can precisely preserve the energy in long time computation, and the error provided by LI-LEPS is much smaller than the one provided by the EP-FDS.

\begin{figure}[H]
\centering\begin{minipage}[t]{60mm}
\includegraphics[width=60mm]{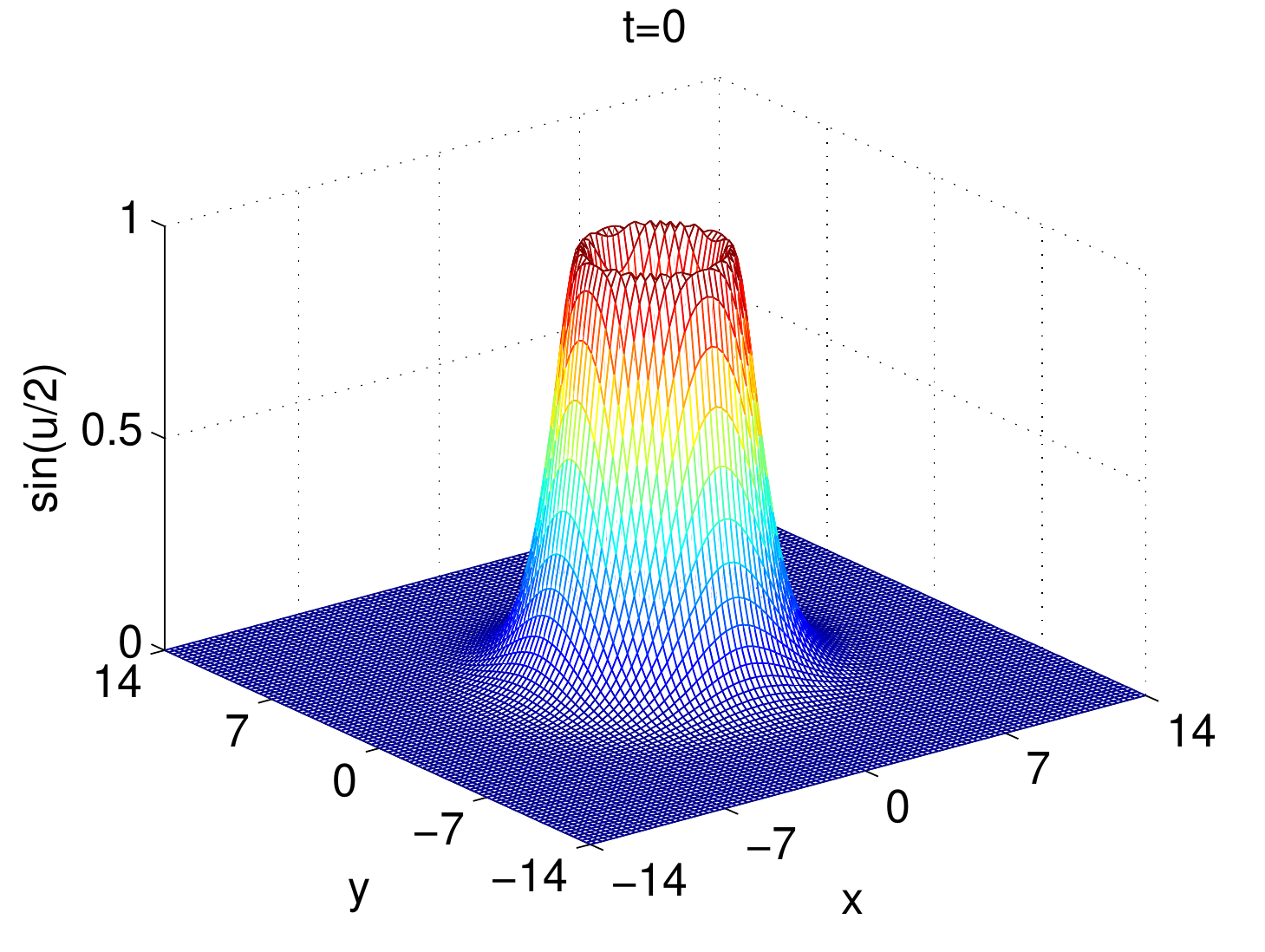}
\end{minipage}
\begin{minipage}[t]{60mm}
\includegraphics[width=60mm]{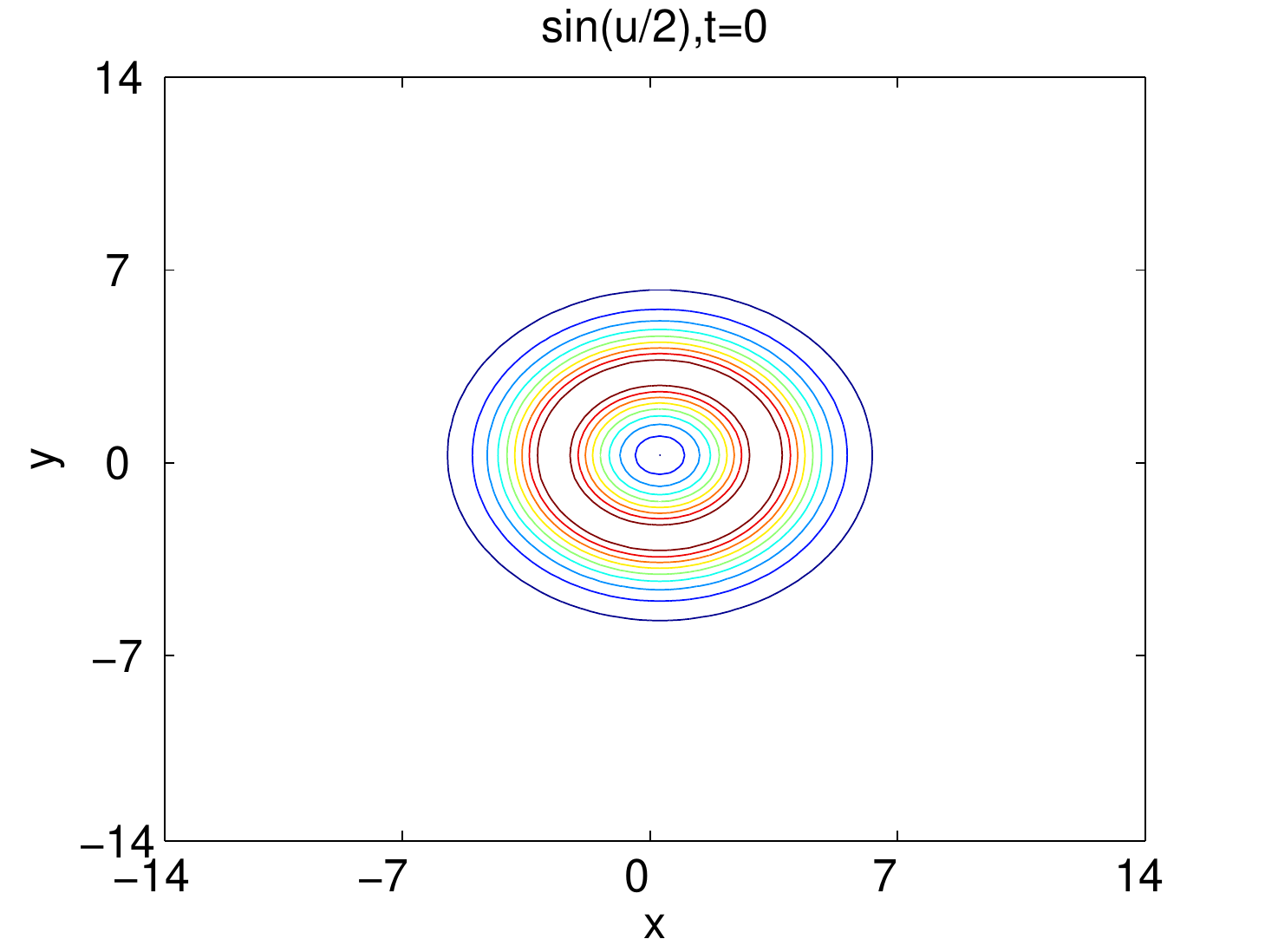}
\end{minipage}
\begin{minipage}[t]{60mm}
\includegraphics[width=60mm]{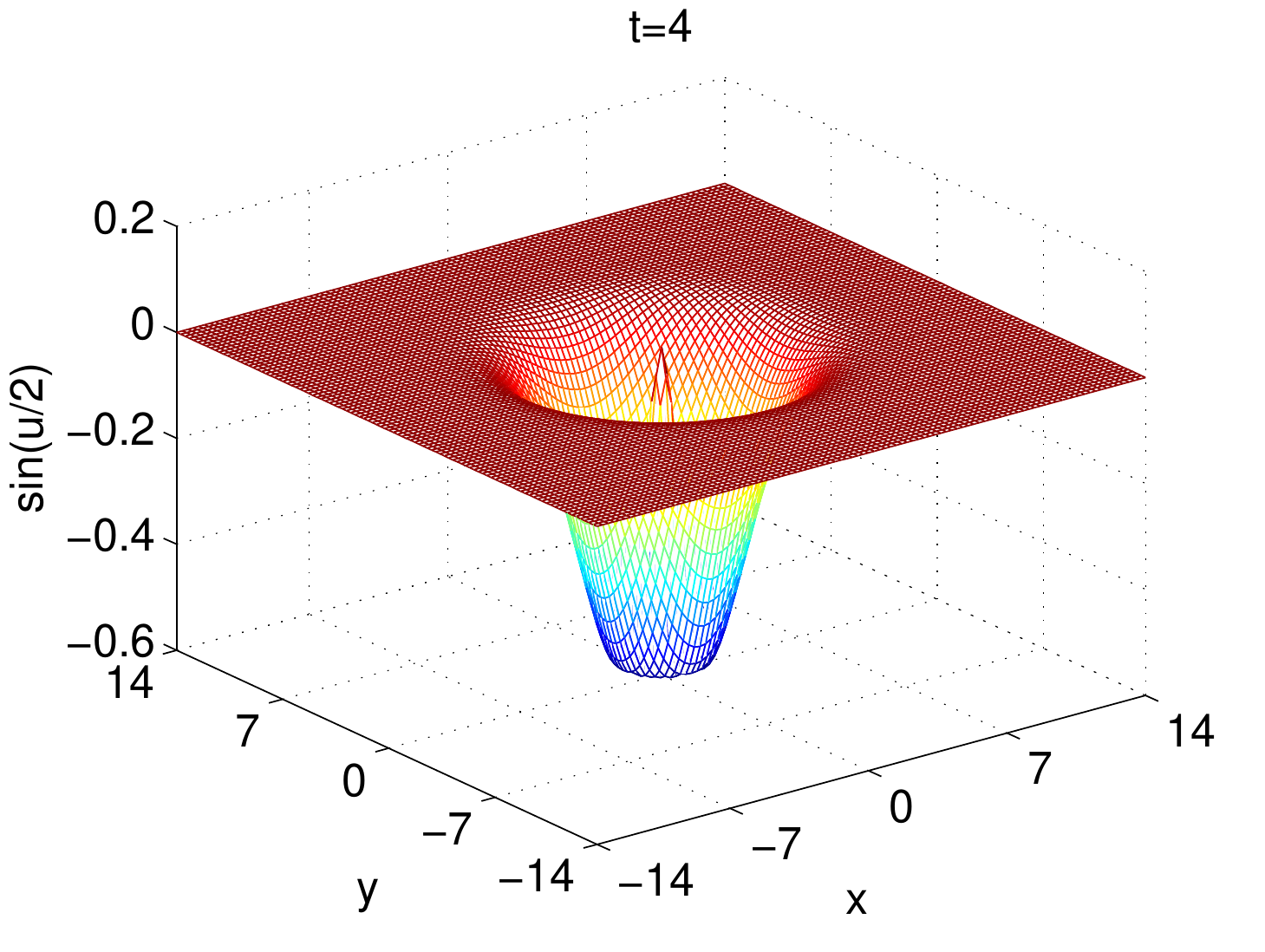}
\end{minipage}
\begin{minipage}[t]{60mm}
\includegraphics[width=60mm]{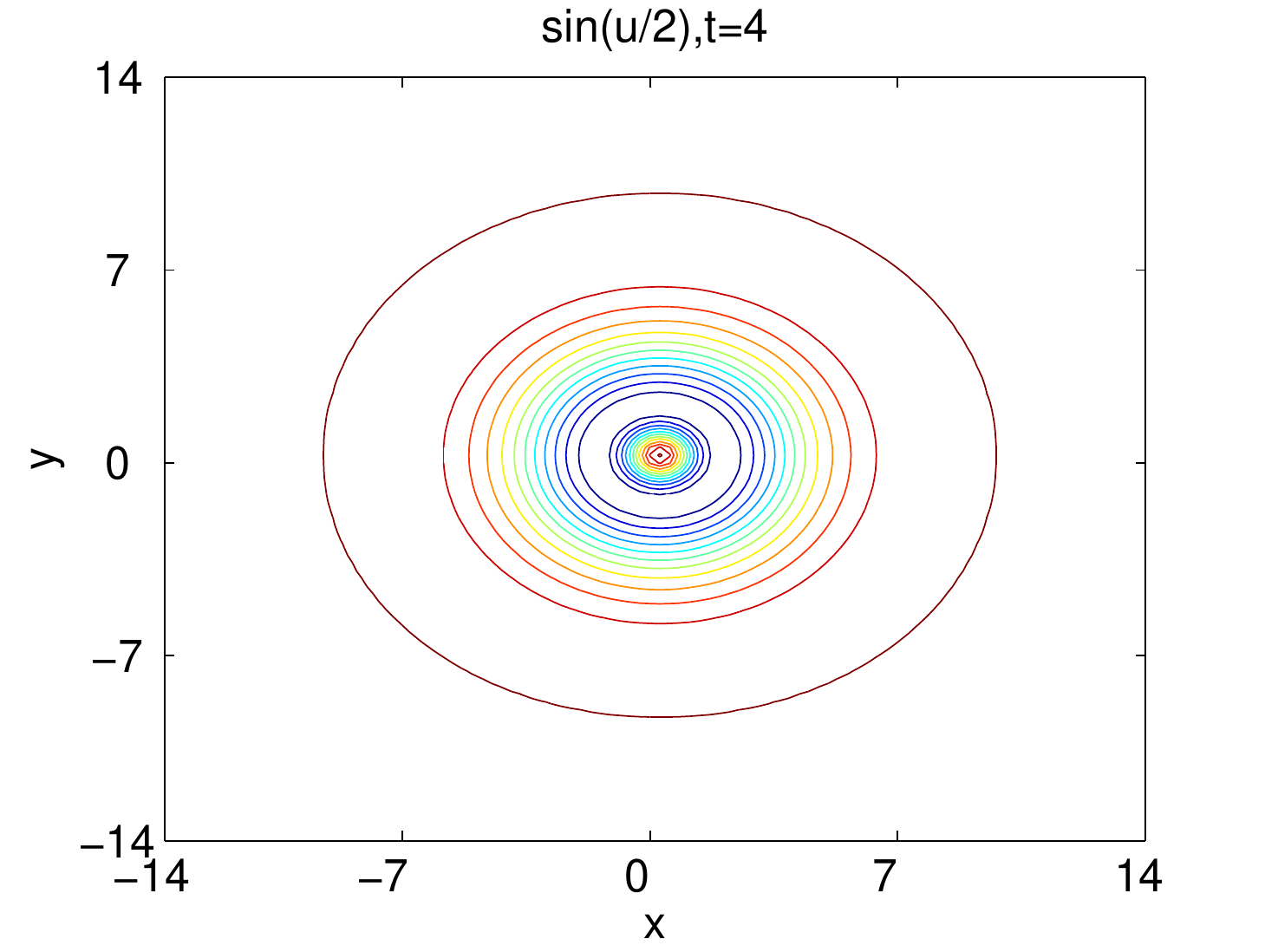}
\end{minipage}
\begin{minipage}[t]{60mm}
\includegraphics[width=60mm]{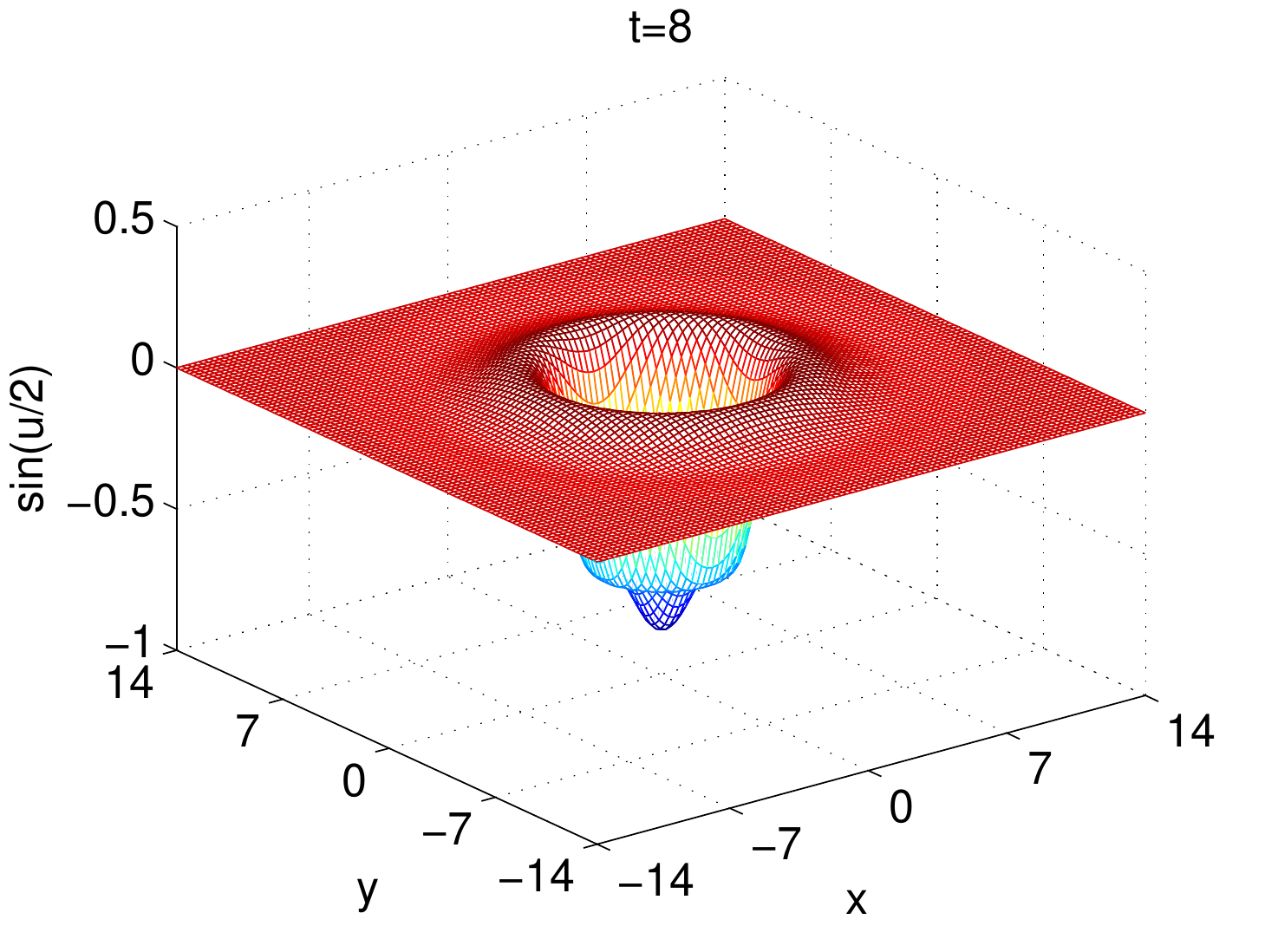}
\end{minipage}
\begin{minipage}[t]{60mm}
\includegraphics[width=60mm]{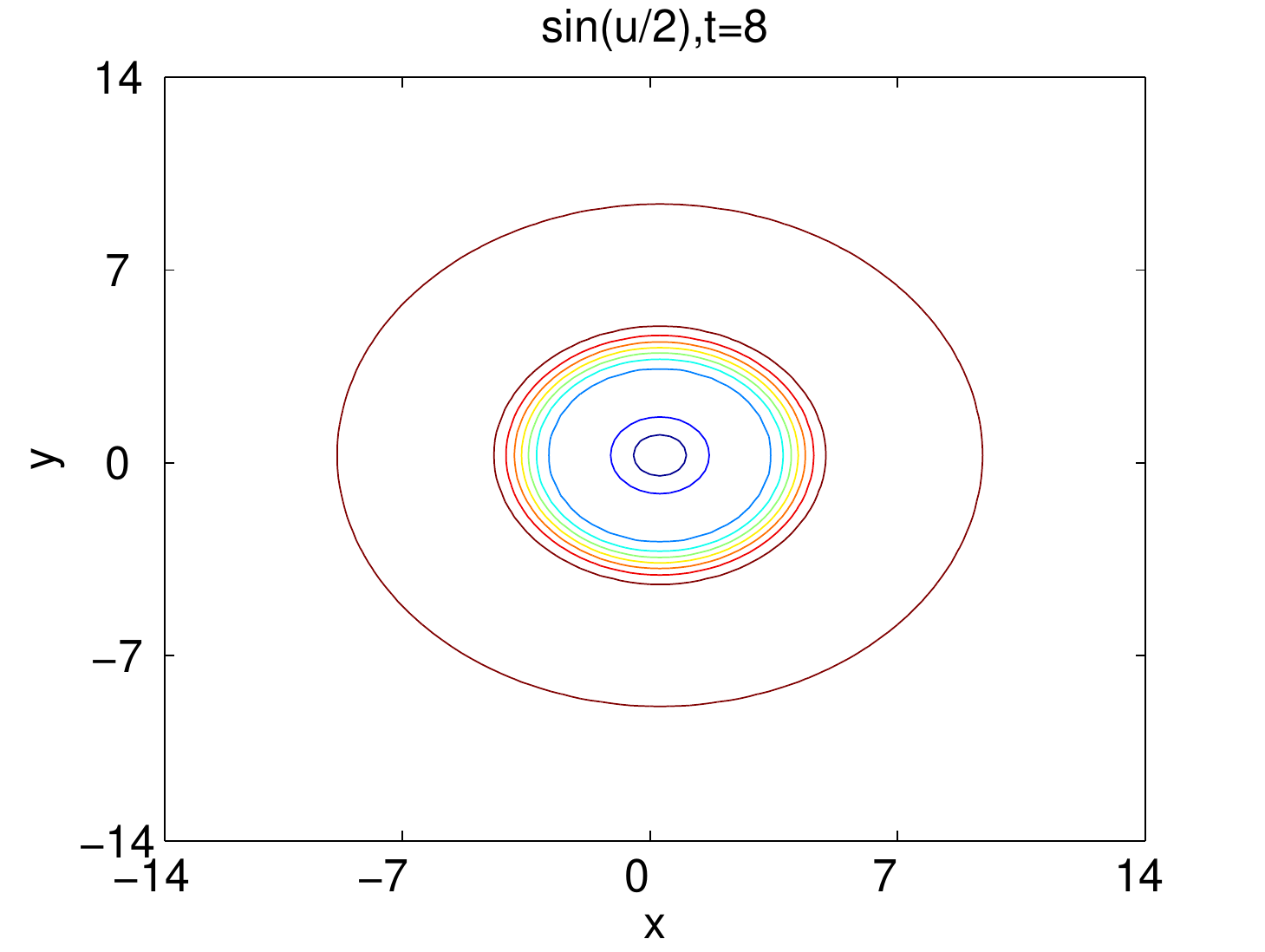}
\end{minipage}
\begin{minipage}[t]{60mm}
\includegraphics[width=60mm]{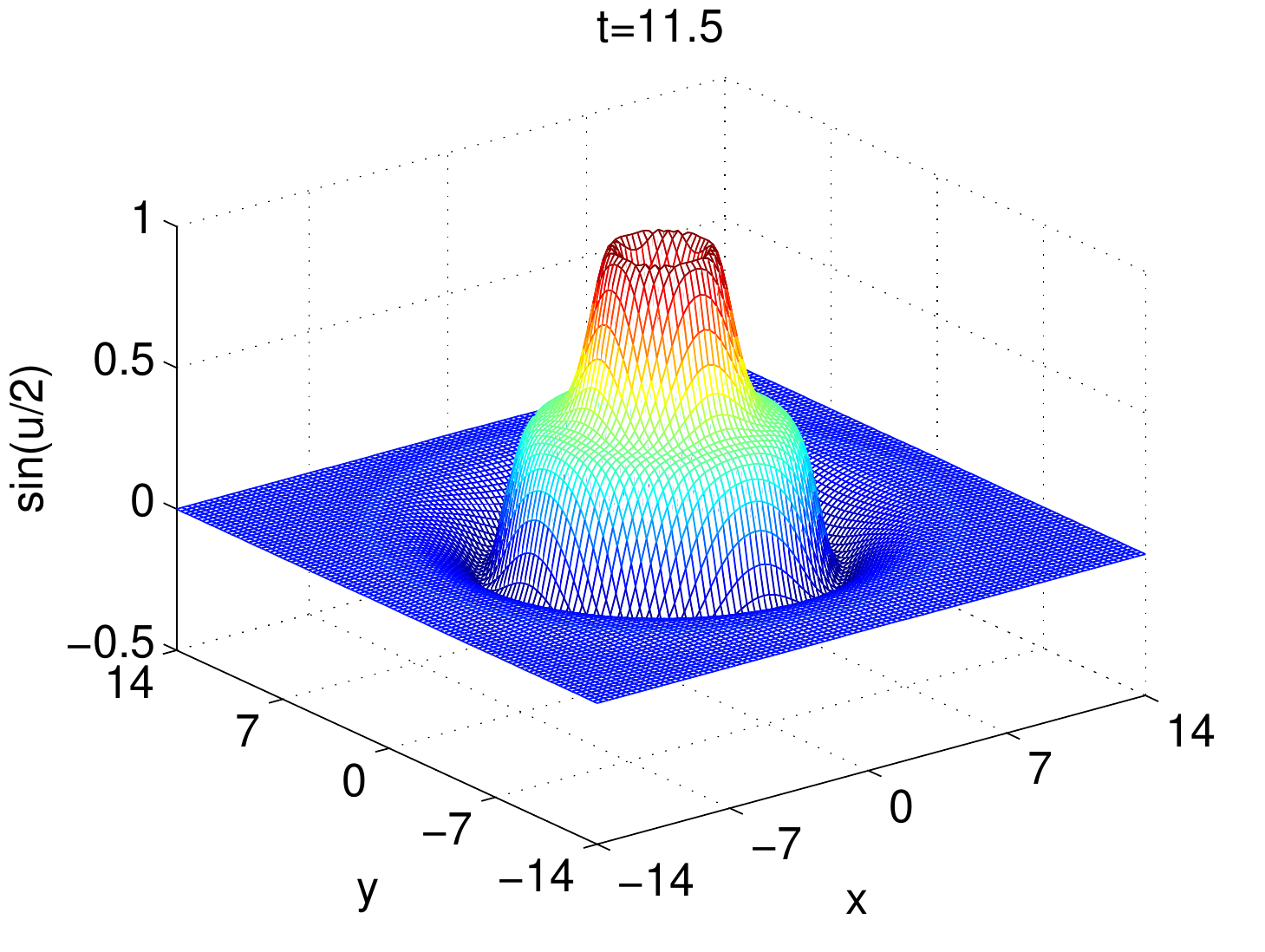}
\end{minipage}
\begin{minipage}[t]{60mm}
\includegraphics[width=60mm]{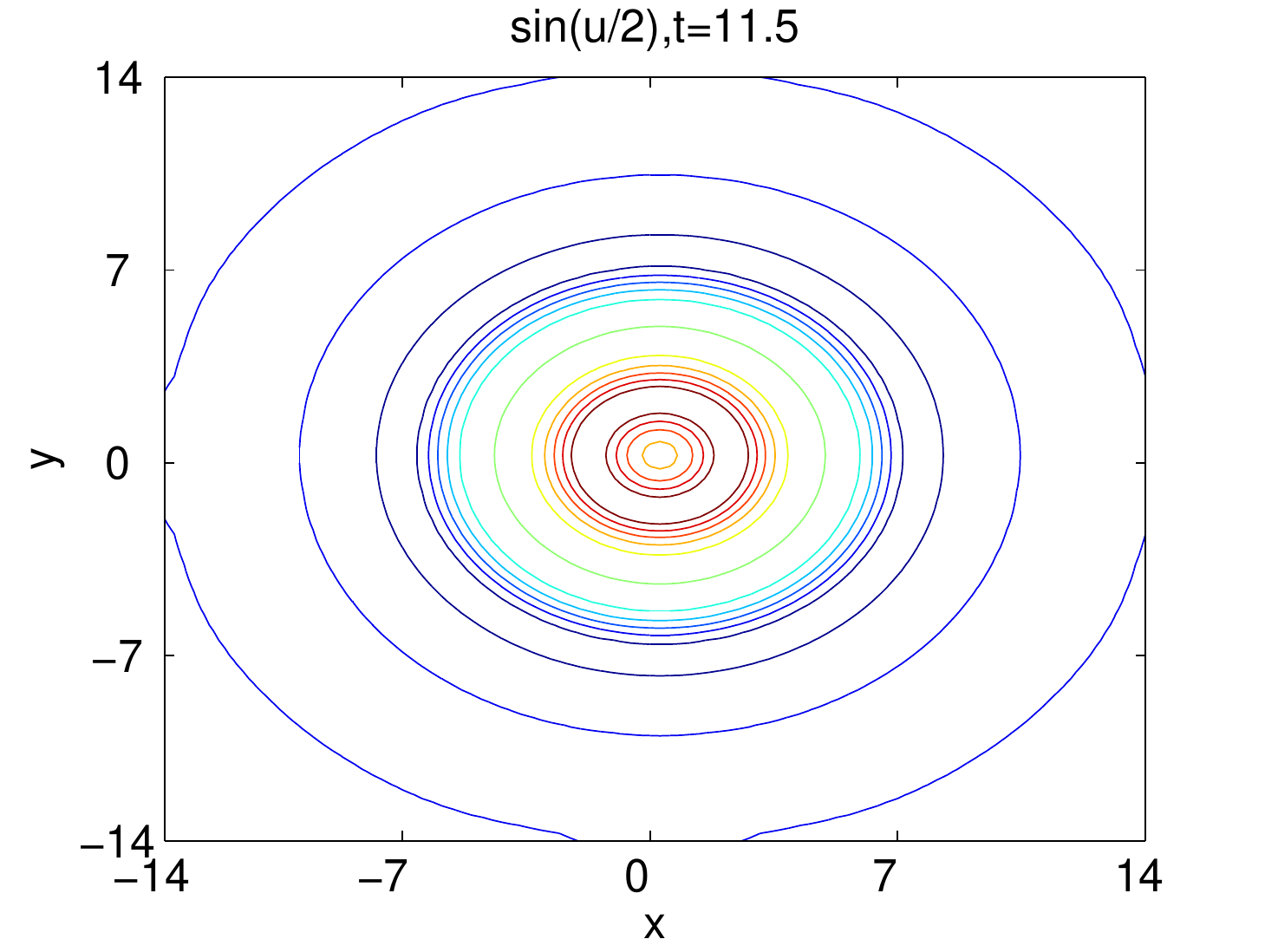}
\end{minipage}
\end{figure}
\begin{figure}
\begin{minipage}[t]{60mm}
\includegraphics[width=60mm]{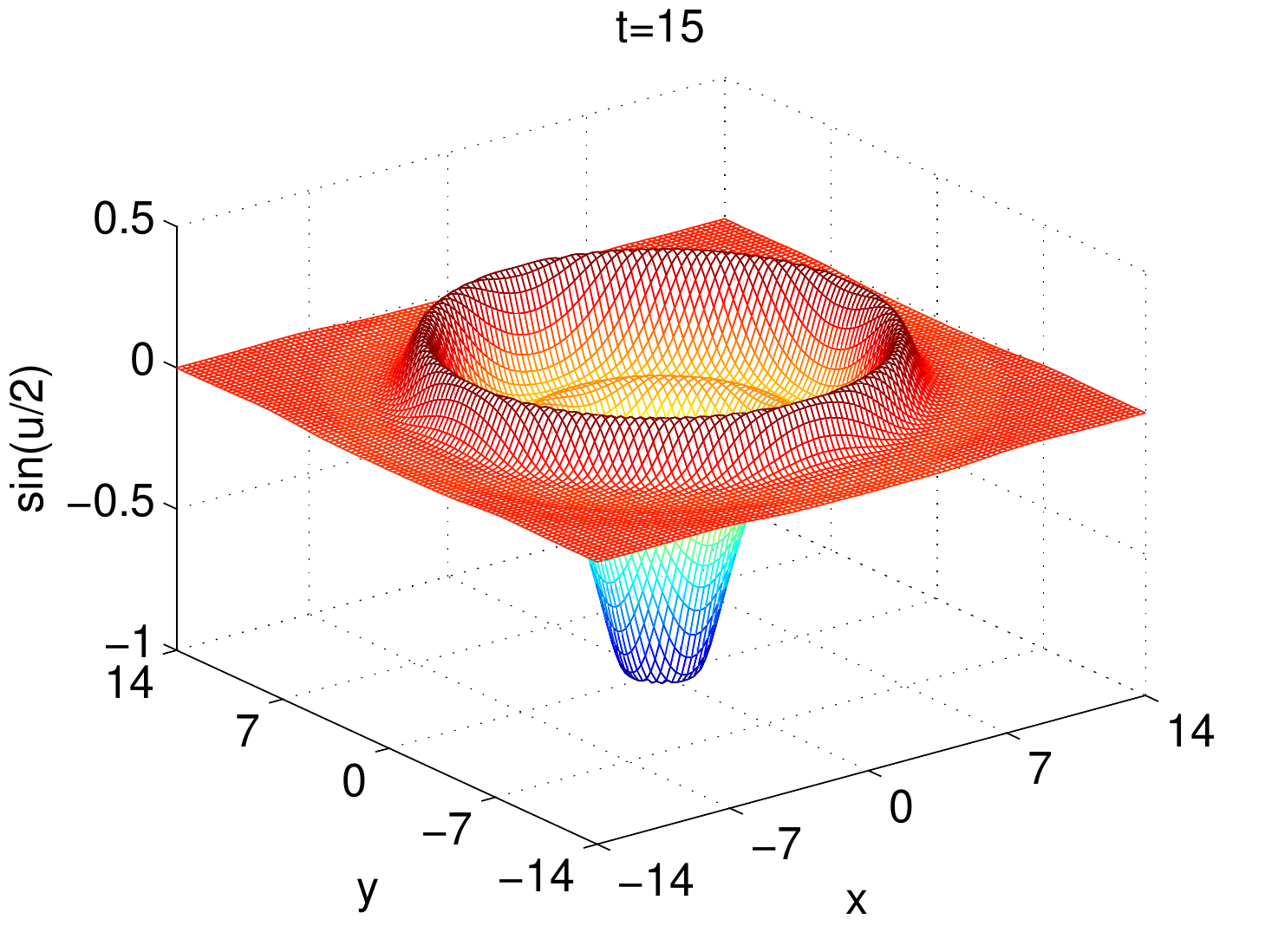}
\end{minipage}
\begin{minipage}[t]{60mm}
\includegraphics[width=60mm]{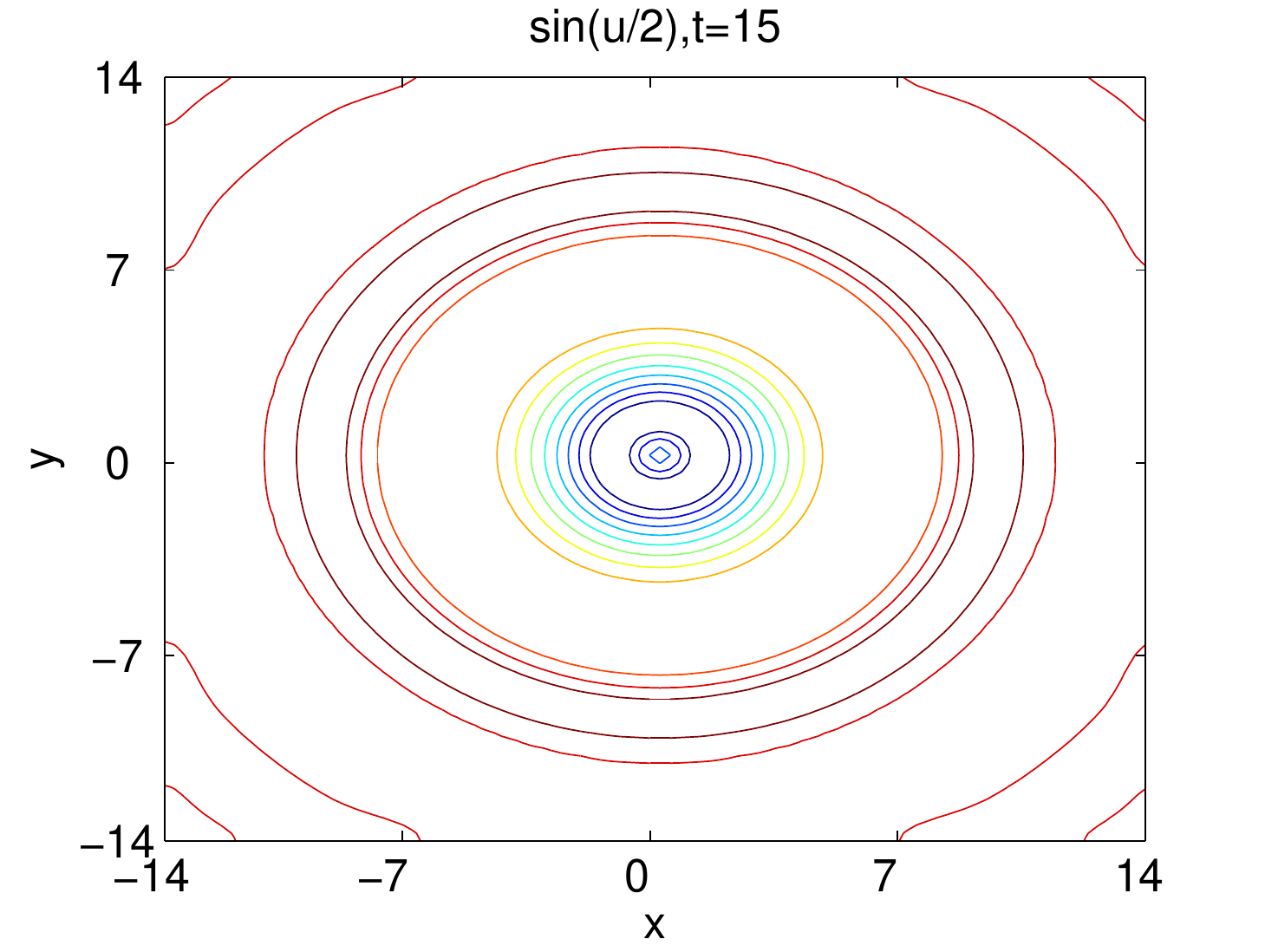}
\end{minipage}
\caption{ Circular ring soliton: surface and contours plots of initial condition and numerical solutions at times $t=0,4,8,11.5$ and $t=15$, respectively, in terms of $\sin(u/2)$. Spatial and temporal mesh sizes are taken as $h=0.14$ and $\tau=0.01$.}\label{2SG:fig2}
\end{figure}

\begin{figure}[H]
\centering\begin{minipage}[t]{70mm}
\includegraphics[width=70mm]{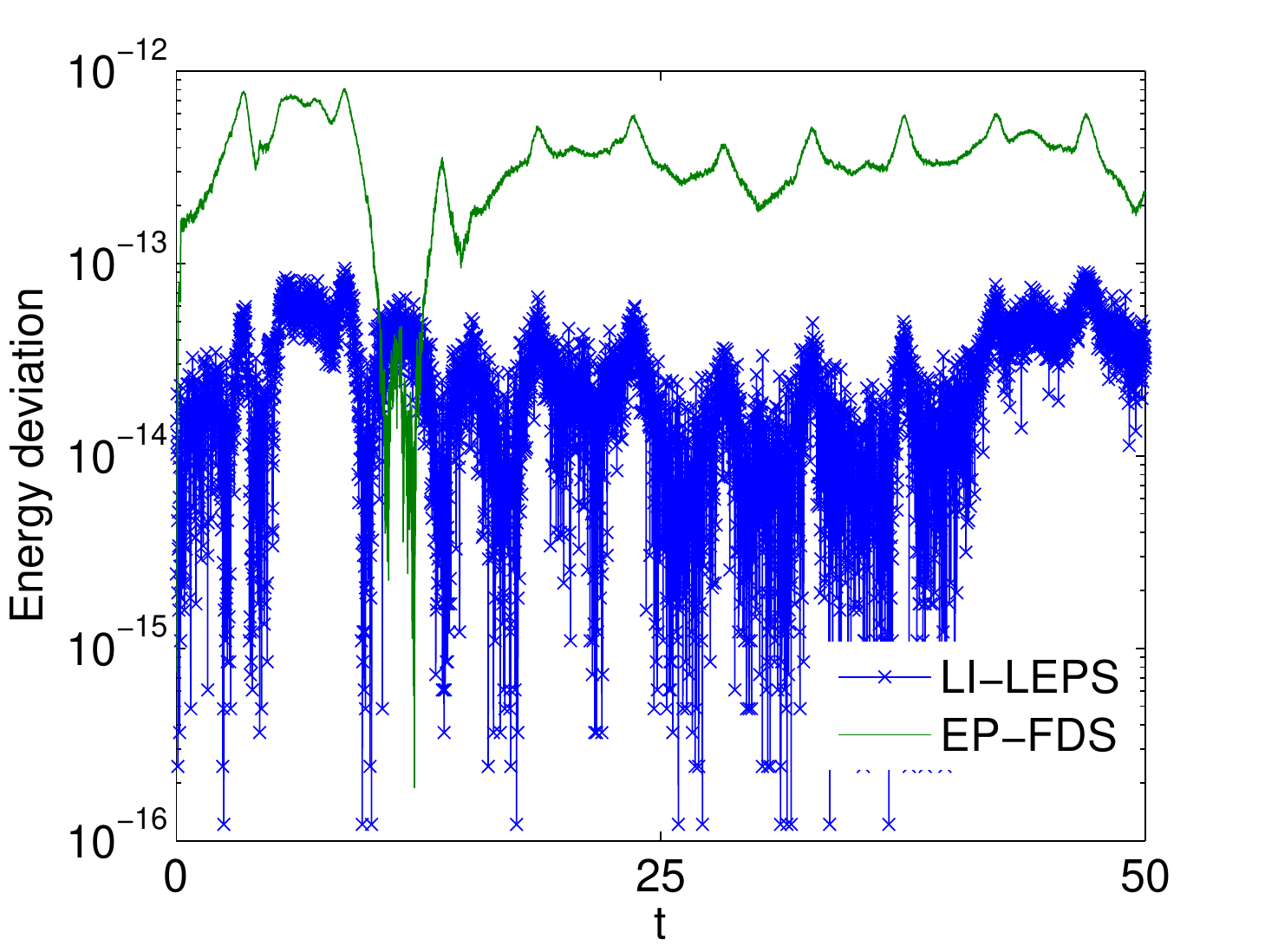}
\end{minipage}
\caption{Energy deviation over the time interval $t\in[0,50]$
with $h=0.14$ and $\tau=0.01$.}\label{2SG:err1}
\end{figure}
\subsubsection{Elliptical breather}
Next, elliptical ring solitons are obtained by the initial conditions \cite{Argyris91,Bratsos07b,CL81}
\begin{align*}
&f(x,y)=4\tan^{-1}\Bigg[2\text{sech}\Bigg(0.866\sqrt{\frac{(x-y)^2}{3}+{\frac{(x+y)^2}{2}}}\Bigg)\Bigg],\ -7\le x,y\le 7,\\
& g(x,y)=0,\ -7\le x,y\le 7,
\end{align*}
and the periodic boundary conditions.

The initial condition and numerical solutions at times $t=0,1.6,8,9.6,11.2,12.8,14.4$ and $t=15.2$, respectively, are displayed in Fig. \ref{EBS:1}. The major axis of the breather from its initial direction $y=-x$ seems to be turned clockwise. Then a shrinking and and a reflexion phase are observed. At $t=11.2$ the major axis has almost recovered its initial direction $y=-x$ but strong oscillations are observed. Finally, at $t=12.8$ an expansion phase is observed. The
results are in agreement with those given in Refs. \cite{Argyris91,Bratsos07b,CL81}. The long time energy deviations of the two
schemes also given in Fig. \ref{2SG:err2}, which demonstrates a consistent result as that in circular ring solitons.
\begin{figure}[H]
\centering\begin{minipage}[t]{60mm}
\includegraphics[width=60mm]{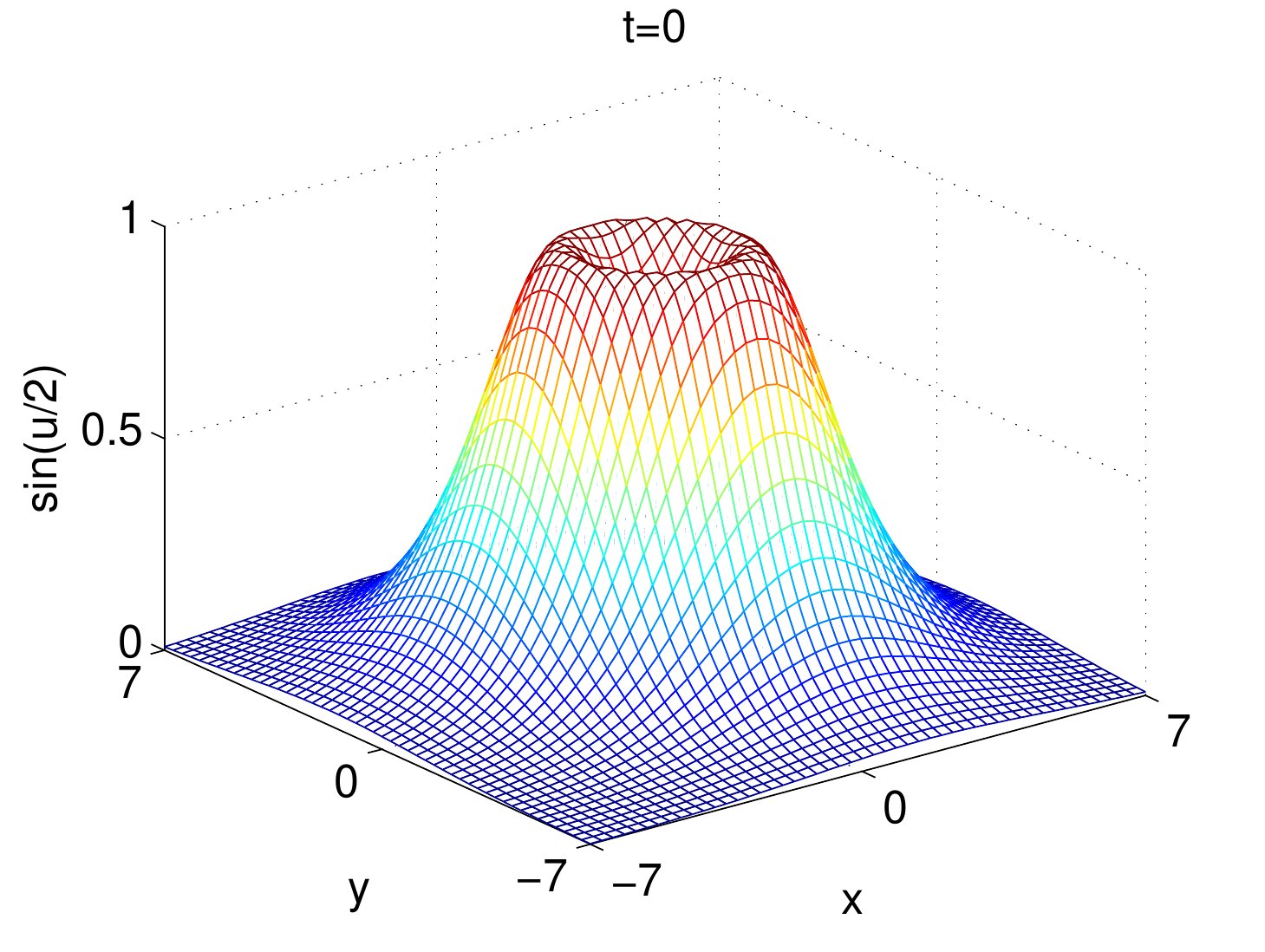}
\end{minipage}
\begin{minipage}[t]{60mm}
\includegraphics[width=60mm]{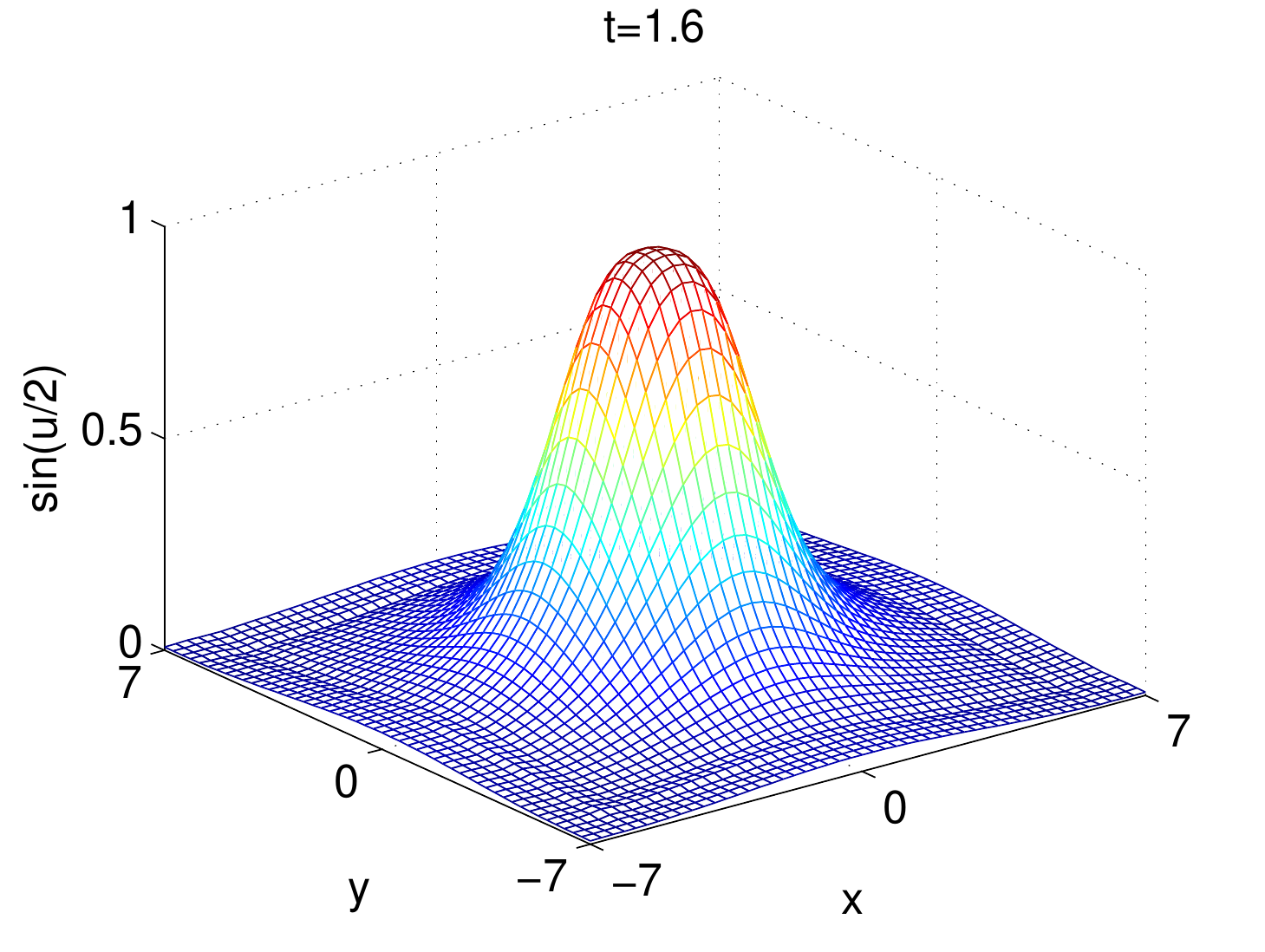}
\end{minipage}
\begin{minipage}[t]{60mm}
\includegraphics[width=60mm]{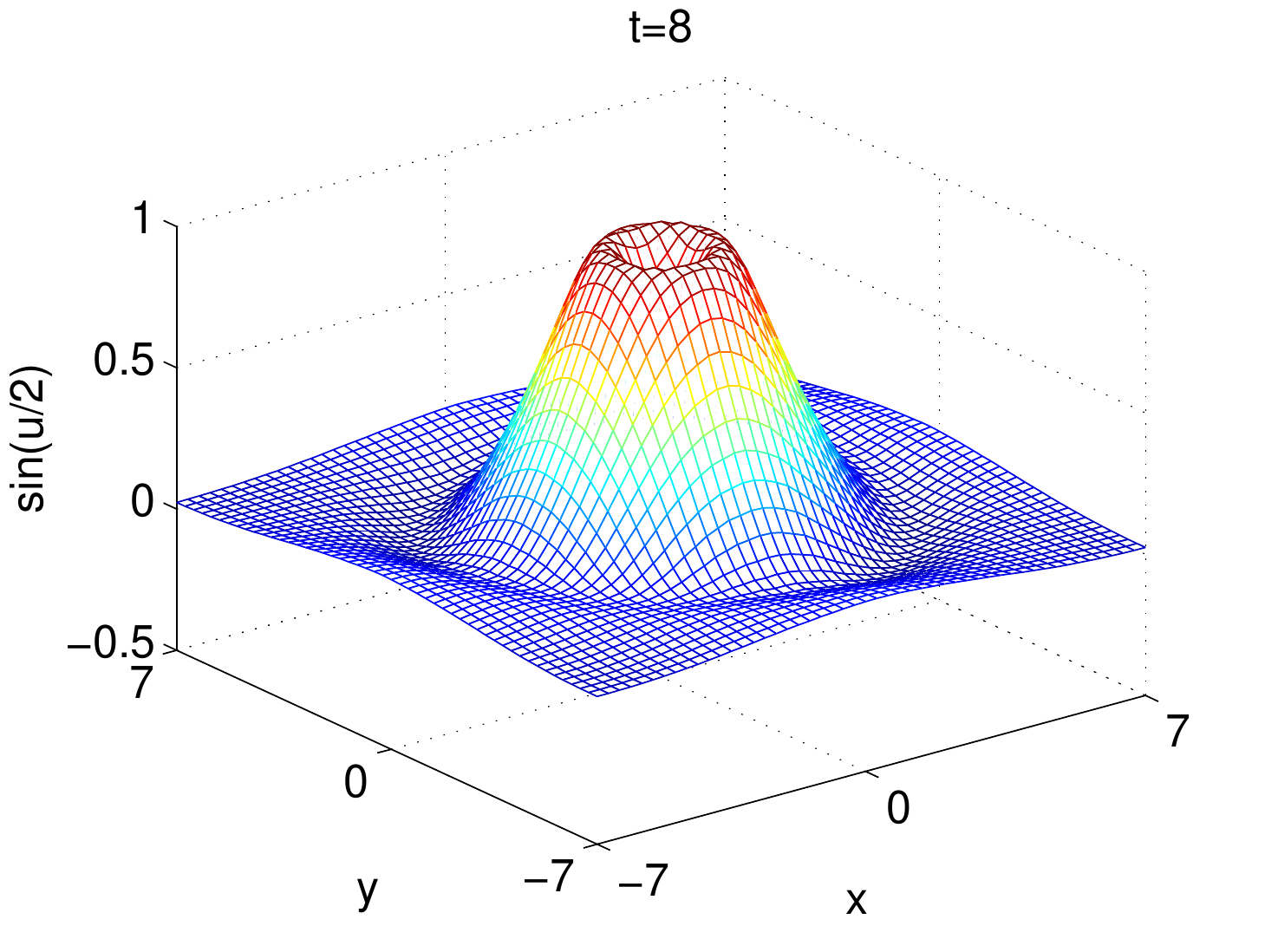}
\end{minipage}
\begin{minipage}[t]{60mm}
\includegraphics[width=60mm]{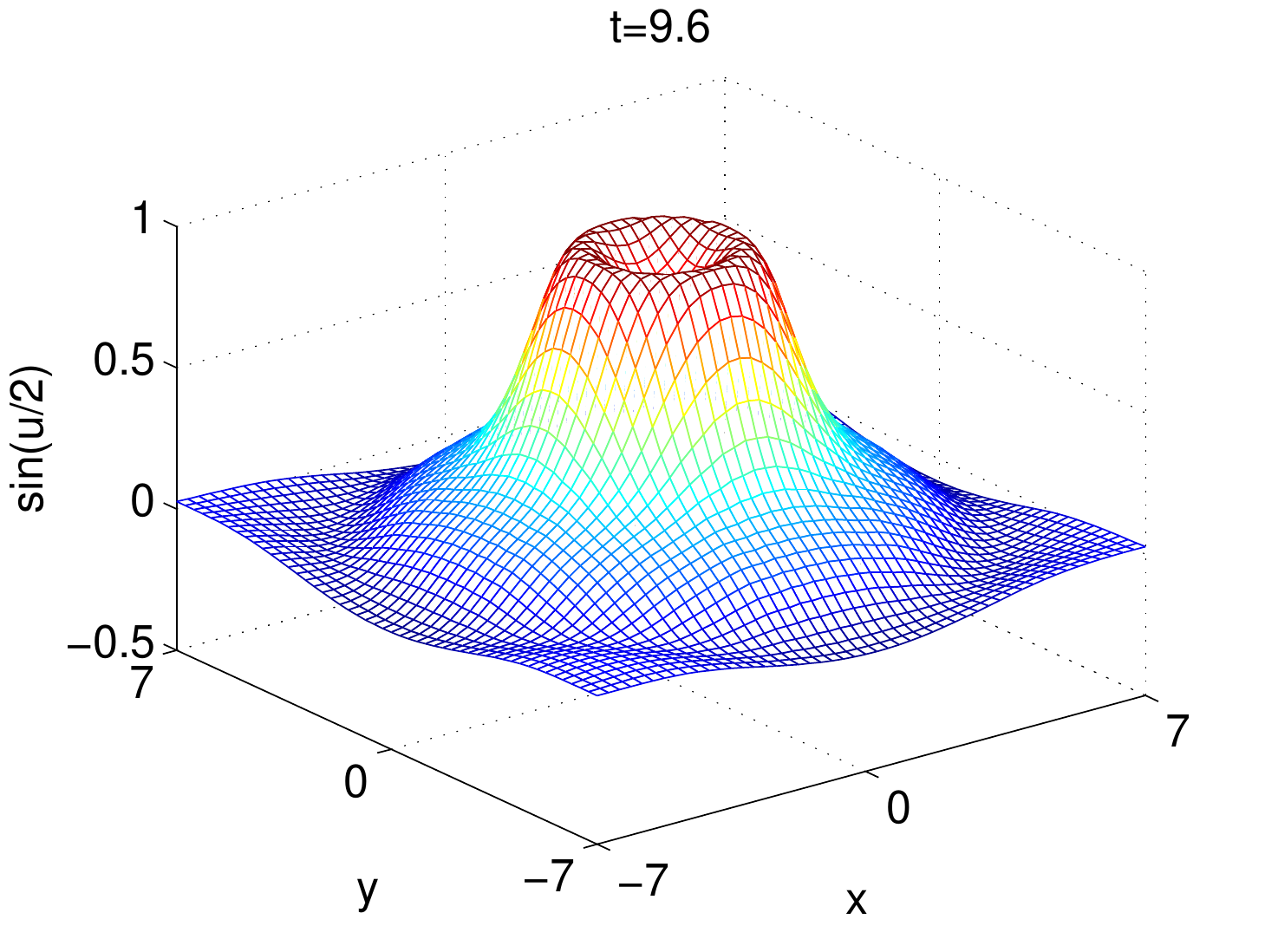}
\end{minipage}
\begin{minipage}[t]{60mm}
\includegraphics[width=60mm]{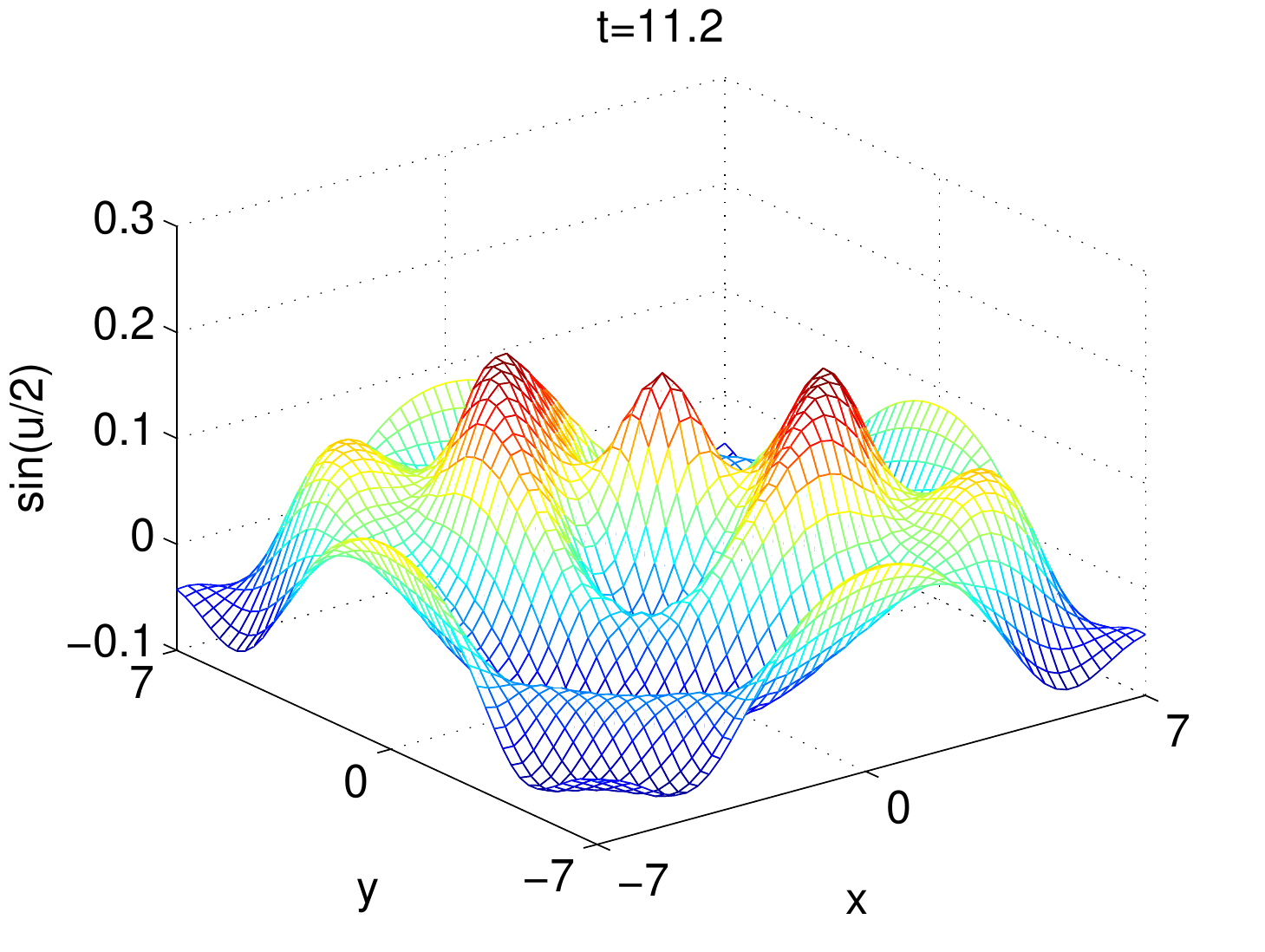}
\end{minipage}\begin{minipage}[t]{60mm}
\includegraphics[width=60mm]{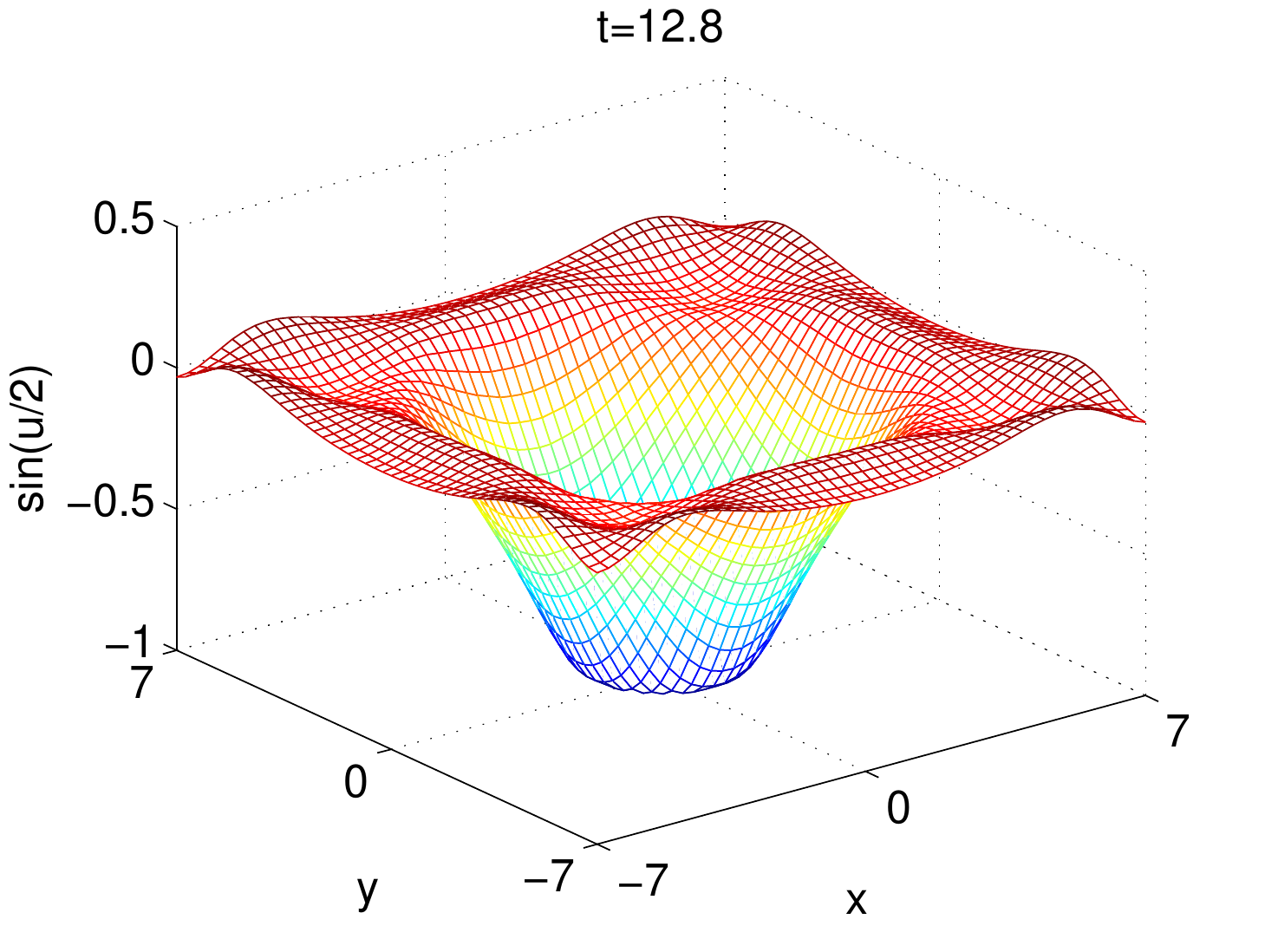}
\end{minipage}
\begin{minipage}[t]{60mm}
\includegraphics[width=60mm]{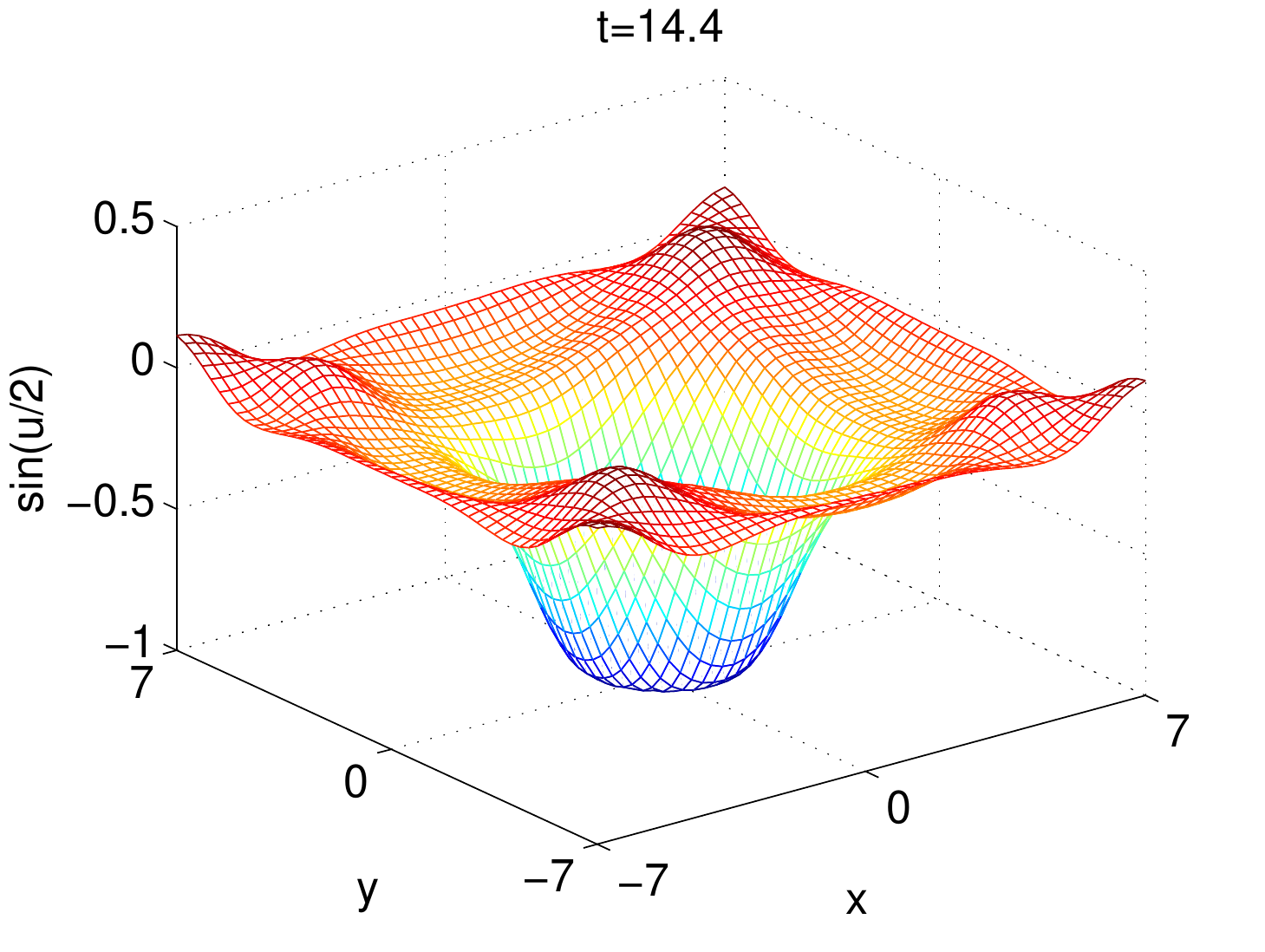}
\end{minipage}\begin{minipage}[t]{60mm}
\includegraphics[width=60mm]{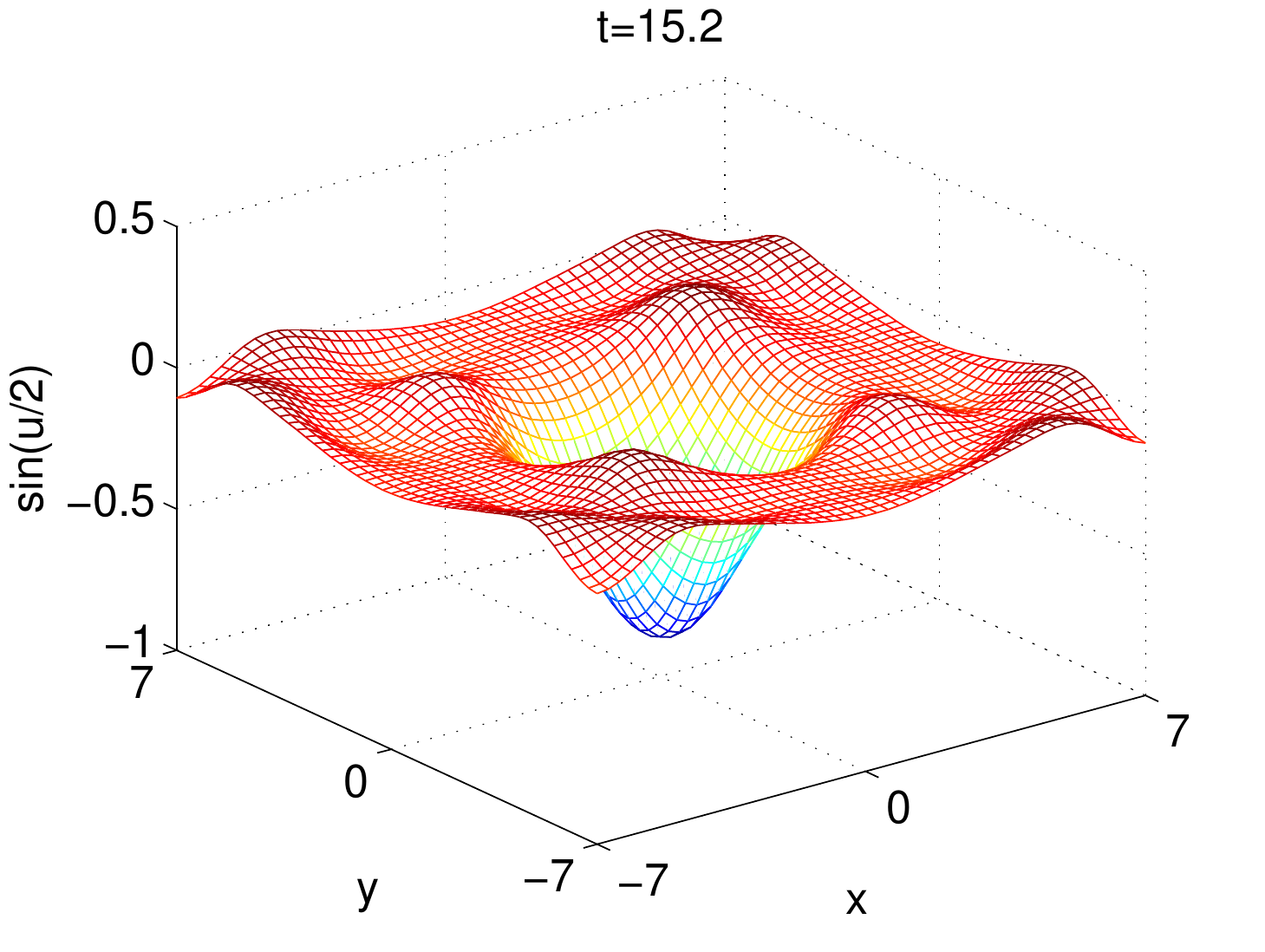}
\end{minipage}
\caption{Elliptical breather solitons: surface plots of initial condition and numerical solutions at times $t=0,1.6,8,9.6,11.2,12.8,14.4$ and $t=15.2$, respectively, in terms of $\sin(u/2)$. Spatial and temporal mesh sizes are taken as $h=0.14$ and $\tau=0.01$.}\label{EBS:1}
\end{figure}
\begin{figure}[H]
\centering\begin{minipage}[t]{70mm}
\includegraphics[width=70mm]{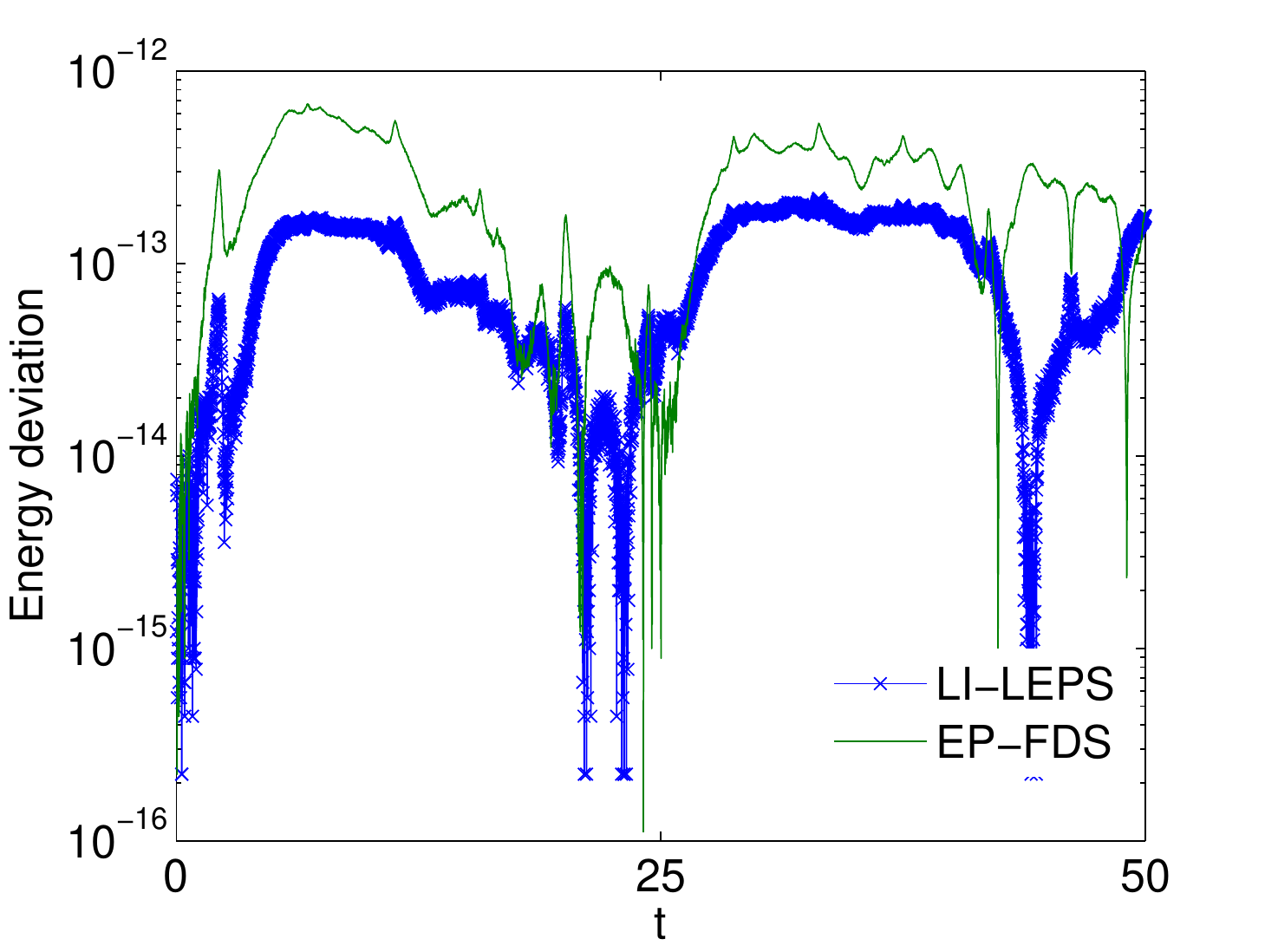}
\end{minipage}
\caption{Energy deviation over the time interval $t\in[0,50]$
with $h=0.14$ and $\tau=0.01$.}\label{2SG:err2}
\end{figure}

\subsubsection{Collisions of two circular solitons}

Subsequently, we consider the collisions of two expanding circular solitons by setting the initial conditions \cite{CL81,DPT95,KAS00,SKV10}
\begin{align*}
&f(x,y)=4\tan^{-1}\Bigg[\exp\Big(\frac{4-\sqrt{(x+3)^{2}+(y+7)^{2}}}{0.436}\Big)\Bigg],\ -30\le x\le 10,\ -21\le y\le 7,\\
& g(x,y)=4.13\text{sech}\Big(\frac{4-\sqrt{(x+3)^{2}+(y+7)^{2}}}{0.436}\Big), \ -30\le x\le 10,\ -21\le y\le 7,
\end{align*}
and the periodic boundary conditions.

Fig. \ref{2SG:fig3} shows the collision of two expanding circular solitons.
The solution shown includes the extension across $x =-10$ and $y =-7$ by symmetry properties of the problem \cite{DPT95}.
As illustrated in the figure, the collision between two expanding circular ring solitons in which two smaller ring
solitons bounding an annular region emerge into a large ring one. The movement of solitons can be
  more clearly observed in contour maps. The results are in good agreement with those published in Refs. \cite{CL81,DPT95,LiuW17,SKV10}. The results of energy
conservation are presented in Fig. \ref{2SG:err2}, which indicates that LI-LEPS shows a better conservation of energy than EP-FDS.

\begin{figure}[H]
\centering\begin{minipage}[t]{60mm}
\includegraphics[width=60mm]{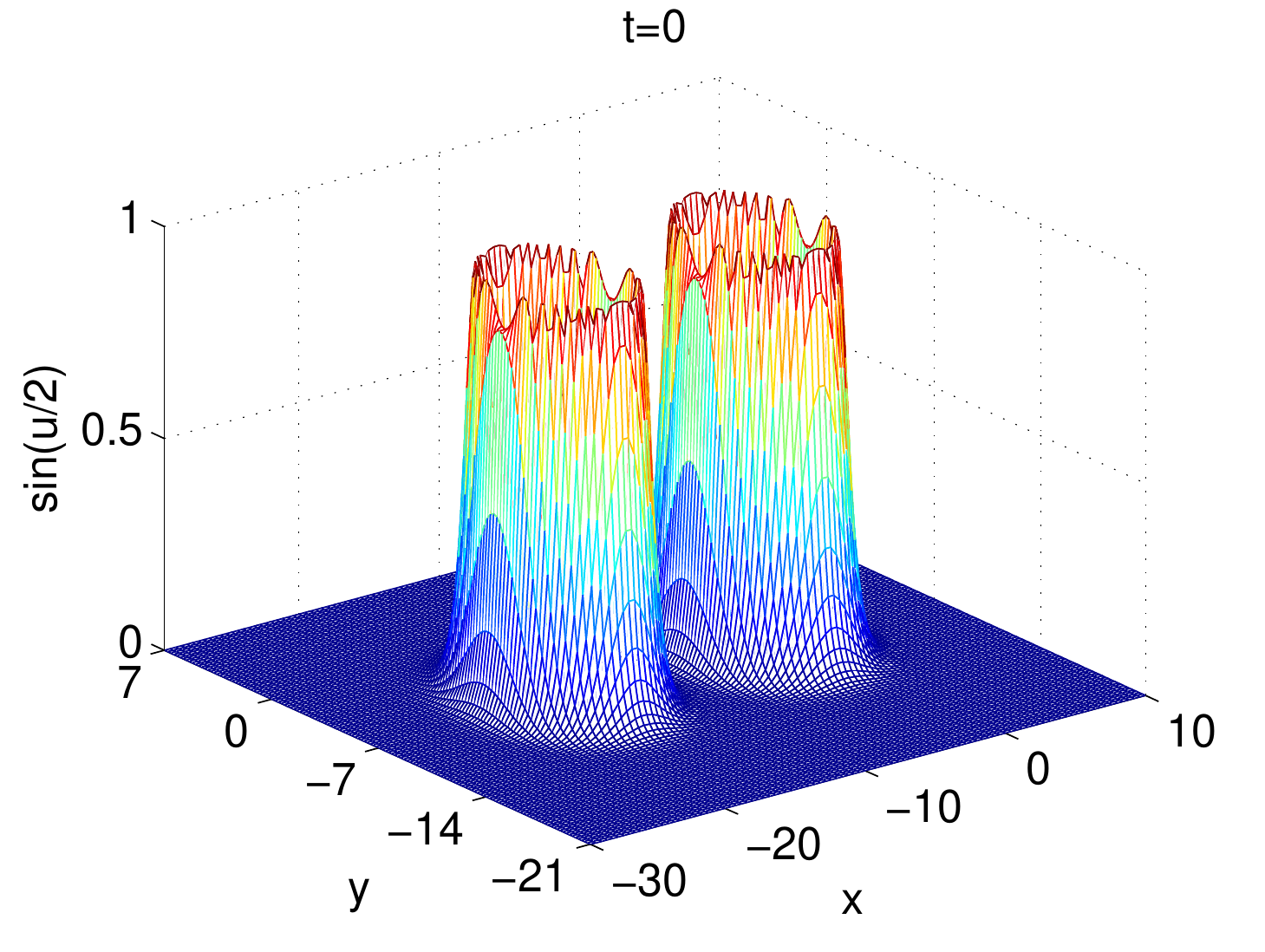}
\end{minipage}
\begin{minipage}[t]{60mm}
\includegraphics[width=60mm]{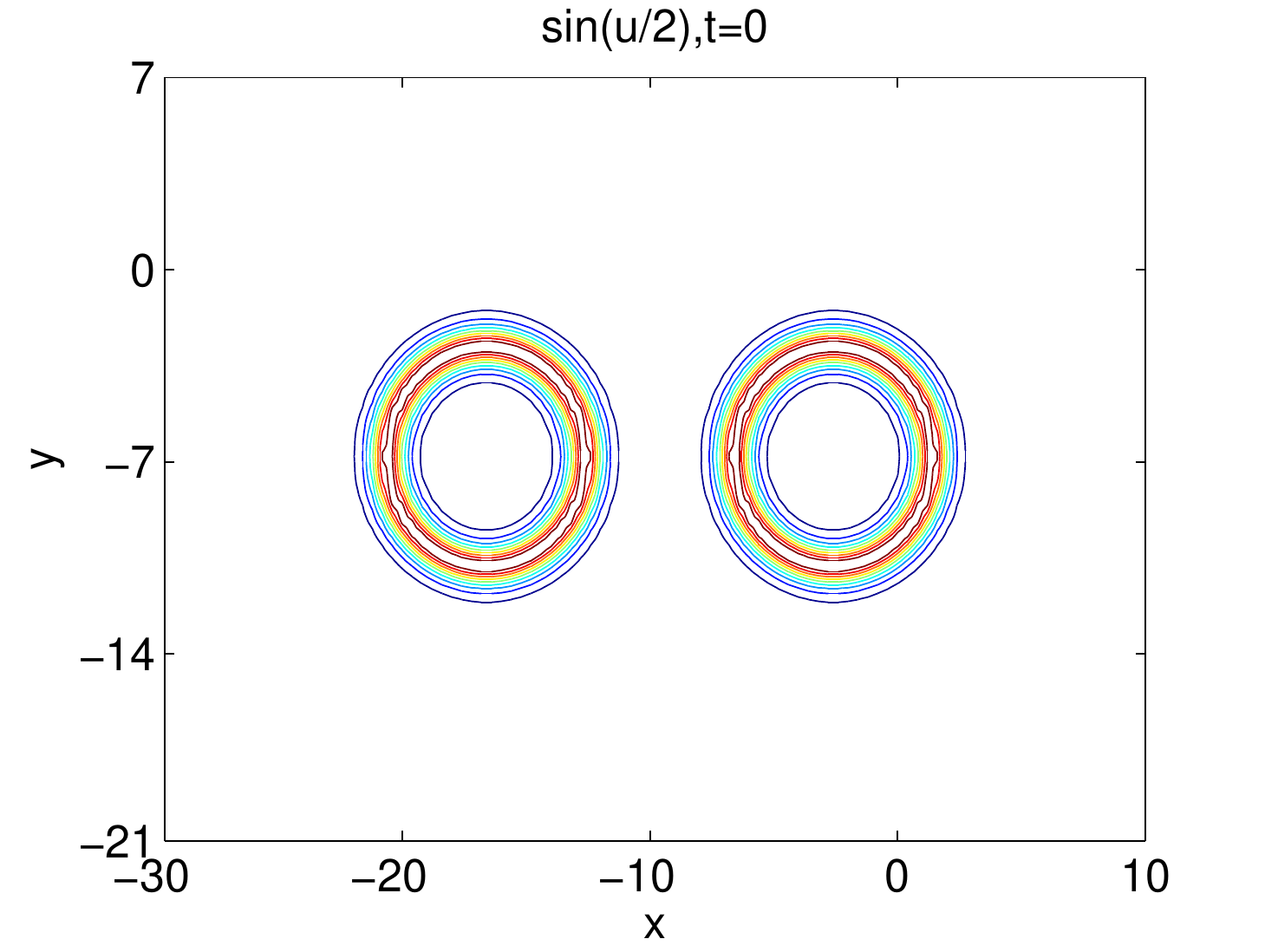}
\end{minipage}
\begin{minipage}[t]{60mm}
\includegraphics[width=60mm]{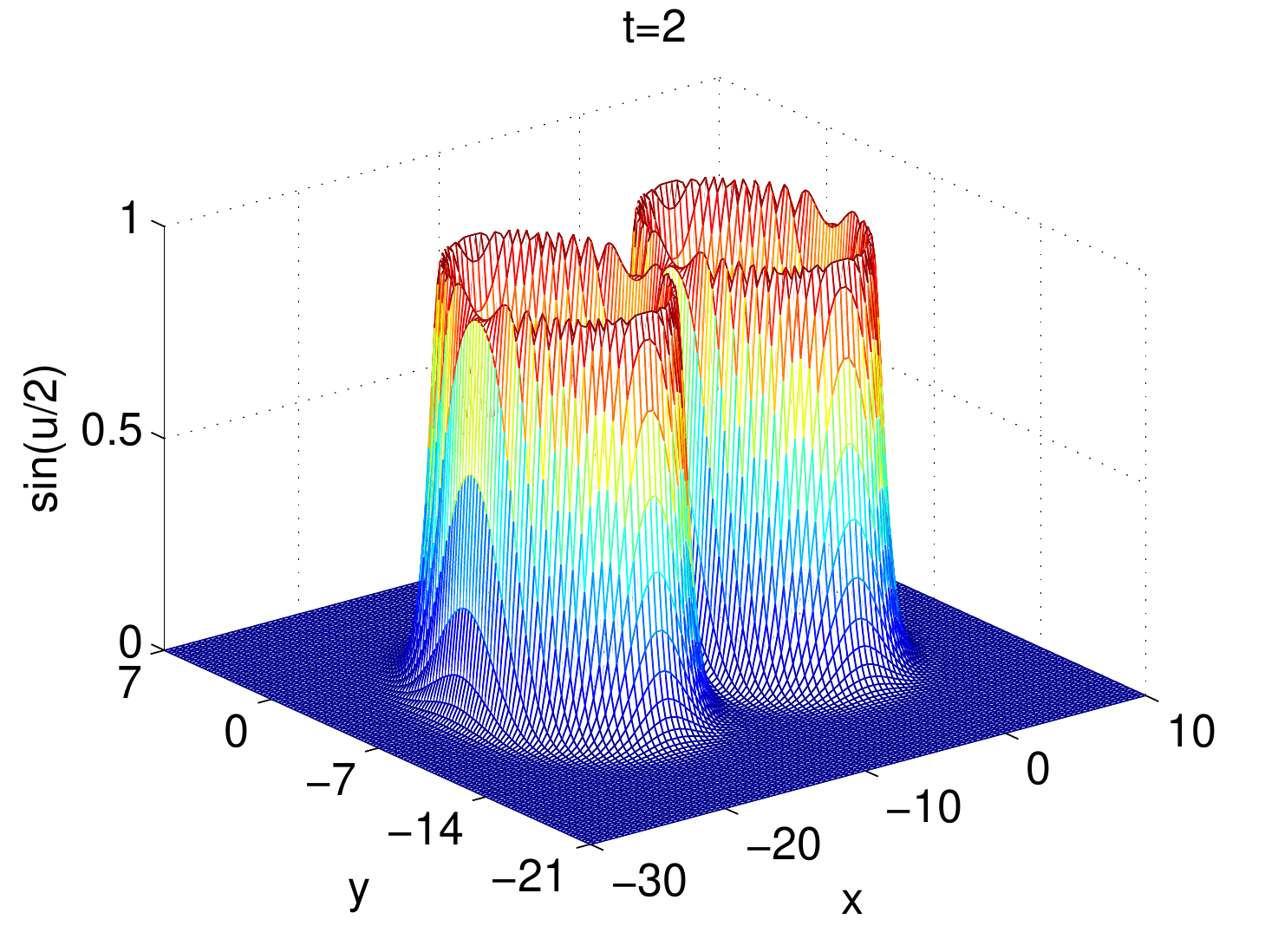}
\end{minipage}
\begin{minipage}[t]{60mm}
\includegraphics[width=60mm]{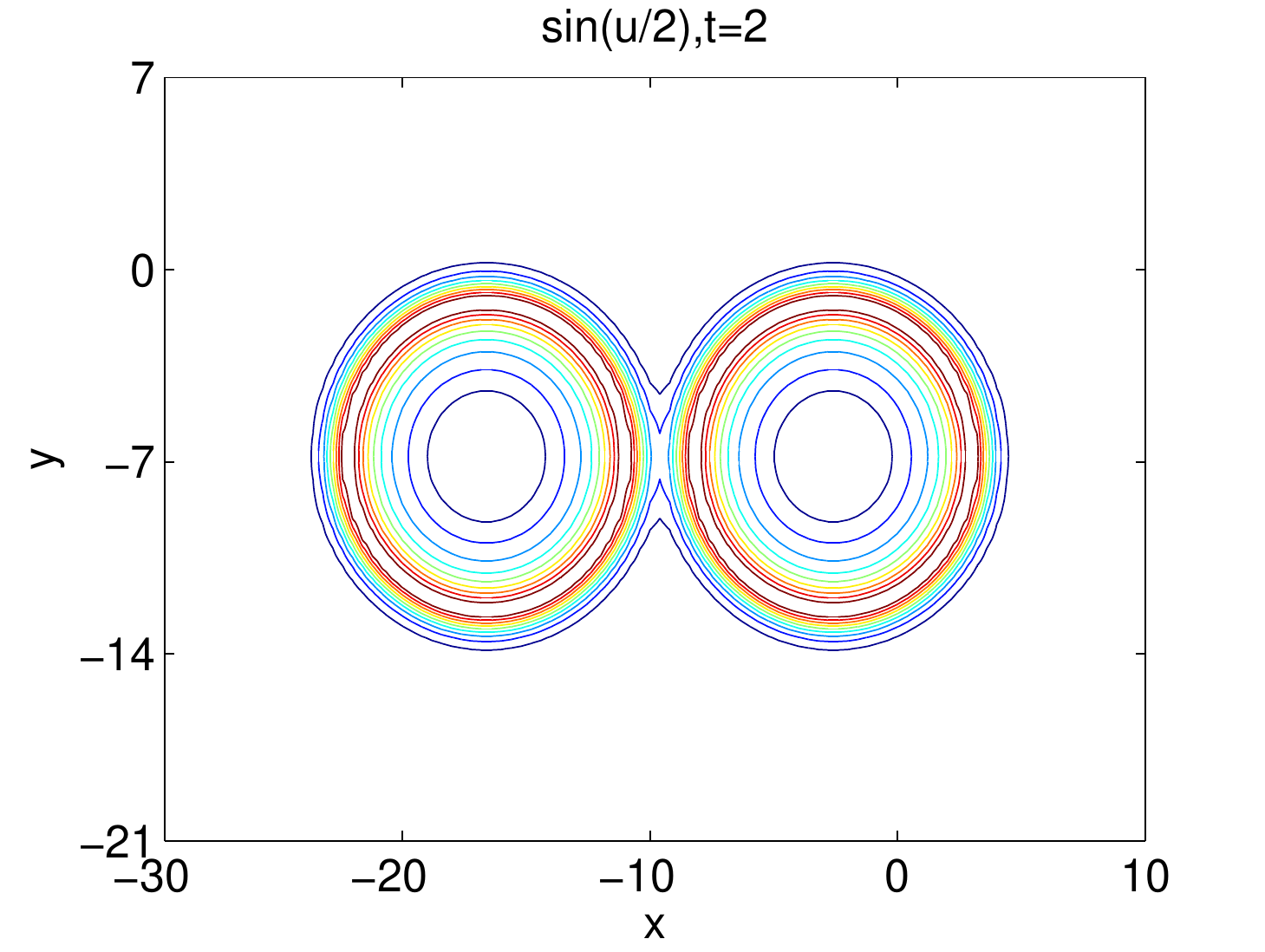}
\end{minipage}
\begin{minipage}[t]{60mm}
\includegraphics[width=60mm]{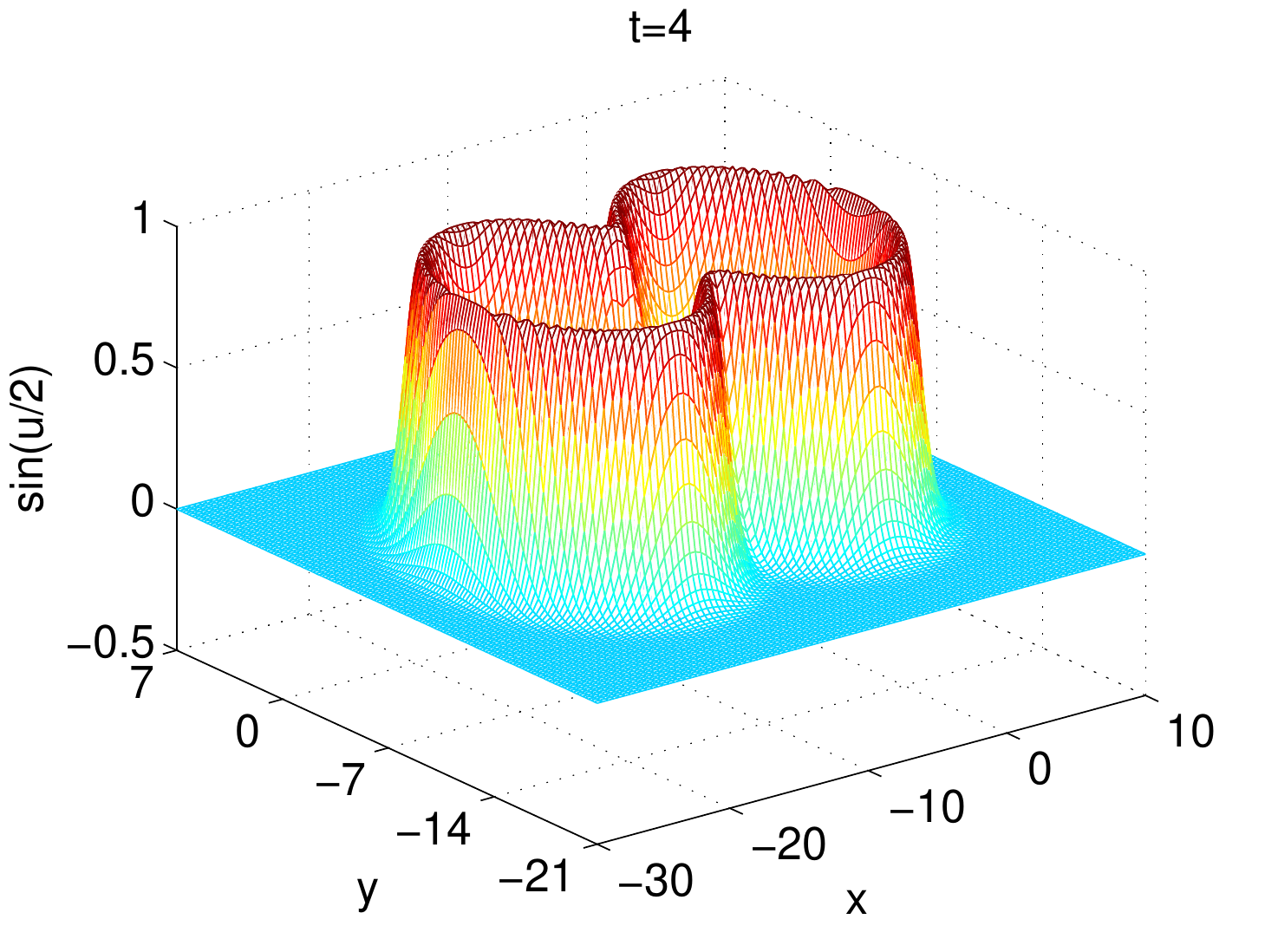}
\end{minipage}
\begin{minipage}[t]{60mm}
\includegraphics[width=60mm]{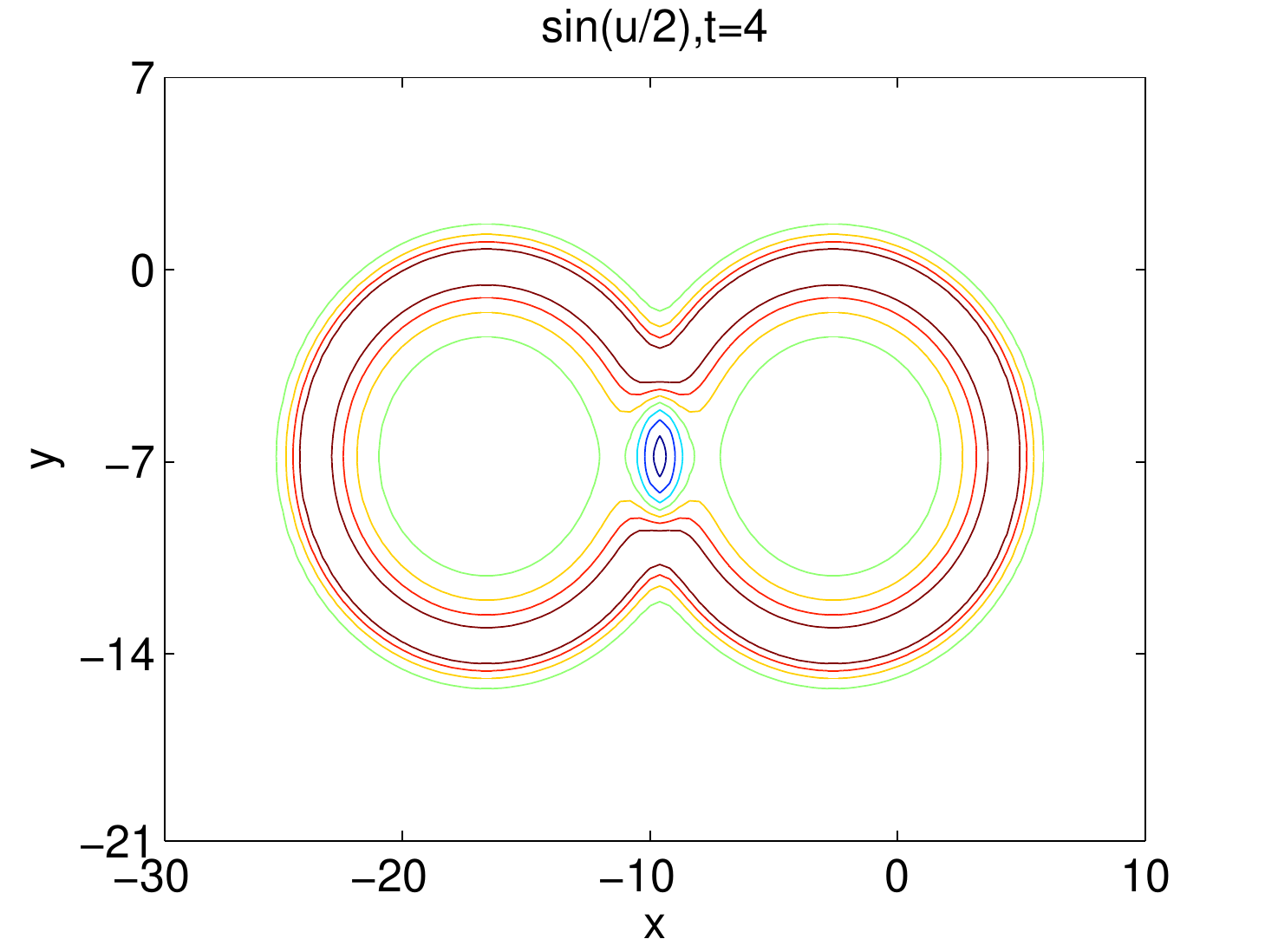}
\end{minipage}
\begin{minipage}[t]{60mm}
\includegraphics[width=60mm]{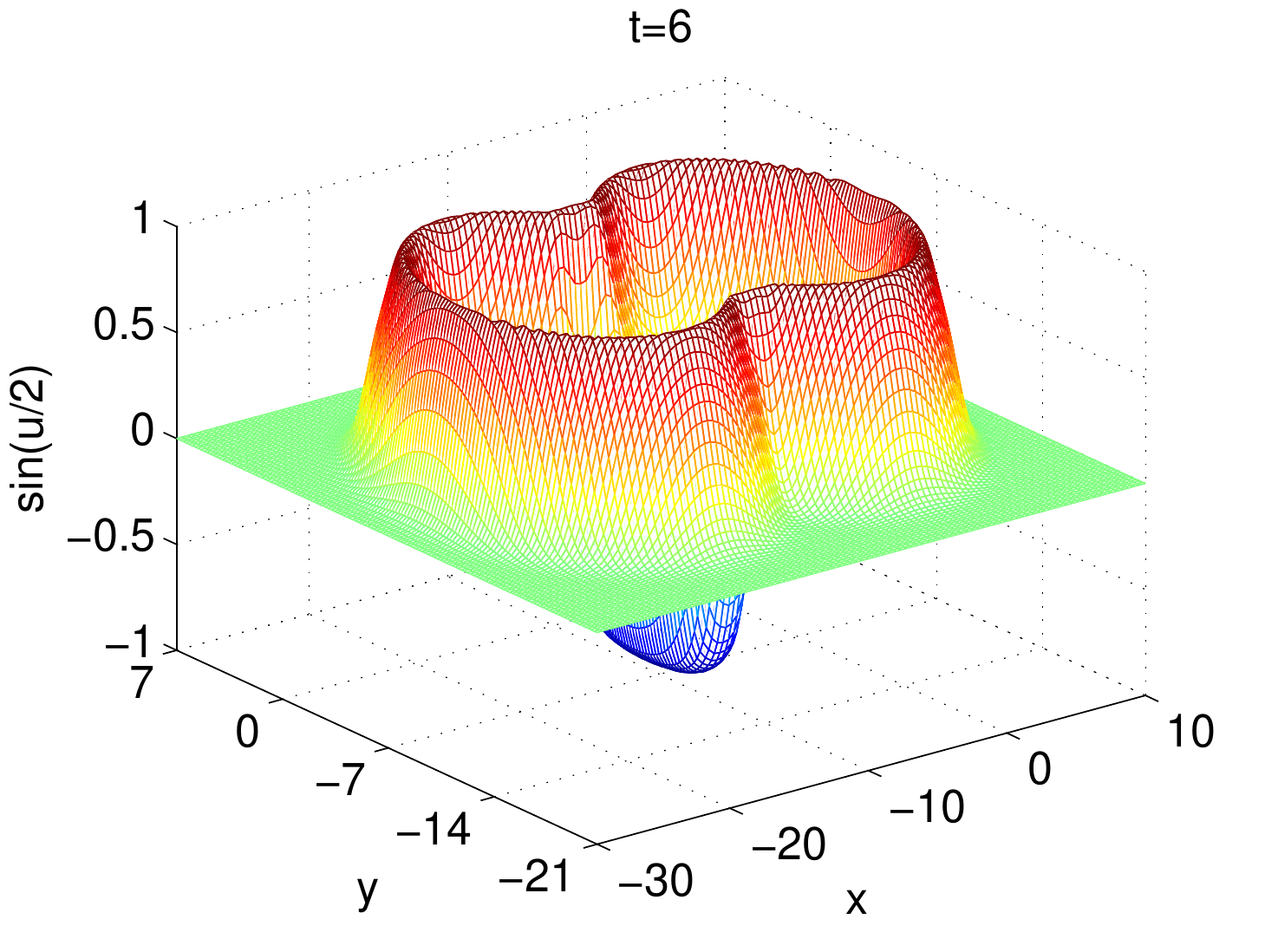}
\end{minipage}
\begin{minipage}[t]{60mm}
\includegraphics[width=60mm]{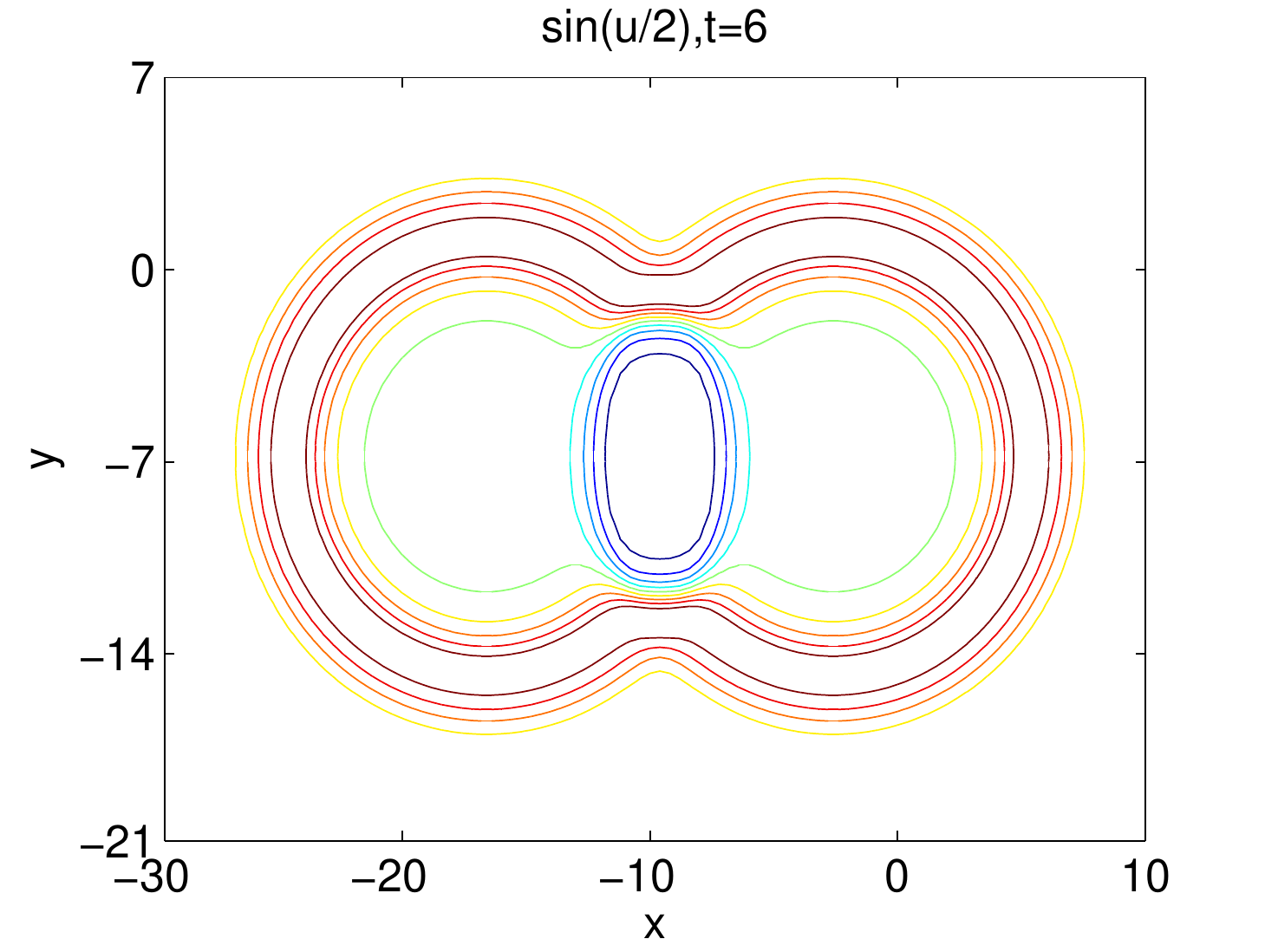}
\end{minipage}
\begin{minipage}[t]{60mm}
\includegraphics[width=60mm]{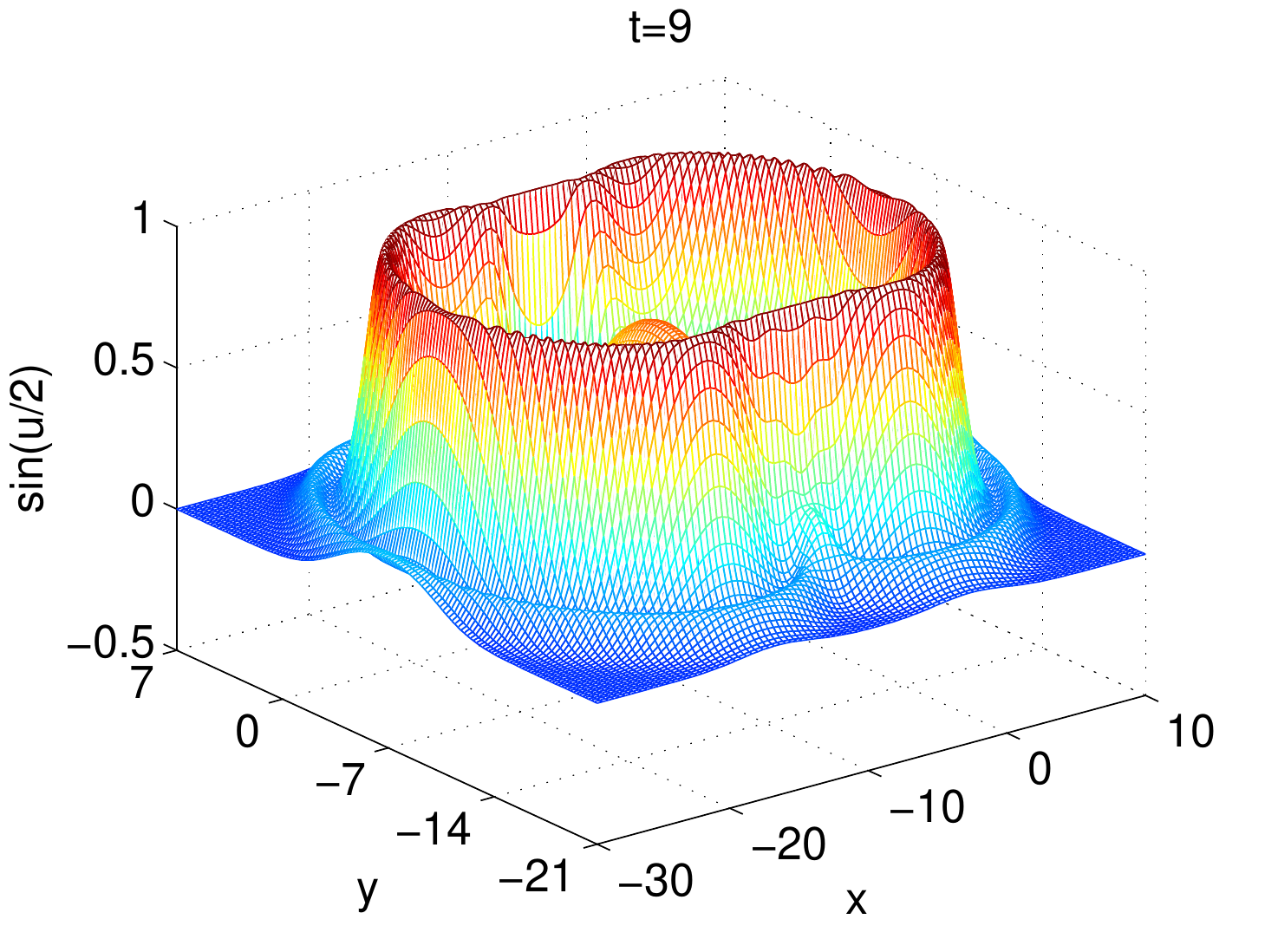}
\end{minipage}
\begin{minipage}[t]{60mm}
\includegraphics[width=60mm]{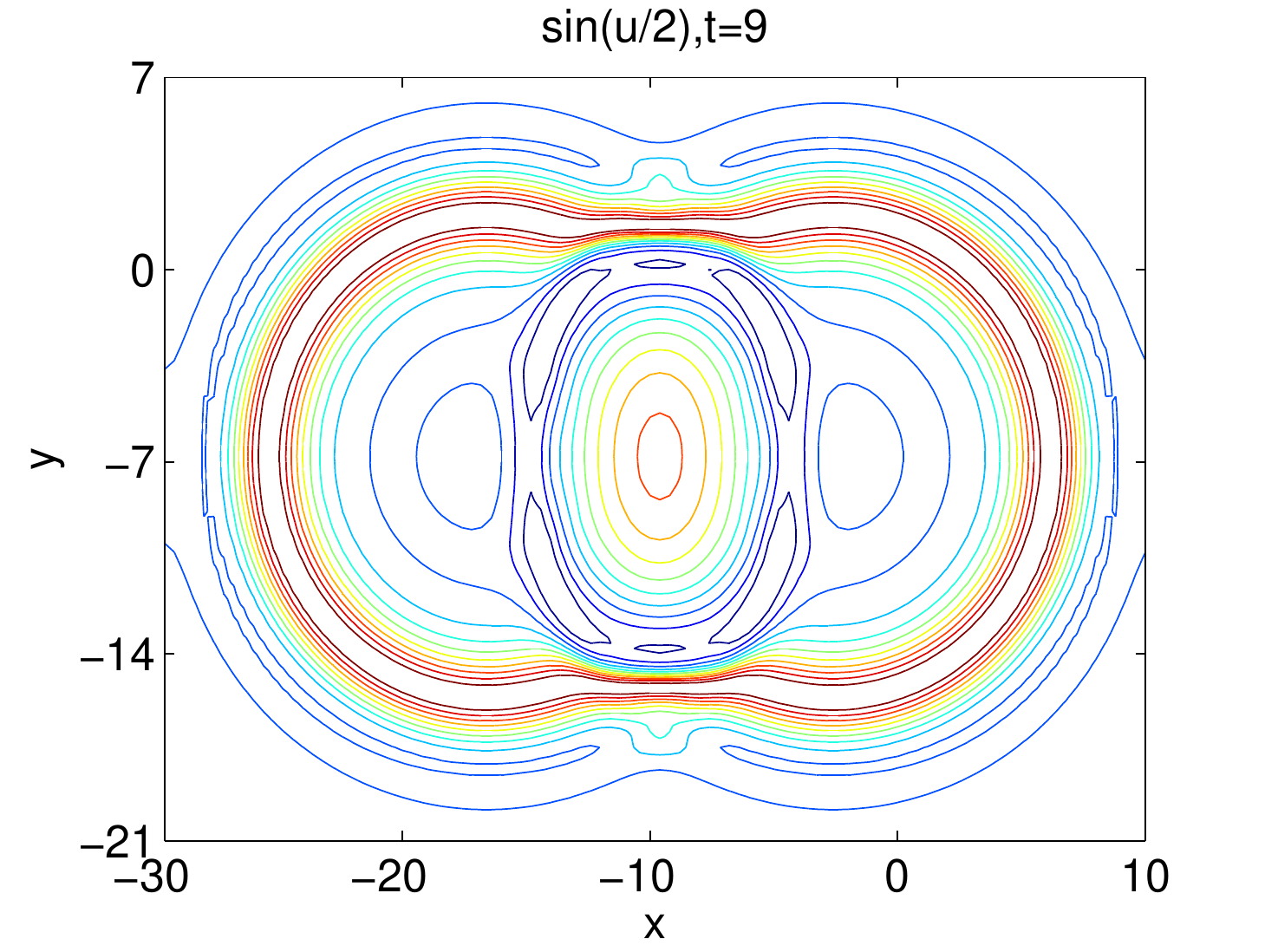}
\end{minipage}
\caption{Collision of two expanding ring solitons: surface and contours plots of initial condition and numerical solutions at times $t=0,2,4,6$ and $t=9$, respectively, in terms of $\sin(u/2)$. Spatial and temporal mesh sizes are taken as $h=0.2$ and $\tau=0.01$.}\label{2SG:fig3}
\end{figure}
\begin{figure}[H]
\centering\begin{minipage}[t]{70mm}
\includegraphics[width=70mm]{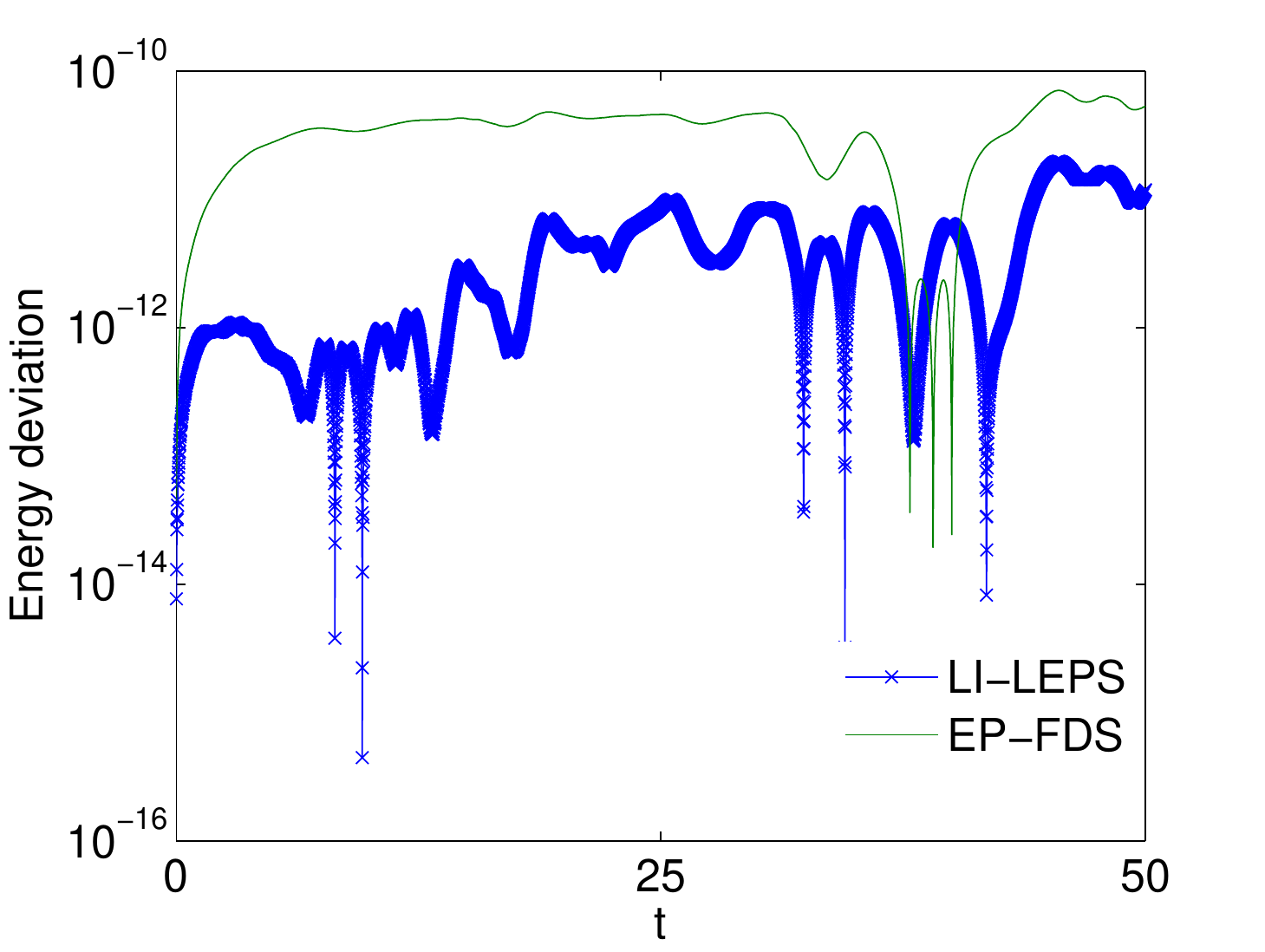}
\end{minipage}
\caption{Energy deviation over the time interval $t\in[0,50]$
with $h=0.2$ and $\tau=0.01$.}\label{2SG:err2}
\end{figure}
\subsubsection{Collisions of four circular solitons}
Finally, the collision of four expanding circular ring solitons is reported by taking initial conditions \cite{Argyris91,KAS00,SKV10}
\begin{align*}
&f(x,y)=4\tan^{-1}\left[\exp\left(\frac{4-\sqrt{(x+3)^{2}+(y+7)^{2}}}{0.436}\right)\right],\ -30\le x,y\le 10,\\
& g(x,y)=\frac{4.13}{\cosh\left(\frac{4-\sqrt{(x+3)^{2}+(y+7)^{2}}}{0.436}\right)},\ -30\le x,y\le 10,
\end{align*}
and the periodic boundary conditions.

The solution shown includes the extension across $x =-10$ and $y =-10$ by symmetry properties of the problem \cite{Argyris91,DPT95,KAS00,SKV10}.
 Fig. \ref{2SG:fig4} shows the collision precisely among four expanding circular ring solitons in which the smaller ring solitons bounding an
annular region emerge into a large ring one. This matches known experimental results \cite{Argyris91,DPT95}. Also,
contour maps are shown to illustrate more clearly the movement of the solitons.
The long time energy deviations of the two schemes are shown in Fig. \ref{2SG:err3}. As is clear, LI-LEPS shows the remarkable advantage in energy preservation over EP-FDS again.

\begin{figure}[H]
\centering\begin{minipage}[t]{60mm}
\includegraphics[width=60mm]{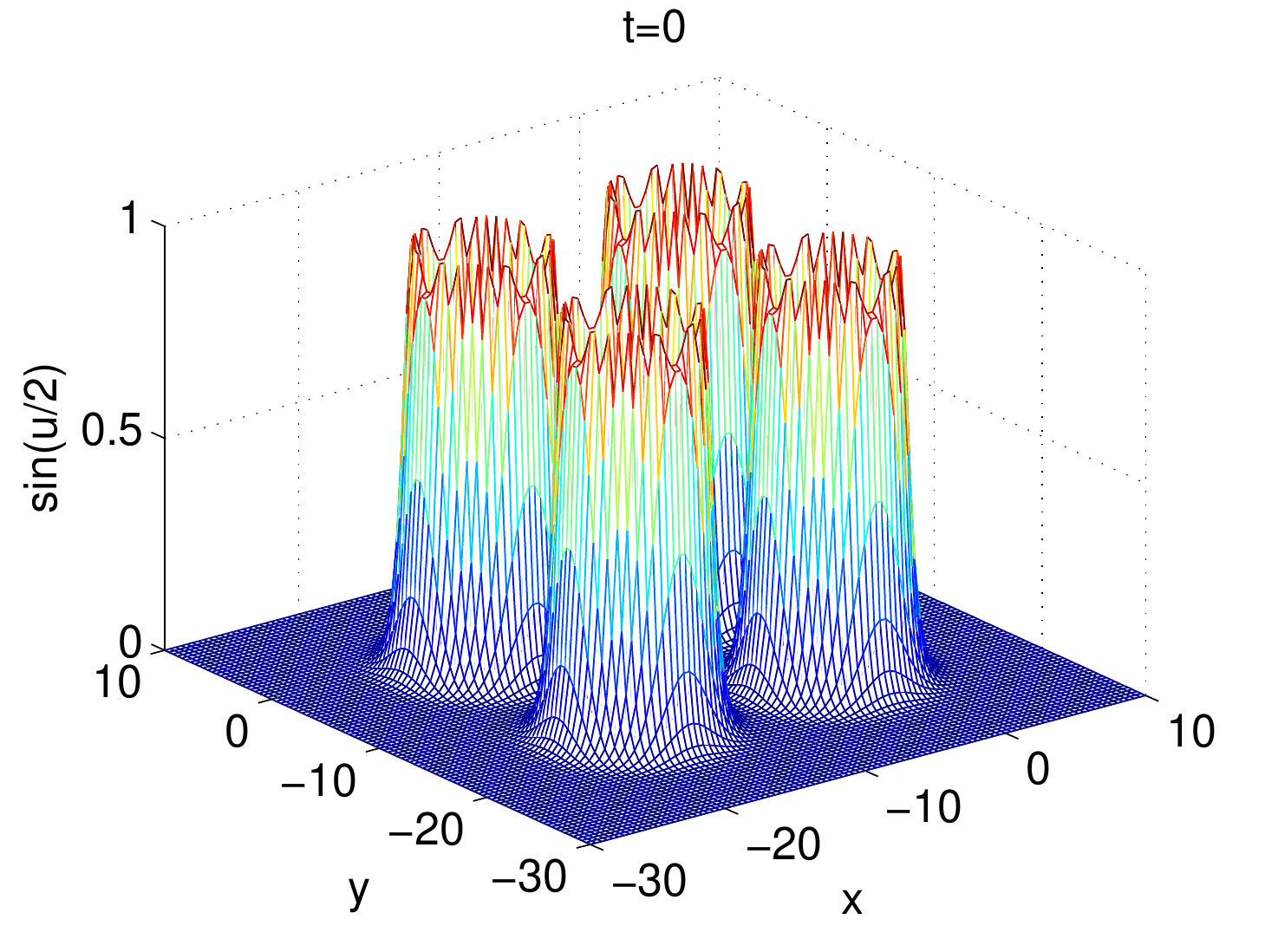}
\end{minipage}
\begin{minipage}[t]{60mm}
\includegraphics[width=60mm]{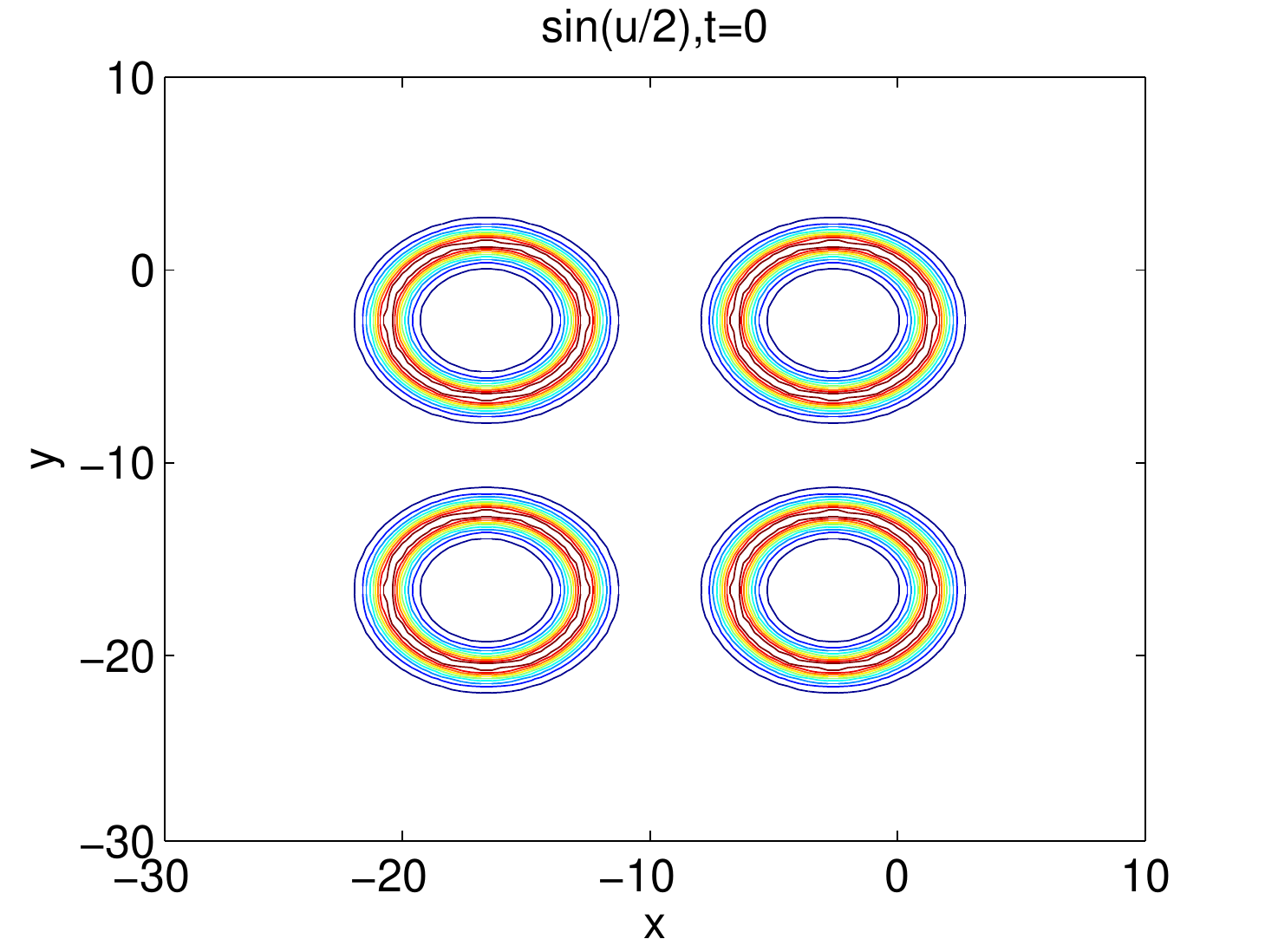}
\end{minipage}
\begin{minipage}[t]{60mm}
\includegraphics[width=60mm]{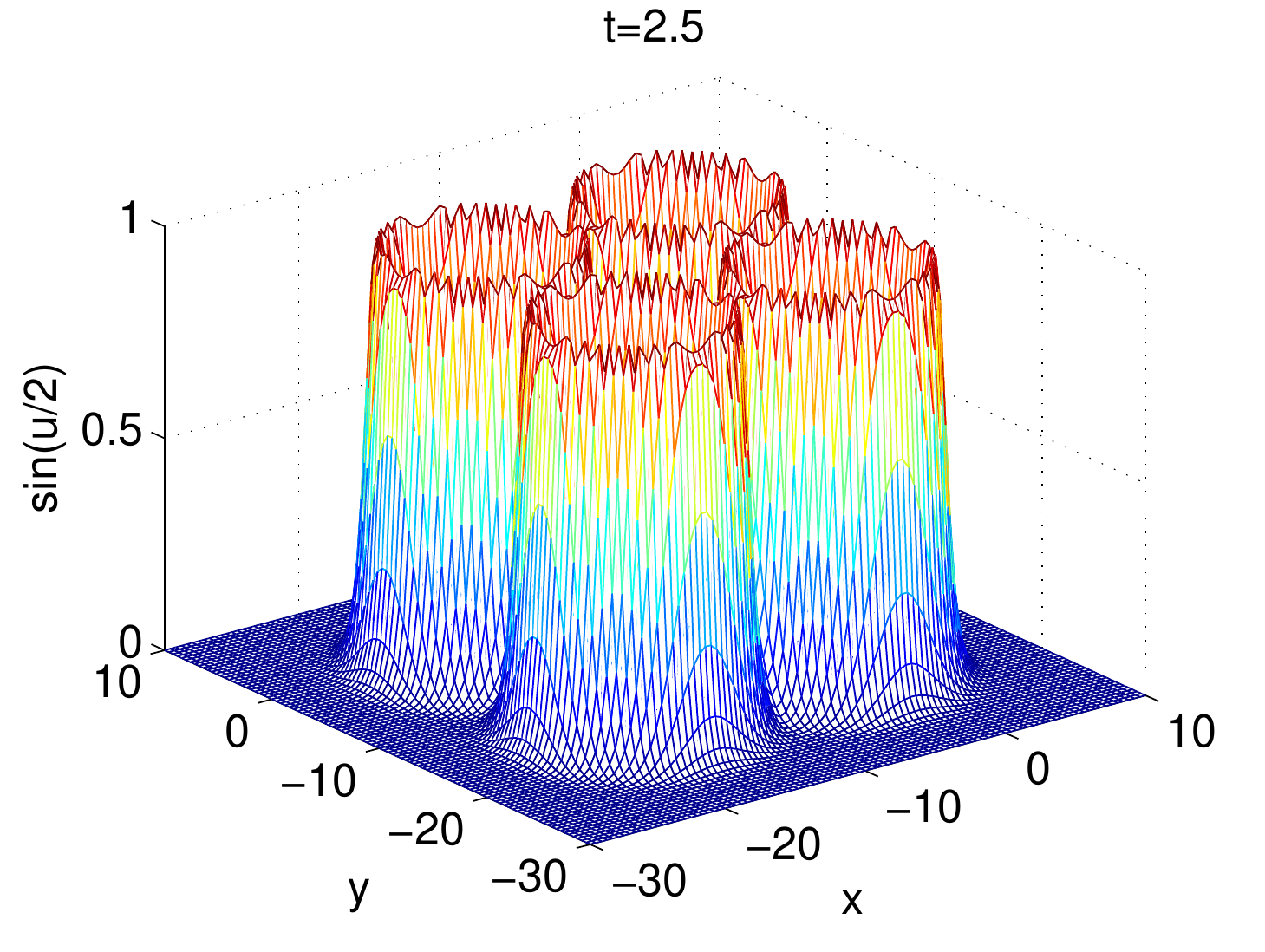}
\end{minipage}
\begin{minipage}[t]{60mm}
\includegraphics[width=60mm]{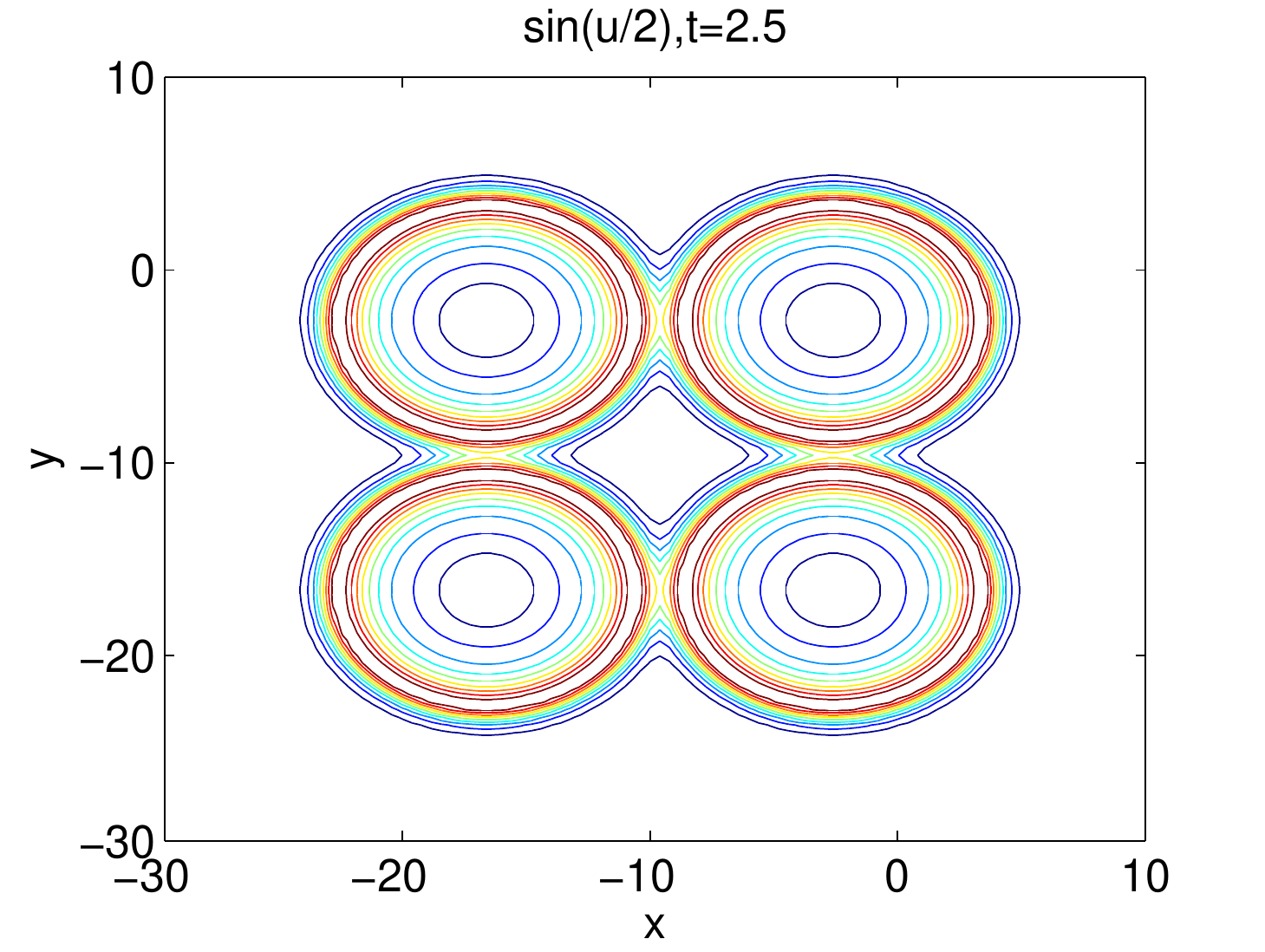}
\end{minipage}
\begin{minipage}[t]{60mm}
\includegraphics[width=60mm]{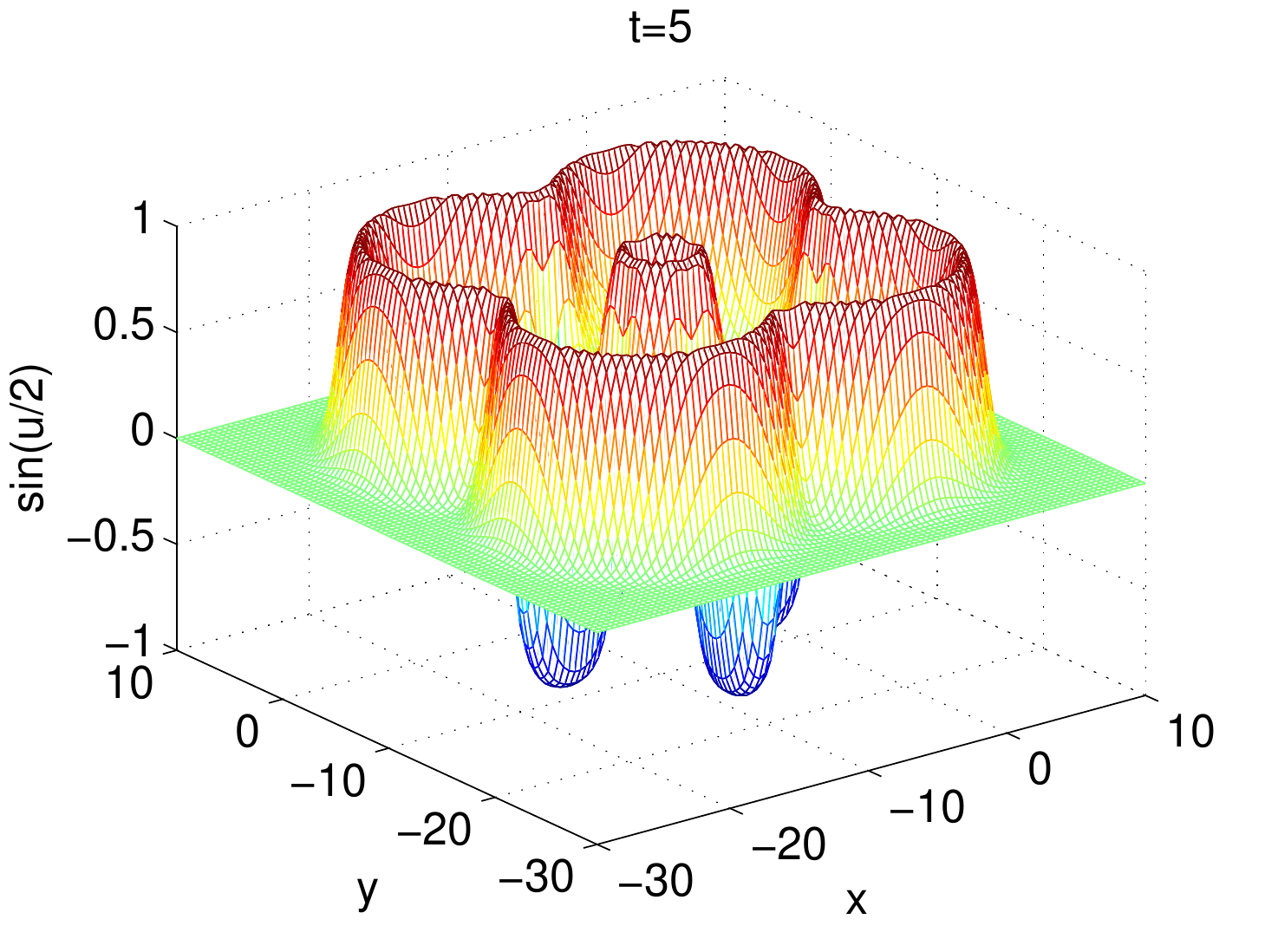}
\end{minipage}
\begin{minipage}[t]{60mm}
\includegraphics[width=60mm]{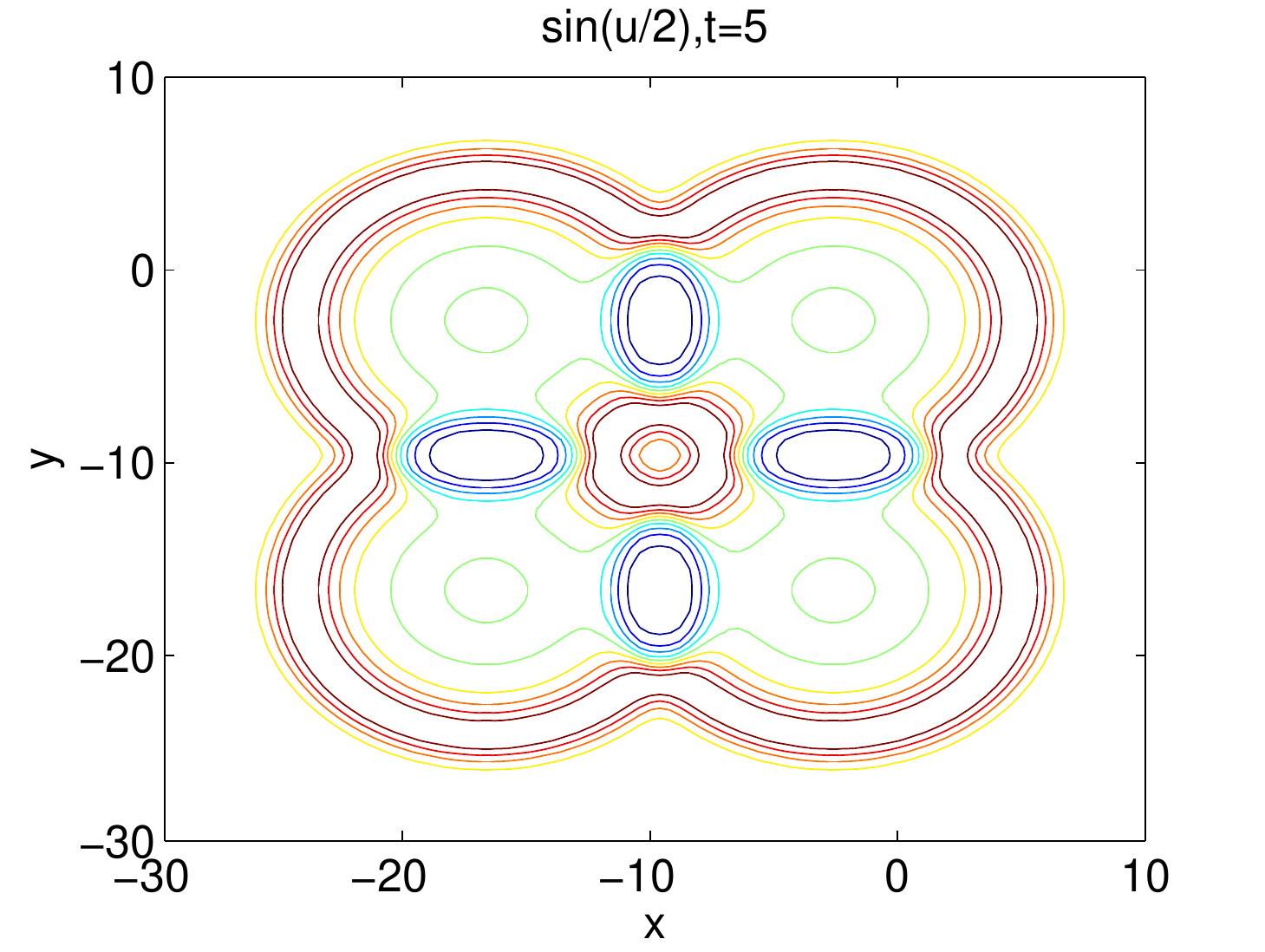}
\end{minipage}
\begin{minipage}[t]{60mm}
\includegraphics[width=60mm]{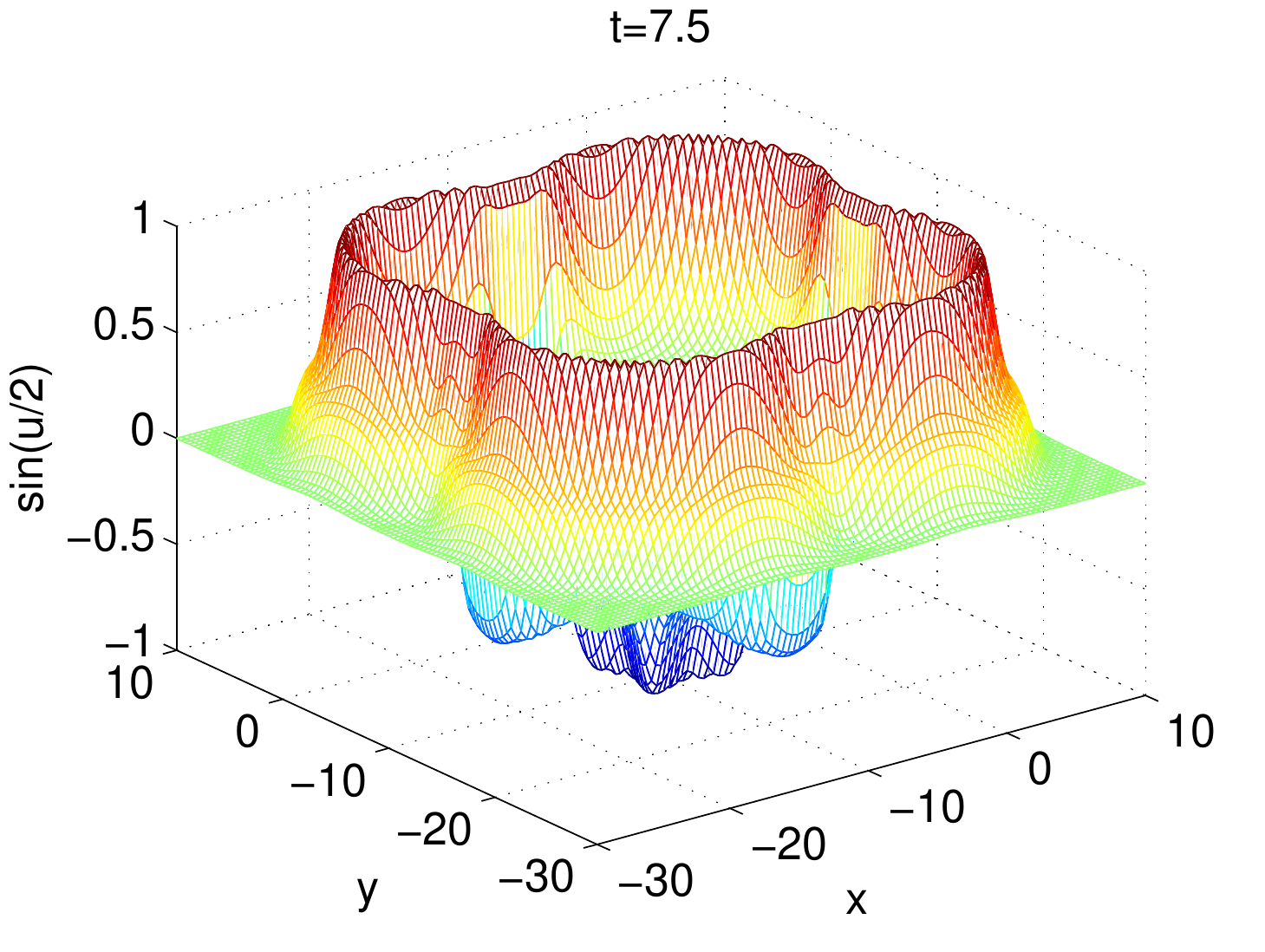}
\end{minipage}
\begin{minipage}[t]{60mm}
\includegraphics[width=60mm]{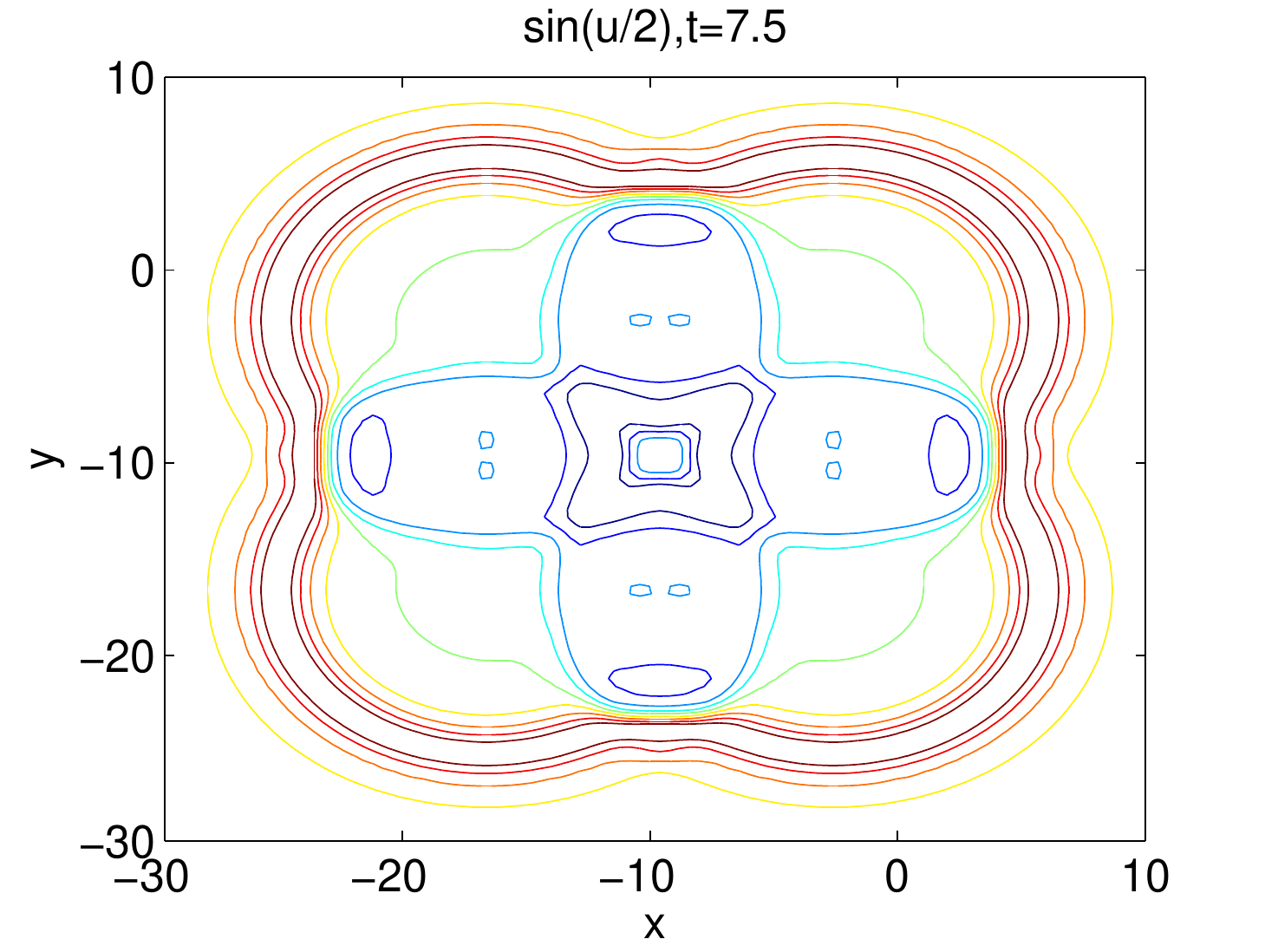}
\end{minipage}
\begin{minipage}[t]{60mm}
\includegraphics[width=60mm]{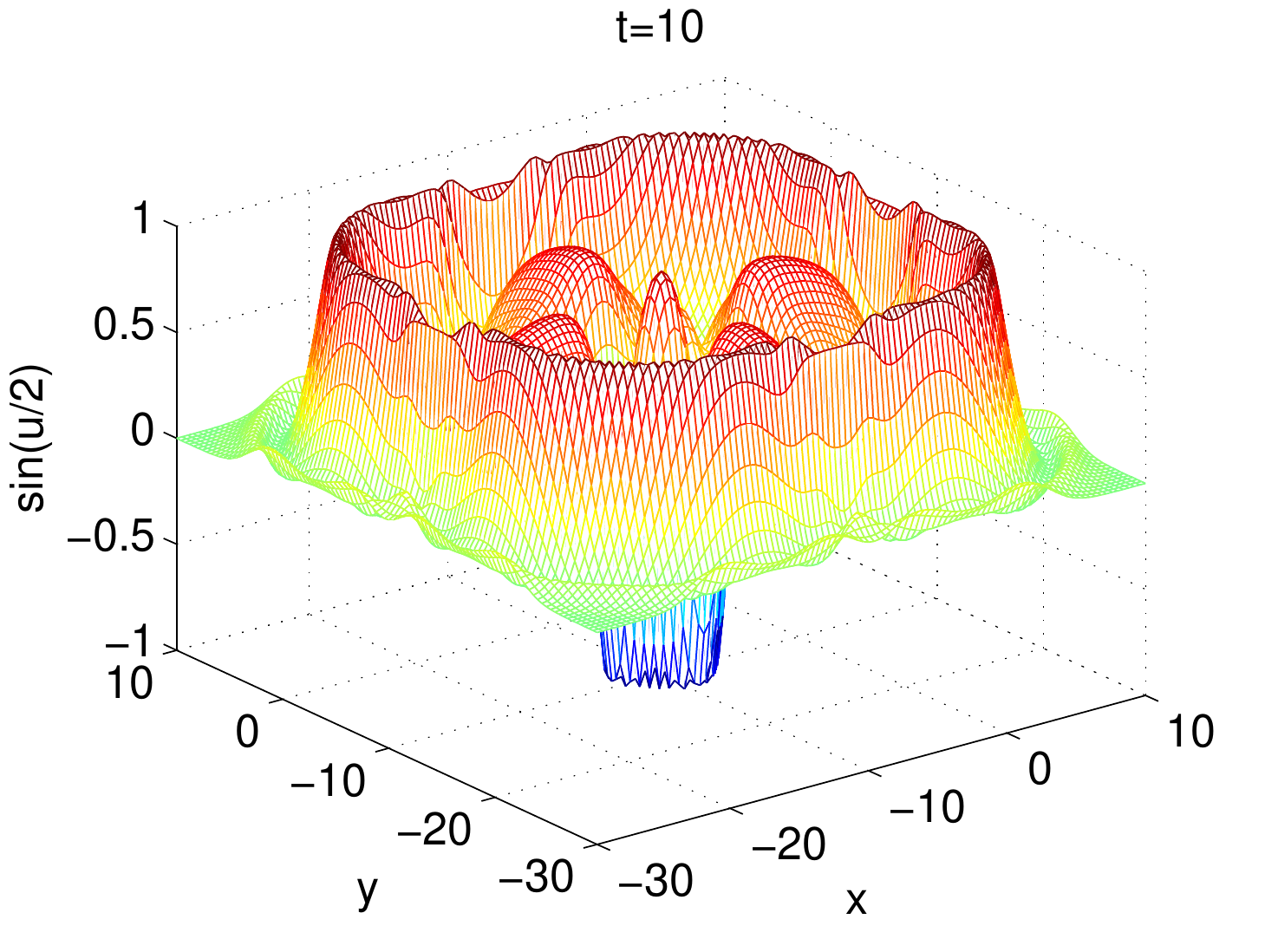}
\end{minipage}
\begin{minipage}[t]{60mm}
\includegraphics[width=60mm]{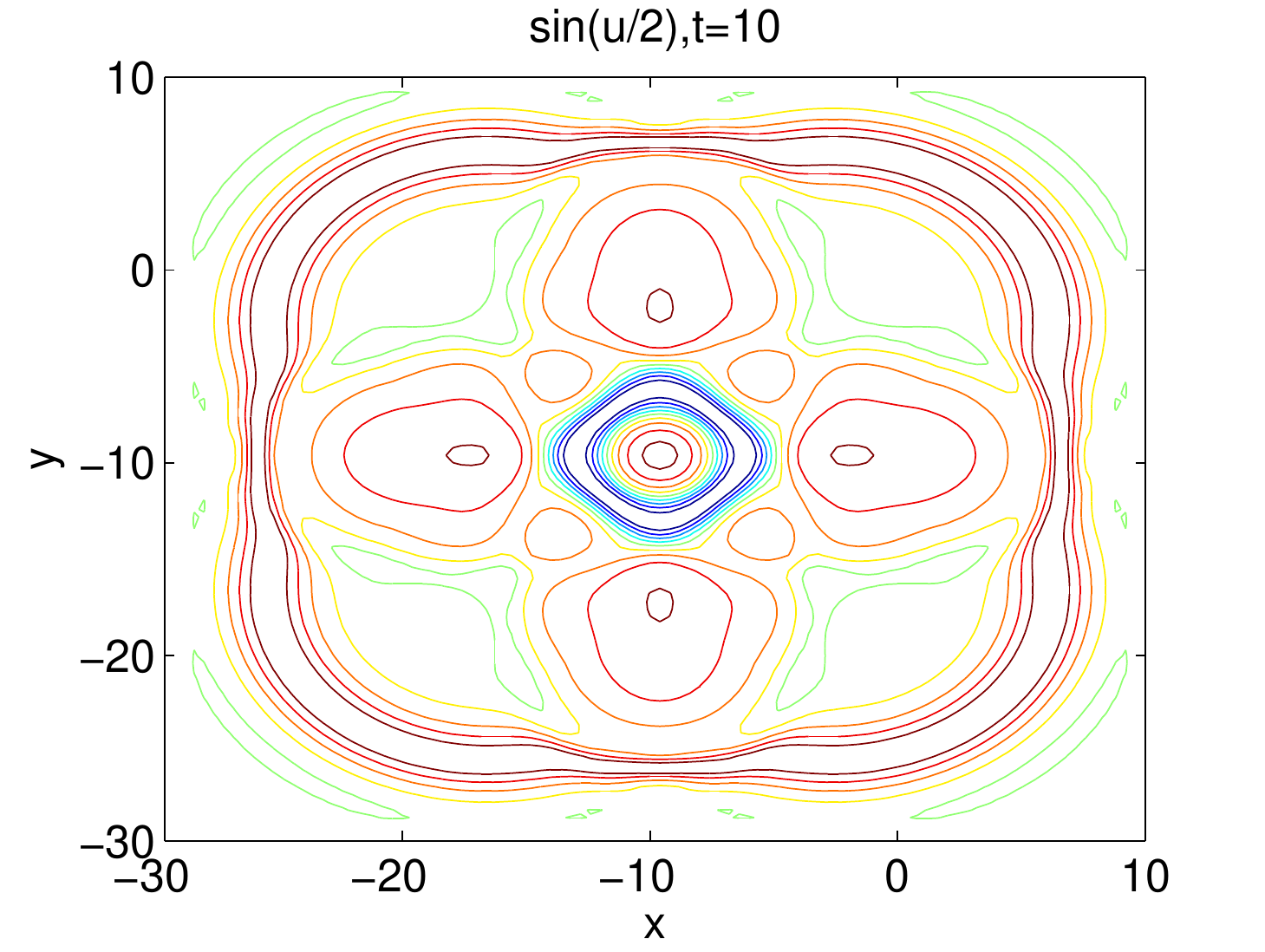}
\end{minipage}
\caption{Collision of four expanding ring solitons: surface and contours plots of initial condition and numerical solutions at times $t=0,2.5,5,7.5,$ and $t=10$, respectively, in terms of $\sin(u/2)$. Spatial and temporal mesh sizes are taken as $h=0.2$ and $\tau=0.01$.}\label{2SG:fig4}
\end{figure}
\begin{figure}[H]
\centering\begin{minipage}[t]{70mm}
\includegraphics[width=70mm]{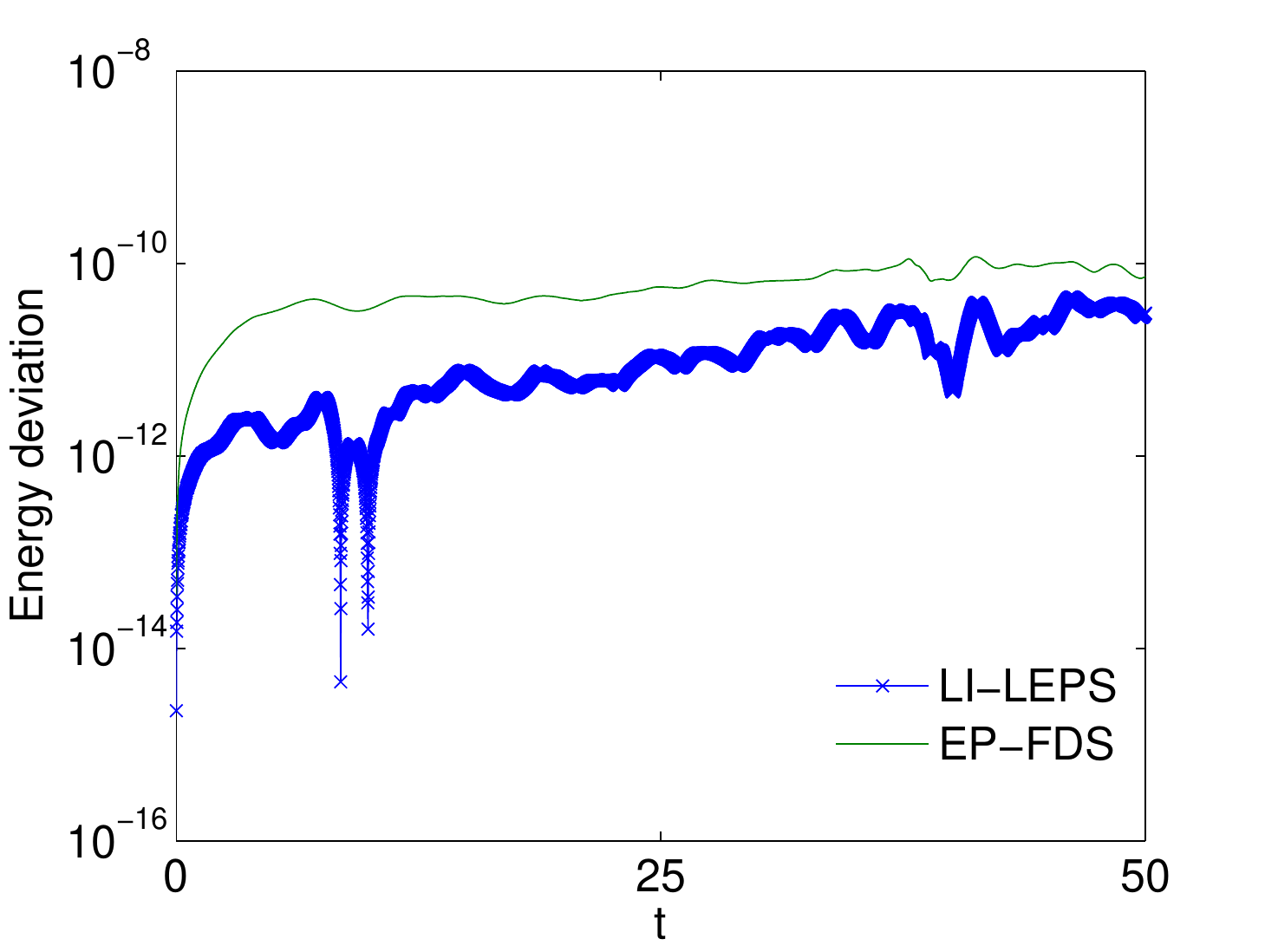}
\end{minipage}
\caption{ Energy deviation over the time interval $t\in[0,50]$
with $h=0.2$ and $\tau=0.01$.}\label{2SG:err3}
\end{figure}

\section{Concluding remarks}\label{2SG:Sec6}
In this paper, we combine the idea of the IEQ approach with the linearly implicit structure-preserving method to develop a novel, linearly implicit scheme for the sine-Gordon equation, which inherits local energy conservation law. Based on the classical energy method, we prove that, without
any restriction on the mesh ratio, the proposed scheme is convergent with order $O(h^2+\tau^2)$  in discrete $H_h^1$-norm. Numerical results verify the theoretical analysis. Compared with some existing local structure-preserving schemes in the literature, the new scheme shows remarkable efficiency and the advantage in energy preservation. The strategy presented in this paper is rather general and useful
so that it can be applied to study a broad class of conserving-systems, such as the nonlinear Schr\"odinger equation, the Gross-Pitaevskii equation, the nonlinear Klein-Gordon equation, etc.

Very recently, Shen et al. have proposed a new approach, termed as scalar auxiliary variable (SAV) approach, for gradient flows. The SAV approach can not only enjoy all the advantages of the IEQ approach, but also has some additional advantages, such as independence of specific forms of the nonlinear part of the free energy, easy to implement, etc \cite{SXY17,SXY18}. Thus, an interesting topic for future studies would be devising linearly implicit and local structure-preserving schemes for conserving-systems by the idea of the SAV approach.

\section*{Acknowledgments}
The authors would like to express sincere gratitude to the referees for their insightful
comments and suggestions. This work is supported by the National Natural Science Foundation of China (Grant Nos. 11771213, 61872422, 11871418),
the National Key Research and Development Project of China (Grant Nos. 2016YFC0600310, 2018YFC0603500, 2018YFC1504205),
the Major Projects of Natural Sciences of University in Jiangsu Province of China (Grant Nos. 15KJA110002, 18KJA110003), the Natural Science Foundation of Jiangsu Province, China (Grant No. BK20171480), the Foundation of Jiangsu Key Laboratory for Numerical Simulation
of Large Scale Complex Systems (201905) and the Yunnan Provincial Department
of Education Science Research Fund Project (2019J0956).



\begin{thebibliography}{10}

\bibitem{AHS97}
M.J. Ablowitz, B.M. Herbst, and C.M. Schober.
\newblock On the numerical solution of the sine-{G}ordon equation.
\newblock {\em J. Comput. Phys.}, 131:354--367, 1997.

\bibitem{Argyris91}
J.~Argyris, M.~Haase, and J.C. Heinrich.
\newblock Finite element approximation to two-dimensional sine-{G}ordon
  solitons.
\newblock {\em Comput. Methods Appl. Mech. Engrg.}, 86:1--26, 1991.

\bibitem{Bratsos07b}
A.G. Bratsos.
\newblock The solution of the two-dimensional sine-{G}ordon equation using the
  method of lines.
\newblock {\em J. Comput. Appl. Math.}, 85:241--252, 2008.

\bibitem{BCI15}
L.~Brugnano, G.~Frasca Caccia, and F.~Iavernaro.
\newblock Energy conservation issues in the numerical solution of the
  semilinear wave equation.
\newblock {\em Appl. Math. Comput.}, 270:842--870, 2015.

\bibitem{BI16}
L.~Brugnano and F.~Iavernaro.
\newblock {\em Line Integral Methods for Conservative Problems}.
\newblock Chapman et Hall/CRC: Boca Raton, FL, USA, 2016.

\bibitem{CWL13}
J.~Cai, Y.~Wang, and H.~Liang.
\newblock Local energy-preserving and momentum-preserving algorithms for
  coupled nonlinear {S}chr\"odinger system.
\newblock {\em J. Comput. Phys.}, 239:30--50, 2013.

\bibitem{CLW18}
W.~Cai, H.~Li, and Y.~Wang.
\newblock Partitioned averaged vector field methods.
\newblock {\em J. Comput. Phys.}, 370:25--42, 2018.

\bibitem{CGM12}
E.~Celledoni, V.~Grimm, R.I. McLachlan, D.I. McLaren, D.~O'Neale, B.~Owren, and
  G.R.W. Quispel.
\newblock Preserving energy resp. dissipation in numerical {PDE}s using the
  ``{A}verage {V}ector {F}ield" method.
\newblock {\em J. Comput. Phys.}, 231:6770--6789, 2012.

\bibitem{CL81}
P.L. Christiansen and P.S. Lomdahl.
\newblock Numerical solution of 2+1 dimensional sine-{G}ordon solitons.
\newblock {\em Phys. D}, 2:482--494, 1981.

\bibitem{DO11}
M.~Dahlby and B.~Owren.
\newblock A general framework for deriving integral preserving numerical
  methods for {PDE}s.
\newblock {\em SIAM J. Sci. Comput.}, 33:2318--2340, 2011.

\bibitem{DG10}
M.~Dehghan and A.~Ghesmati.
\newblock Numerical simulation of two-dimensional sine-{G}ordon solitons via a
  local weak meshless technique based on the radial point interpolation method
  {(RPIM)}.
\newblock {\em Comput. Phys. Commun.}, 181:772--786, 2010.

\bibitem{DS08}
M.~Dehghan and A.~Shokri.
\newblock A numerical method for solution of the two-dimensional sine-{G}ordon
  equation using the radial basis functions.
\newblock {\em Math. Comput. Simulation}, 79:700--715, 2008.

\bibitem{DPT95}
K.~Djidjeli, W.G. Price, and E.H. Twizell.
\newblock Numerical solutions of a damped sine-{G}ordon equation in two space
  variables.
\newblock {\em J. Engrg. Math.}, 29:347--369, 1995.

\bibitem{Furihata01}
D.~Furihata.
\newblock Finite-difference schemes for nonlinear wave equation that inherit
  energy conservation property.
\newblock {\em J. Comput. Appl. Math.}, 134:37--57, 2001.

\bibitem{GCW14b}
Y.~Gong, J.~Cai, and Y.~Wang.
\newblock Some new structure-preserving algorithms for general multi-symplectic
  formulations of {H}amiltonian {PDE}s.
\newblock {\em J. Comput. Phys}, 279:80--102, 2014.

\bibitem{GWW18}
Y.~Gong, Y.~Wang, and Q.~Wang.
\newblock Linear-implicit conservative schemes based on energy quadratization
  for {H}amiltonian {PDE}s.
\newblock {\em Preprint.}

\bibitem{GPRV86}
B.~Guo, P.J. Pascual, M.J. Rodriguez, and L.~V\'azquez.
\newblock Numerical solution of the sine-{G}ordon equation.
\newblock {\em Appl. Math. Comput.}, 18:1--14, 1986.

\bibitem{HJLL07}
J.~Hong, S.~Jiang, C.~Li, and H.~Liu.
\newblock Explicit multi-symplectic methods for {H}amiltonian wave equations.
\newblock {\em Commun. Comput. Phys.}, 2:662--683, 2007.

\bibitem{JSLW17}
C.~Jiang, J.~Sun, H.~Li, and Y.~Wang.
\newblock A fourth-order {AVF} method for the numerical integration of
  sine-{G}ordon equation.
\newblock {\em Appl. Math. Comput.}, 313:144--158, 2017.

\bibitem{JPM12}
R.~Jiwari, S.~Pandit, and R.C. Mittal.
\newblock Numerical simulation of two-dimensional sine-{G}ordon solitons by
  differential quadrature method.
\newblock {\em Comput. Phys. Commun.}, 183:600--616, 2012.

\bibitem{Josephson65}
J.D. Josephson.
\newblock Supercurrents through barries.
\newblock {\em Adv. Phys.}, 14:419--451, 1965.

\bibitem{KFCG17}
X.~Kang, W.~Feng, K.~Cheng, and C.~Guo.
\newblock An efficient finite difference scheme for the 2{D} sine-{G}ordon
  equation.
\newblock {\em arXiv:1706.08632v1}, 2017.

\bibitem{KAS00}
A.Q.M. Khaliq, B.~Abukhodair, and Q.~Sheng.
\newblock A predictor-corrector scheme for the sine-{G}ordon equation.
\newblock {\em Numer. Methods Partial Differ. Eqns.}, 16:133--146, 2000.

\bibitem{LSQ14}
H.~Li, J.~Sun, and M.~Qin.
\newblock New explicit multi-symplectic scheme for nonlinear wave equation.
\newblock {\em Appl. Math. Mech. -Engl. Ed.}, 35:369--380, 2014.

\bibitem{LQ95}
S.~Li and L.~Vu-Quoc.
\newblock Finite difference calculus invariant structure of a class of
  algorithms for the nonlinear {K}lein-{G}ordon equation.
\newblock {\em SIAM. J. Numer. Anal.}, 32:1839--1875, 1995.

\bibitem{LW15}
Y.~Li and X.~Wu.
\newblock General local energy-preserving integrators for solving
  multi-symplectic {H}amiltonian {PDE}s.
\newblock {\em J. Comput. Phys.}, 301:141--166, 2015.

\bibitem{LiuW17}
C.~Liu and X.~Wu.
\newblock Arbitrarily high-order time-stepping schemes based on the operator
  spectrum theory for high-dimensional nonlinear {K}lein-{G}ordon equations.
\newblock {\em J. Comput. Phys.}, 340:243--275, 2017.

\bibitem{MF01}
T.~Matsuo and D.~Furihata.
\newblock Dissipative or conservative finite-difference schemes for
  complex-valued nonlinear partial differential equations.
\newblock {\em J. Comput. Phys.}, 171:425--447, 2001.

\bibitem{McLachlan94}
R.~McLachlan.
\newblock Symplectic integration of {H}amiltonian wave equations.
\newblock {\em Numer. Math.}, 66:465--492, 1994.

\bibitem{Reich00}
S.~Reich.
\newblock Multi-symplectic {R}unge-{K}utta collocation methods for
  {H}amiltonian wave equations.
\newblock {\em J. Comput. Phys.}, 157:473--499, 2000.

\bibitem{SW08}
C.M. Schober and T.H. Wlodarczyk.
\newblock Dispersive properties of multisymplectic integrators.
\newblock {\em J. Comput. Phys.}, 227:5090--5104, 2008.

\bibitem{SXY17}
J.~Shen, J.~Xu, and J.~Yang.
\newblock A new class of efficient and robust energy stable schemes for
  gradient flows.
\newblock {\em arXiv:1710.01331}, 2017.

\bibitem{SXY18}
J.~Shen, J.~Xu, and J.~Yang.
\newblock The scalar auxiliary variable {(SAV)} approach for gradient.
\newblock {\em J. Comput. Phys.}, 353:407--416, 2018.

\bibitem{SKV10}
Q.~Sheng, A.Q.M. Khaliq, and D.A. Voss.
\newblock Numerical simulation of two-dimensional sine-{G}ordon solitons via a
  split cosine scheme.
\newblock {\em Math. Comput. Simulation}, 68:355--373, 2005.

\bibitem{WWJQ03}
Y.~Wang, B.~Wang, Z.~Ji, and M.~Qin.
\newblock High order symplectic schemes for the sine-{G}ordon equation.
\newblock {\em J. Phys. Soc. Jpn.}, 72:2731--2736, 2003.

\bibitem{WWQ08}
Y.~Wang, B.~Wang, and M.~Qin.
\newblock Local structure-preserving algorithms for partial differential
  equations.
\newblock {\em Sci. China Ser. A}, 51:2115--2136, 2008.

\bibitem{YZW17}
X.~Yang, J.~Zhao, and Q.~Wang.
\newblock Numerical approximations for the molecular beam epitaxial growth
  model based on the invariant energy quadratization method.
\newblock {\em J. Comput. Phys.}, 333:104--127, 2017.

\bibitem{YZWS17}
X.~Yang, J.~Zhao, Q.~Wang, and J.~Shen.
\newblock Numerical approximations for a three components {C}ahn-{H}illiard
  phase-field model based on the invariant energy quadratization method.
\newblock {\em Math. Models Methods Appl. Sci.}, 27:1993--2030, 2017.

\bibitem{ZGL95}
F.~Zhang, V.M. P\'erez-Garc\'ia, and L.~V\'azquez.
\newblock Numerical simulation of nonlinear schr\"odinger systems: a new
  conservative scheme.
\newblock {\em Appl. Math. Comput.}, 71:165--177, 1995.

\bibitem{ZV91}
F.~Zhang and L.~V\'azquez.
\newblock Two energy conserving numerical schemes for the sine-{G}ordon
  equation.
\newblock {\em Appl. Math. Comput.}, 45:17--30, 1991.

\bibitem{ZYGW17}
J.~Zhao, X.~Yang, Y.~Gong, and Q.~Wang.
\newblock A novel linear second order unconditionally energy stable scheme for
  a hydrodynamic-tensor model of liquid crystals.
\newblock {\em Comput. Methods Appl. Mech. Engrg.}, 318:803--825, 2017.

\bibitem{zhou90}
Y.~Zhou.
\newblock {\em Applications of Discrete Functional Analysis to the Finite
  Difference Method}.
\newblock International Academic Publishers, Beijing, 1990.

\bibitem{ZTHTW10}
H.~Zhu, L.~Tang, S.~Song, Y.~Tang, and D.~Wang.
\newblock Symplectic wavelet collocation method for {H}amiltonian wave
  equations.
\newblock {\em J. Comput. Phys.}, 229:2550--2572, 2010.

\end{thebibliography}
\end{document}